\documentclass[10pt,reqno]{amsproc}

\usepackage{fullpage}
\usepackage[dvipsnames]{xcolor}
\usepackage[utf8]{inputenc}

\usepackage{
  amsmath, amsthm, amssymb, mathtools, dsfont, units,          
  graphicx, wrapfig, subfig, float,                            
  listings, pythonhighlight,                                  
  fancyhdr,
  hyperref,
  enumerate,
  enumitem,
  framed
}   

\usepackage{bm}

\usepackage{newpxtext, newpxmath, inconsolata}
\usepackage{amsfonts}
\usepackage{pgfplots}
\pgfplotsset{compat=1.12}
\usepackage{tkz-fct}
\usepackage{svg}
\usepackage{tikz}
\usepackage{tikz-cd}
\usepackage{rank-2-roots}
\usepackage{multicol}
\usepackage{multirow}

\hypersetup{colorlinks=true, linkcolor=red, citecolor=blue, filecolor=blue, urlcolor=blue}

\usepackage[bottom]{footmisc}

\allowdisplaybreaks

\usepackage[font={it,footnotesize}]{caption}

\usepackage{titlesec}
\titleformat{\section}{\large\bfseries}{\thesection\;\;\;}{0em}{}
\titleformat{\subsection}{\normalsize\bfseries\selectfont}{\thesubsection\;\;\;}{0em}{}
\titleformat{\subsubsection}{\normalsize\bfseries\selectfont}{\thesubsubsection\;\;\;}{0em}{}

\setlist[itemize]{wide=0pt, leftmargin=16pt, labelwidth=10pt, align=left}

\graphicspath{{Images/}{../Images/}}

\usepackage[mathscr]{euscript}

\newcommand{\what}[1]{\widehat{#1}}

\newcommand{\SL}{\mathrm{SL}}

\newcommand{\QM}{\mathcal{QM}}

\newcommand{\wtilde}{\widetilde}
\newcommand{\dd}{\mathrm{d}}

\newcommand{\rA}{\mathrm{A}}

\newcommand{\bC}{\mathbb{C}}

\newcommand{\bH}{\mathbb{H}}

\newcommand{\bR}{\mathbb{R}}

\newcommand{\bZ}{\mathbb{Z}}

\newcommand{\bx}{\mathbf{x}}
\newcommand{\by}{\mathbf{y}}

\newcommand{\cL}{\mathcal{L}}
\newcommand{\cM}{\mathcal{M}}
\newcommand{\cN}{\mathcal{N}}

\newcommand{\cS}{\mathcal{S}}

\theoremstyle{definition}
\newtheorem{theorem}{Theorem}[section]
\newtheorem{proposition}[theorem]{Proposition}
\newtheorem{lemma}[theorem]{Lemma}
\newtheorem{corollary}[theorem]{Corollary}
\newtheorem{definition}[theorem]{Definition}

\theoremstyle{remark}
\newtheorem{remark}[theorem]{Remark}

\newtheorem*{theorem*}{Theorem}
\newtheorem*{proposition*}{Proposition}
\newtheorem*{lemma*}{Lemma}
\newtheorem*{corollary*}{Corollary}
\newtheorem*{definition*}{Definition}
\newtheorem*{example*}{Example}
\newtheorem*{remark*}{Remark}

\newtheorem*{conjecture*}{Conjecture}

\definecolor{keywordcolor}{rgb}{0.7, 0.1, 0.1}   
\definecolor{tacticcolor}{rgb}{0.0, 0.1, 0.6}    
\definecolor{commentcolor}{rgb}{0.4, 0.4, 0.4}   
\definecolor{symbolcolor}{rgb}{0.0, 0.1, 0.6}    
\definecolor{sortcolor}{rgb}{0.1, 0.5, 0.1}      
\definecolor{attributecolor}{rgb}{0.7, 0.1, 0.1} 
\definecolor{backcolour}{rgb}{0.95,0.95,0.92}
\colorlet{shadecolor}{backcolour}

\BeforeBeginEnvironment{lstlisting}{%
  \begingroup
  \setlength{\fboxsep}{0pt}%
  \setlength{\OuterFrameSep}{0pt}%
  \begin{snugshade}%
}
\AfterEndEnvironment{lstlisting}{%
  \end{snugshade}%
  \endgroup
}

\newenvironment{red}{\relax\color{red}}{\hspace*{.5ex}\relax}
\newenvironment{blue}{\relax\color{blue}}{\hspace*{.5ex}\relax}
\newcommand{\ber}{\begin{red}}
\newcommand{\er}{\end{red}}
\newcommand{\beb}{\begin{blue}}
\newcommand{\eb}{\end{blue}}

\title{Positive quasimodular forms and the sign uncertainty principle}
\author{Seewoo Lee}
\date{}

\begin{document}

\begin{abstract}
    For every positive integer $d$ divisible by $4$, we prove the following new upper bound for the Bourgain--Clozel--Kahane sign uncertainty constant:
    \[
        \rA_+(d) \le \sqrt{2 \left\lfloor \frac{d}{16} \right\rfloor + 2}.
    \]
    It recovers the optimal bound $\rA_+(12) \le \sqrt{2}$ in dimension $12$ and improves the previously best known bound $\sqrt{(d+2)/(2\pi)}$ for all $d \ge 52$ divisible by $4$.
    The proof uses Fourier eigenfunctions and associated quasimodular forms constructed by Feigenbaum, Grabner, and Hardin.
\end{abstract}

\maketitle

\section{Introduction}

An uncertainty principle describes a trade-off between a function and its Fourier transform.
One such formulation is the Bourgain--Clozel--Kahane uncertainty principle for the last sign change \cite{bourgain2010principe}.
Let $f: \bR^d \to \bR$ be a ``nice function'' that is eventually nonnegative, meaning that $f(\bx) \ge 0$ for all sufficiently large $\|\bx\|$.
Let $\what{f}: \bR^d \to \bC$ be the Fourier transform of $f$, given by
\[
\what{f}(\by) = \int_{\bR^d} f(\bx) e^{-2\pi i \langle \bx, \by \rangle} \, \dd \bx.
\]
We consider the class $\mathcal{A}_+(d)$ of functions satisfying the following conditions:
\begin{enumerate}
    \item $f \in L^1(\bR^d)$, $\what{f} \in L^1(\bR^d)$, and $\what{f}$ is real-valued,
    \item $f$ is eventually nonnegative while $\what{f}(\mathbf{0}) \le 0$, and
    \item $\what{f}$ is eventually nonnegative while $f(\mathbf{0}) \le 0$.
\end{enumerate}
Let $r(f)$ be the last-sign-change radius of $f$:
\[
r(f) := \inf \{ r \ge 0 : f(\bx) \ge 0 \text{ whenever } \|\bx\| \ge r \},
\]
and define $r(\what{f})$ similarly.
The uncertainty principle of Bourgain--Clozel--Kahane says \cite[Th\'eor\`eme 3.1]{bourgain2010principe}
\[
\rA_{+}(d) := \inf_{f \in \mathcal{A}_+(d) \setminus \{0\}} \sqrt{r(f) r(\what{f})} > 0.
\]

The natural next question is to determine the optimal constant $\rA_+(d)$, or at least effective lower and upper bounds for it.
Its exact value is known only in dimension $d = 12$, where Cohn and Gon{\c{c}}alves proved that $\rA_+(12) = \sqrt{2}$ \cite{cohn2019optimal}.
They adapted the magic-function constructions underlying the optimal sphere packings in dimensions $8$ and $24$ \cite{viazovska2017sphere,cohn2017sphere}.
In particular, the proof of the upper bound $\rA_+(12) \le \sqrt{2}$ is based on the construction of the optimal function $f$ as a certain integral transform of a modular form.
The best previously known explicit \emph{uniform} upper bound, valid for all $d \ge 2$, is due to Bourgain--Clozel--Kahane \cite{bourgain2010principe}:
\begin{equation}
    \label{eqn:bckupper}
    \rA_+(d) \le \sqrt{\frac{d + 2}{2\pi}}.
\end{equation}
More recently, an internal OpenAI model proved the asymptotic bound \cite[Chapter 1, Theorem 1.2]{openai10}
\begin{equation}
    \label{eqn:Apos_limit}
    \rA_{+}(d) \le \left(\frac{1}{\pi} + o(1)\right) \sqrt{d},
\end{equation}
as $d \to \infty$, by proving that $\lim_{d \to \infty} \rA_{+}(d)/\sqrt{d} = 1/\pi$.

It is natural to ask whether the construction of Cohn--Gon{\c{c}}alves for $d = 12$ can be generalized to other dimensions and thereby yield upper bounds for $\rA_+(d)$ that improve on \eqref{eqn:bckupper}.
Feigenbaum, Grabner, and Hardin \cite{feigenbaum2021eigenfunctions} constructed a family of Fourier eigenfunctions in dimensions $d \equiv 0 \pmod{4}$ that subsumes the previous constructions in \cite{viazovska2017sphere,cohn2017sphere,cohn2019optimal}.
They treated several specific dimensions and proved new upper bounds for $\rA_+(d)$, but their argument does not extend directly to all dimensions because it relies on dimension-specific numerical computations.

\begin{figure}[t]
\centering
\begin{tikzpicture}

\begin{axis}[
    name=mainP,
    width=15cm,
    height=7.5cm,
    xlabel={$d$},
    ylabel={$\rA_+(d)$},
    xmin=1, xmax=10000,
    ymin=0, ymax=42,
    grid=both,
    tick label style={font=\small},
    label style={font=\small},
    scaled x ticks=false,
    every axis plot/.append style={line cap=round},
]

\addplot[thick, cyan!60, domain=1:10000, samples=80]
  {sqrt((x+2)/(2*pi))};
\addplot[blue!90!black, only marks, mark=triangle*, mark size=0.9pt, samples at={4,24,...,9984}]
  {sqrt(2*floor(x/16)+2)};

\addplot[thick, dotted, gray, domain=1:10000, samples=200]
  {sqrt(x) / pi};

\addplot[draw=black, fill=yellow!25, fill opacity=0.35, thick]
  coordinates {(1,0) (100,0) (100,6) (1,6) (1,0)};
\coordinate (zoomTRP) at (axis cs:100,6);
\coordinate (zoomBRP) at (axis cs:100,0);

\end{axis}

\begin{axis}[
    name=insetP,
    at={(mainP.south east)},
    anchor=south east,
    xshift=-0.3cm,
    yshift=0.5cm,
    width=5.8cm,
    height=3.8cm,
    xmin=1, xmax=100,
    ymin=0, ymax=6,
    grid=both,
    tick label style={font=\scriptsize},
    label style={font=\scriptsize},
    axis background/.style={fill=white},
    every axis plot/.append style={line cap=round},
]

\addplot[thick, cyan!60, domain=1:100, samples=100]
  {sqrt((x+2)/(2*pi))};
\addplot[teal!90, only marks, mark=*, mark size=0.5pt, line width=0.55pt]
  table[x=d,y=Aplus] {src/cohn_goncalves_aplus.dat};
\addplot[blue!90!black, only marks, mark=triangle*, mark size=0.8pt, samples at={4,8,...,100}]
  {sqrt(2*floor(x/16)+2)};

\end{axis}

\draw[black, thin] (zoomTRP) -- (insetP.north west);
\draw[black, thin] (zoomBRP) -- (insetP.south west);

\path (mainP.south west) -- coordinate[midway] (mainPcenter) (mainP.south east);
\matrix[matrix of nodes, anchor=north, row sep=2mm, column sep=6mm,
        nodes={font=\scriptsize, anchor=west}] at ([yshift=-1.2cm]mainPcenter) {
  \tikz{\draw[thick, cyan!60] (0,0)--(0.45,0);} & Bourgain--Clozel--Kahane \eqref{eqn:bckupper} &
  \tikz{\fill[blue!90!black] (0.225,0) +(-1.5pt,0) -- +(1.5pt,0) -- +(0,2.5pt) -- cycle;} & New upper bound \eqref{eqn:main} \\
  \tikz{\draw[thick, dotted, gray] (0,0)--(0.45,0);} & Asymptotic scale from \eqref{eqn:Apos_limit} &
  \tikz{\fill[teal!90] (0.225,0) circle (0.8pt);} & Cohn--Gon{\c{c}}alves numerical bounds \cite[Table 2]{cohn2019optimal} \\
};

\end{tikzpicture}
\caption{Upper bounds on $\rA_+(d)$, with a zoomed view for $d \le 100$ in the inset. The teal dots are the numerical polynomial--Gaussian bounds of Cohn--Gon{\c{c}}alves for $1\le d\le32$ \cite[Table 2]{cohn2019optimal}. The new bound proved in Theorem~\ref{thm:main} (blue triangles) is sampled at every multiple of $4$ in the inset; the main plot shows representative samples.
The dotted curve shows the asymptotic scale in \eqref{eqn:Apos_limit}, rather than a finite-dimensional bound.}
\label{fig:bounds_Aplus}
\end{figure}

In this paper, we prove that the quasimodular forms associated with these eigenfunctions are positive in every dimension $d \equiv 0 \pmod{4}$. This yields the following new upper bound for $\rA_+(d)$.

\begin{theorem}
\label{thm:main}
For each positive integer $d$ divisible by $4$, we have
\begin{equation}
    \label{eqn:main}
    \rA_+(d) \le \sqrt{2 \left\lfloor \frac{d}{16} \right\rfloor + 2}.
\end{equation}
Moreover, the inequality is strict unless $d = 12$.
\end{theorem}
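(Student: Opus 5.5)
We build an explicit $f\in\mathcal{A}_+(d)$, a Fourier eigenfunction with eigenvalue $+1$ (so $\what f=f$), whose sign changes are controlled by a quasimodular form. The construction is the Feigenbaum--Grabner--Hardin one. Write $d=4k$ and set
\[
f(\bx)=\sin^2\!\left(\tfrac{\pi\|\bx\|^2}{2}\right)\int_0^{\infty}\phi\!\left(\tfrac{i}{t}\right)t^{\,2-d/2+w}\,e^{-\pi\|\bx\|^2 t}\,\dd t ,
\]
or the appropriate variant of this formula. Here $\phi$ is a weakly holomorphic quasimodular form of suitable weight and depth for $\SL_2(\bZ)$. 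Its behaviour under $z\mapsto -1/z$ makes the Laplace-type integral an eigenfunction of the Fourier transform. The prefactor $\sin^2(\pi r^2/2)$ vanishes to second order at $r^2\in 2\bZ_{\ge1}$, and it is what removes the poles coming from the principal part of $\phi$ at the cusp.

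We take $\phi$ with the fewest allowed poles, so the prefactor cancels them for $r^2>2m$ with $m=\lfloor d/16\rfloor$. For $r^2>2m$ the integral then converges on all of $(0,\infty)$. If in addition $\phi(it)\ge0$ for all $t>0$, the integrand is nonnegative and $f(\bx)\ge0$. We therefore get $r(f)=r(\what f)\le\sqrt{2m+2}$, since the first zero at which $f$ can change sign is $r^2=2m+2$, just after the poles have been handled. At $\bx=\mathbf 0$ the analytic continuation gives $f(\mathbf 0)\le 0$, and this is enforced by the sign of the leading principal-part coefficient of $\phi$. The integrability conditions follow from the standard estimates in Feigenbaum--Grabner--Hardin.

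The main obstacle, and the real content, is proving that $\phi(it)>0$ for every $t>0$ and every $k$, uniformly in the dimension rather than numerically for each one. My plan has three steps.

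\begin{enumerate}
\item Express $\phi$ explicitly in terms of $E_2,E_4,E_6,\Delta$. The natural guess is a form like
\[
\phi=\frac{(E_2E_4-E_6)\,\cdot(\text{polynomial in }E_4,E_6)}{\Delta^{m}},
\]
arranged so that positivity reduces to positivity of finitely many building blocks on the imaginary axis. Useful facts here are that $E_2E_4-E_6=720\sum n\sigma_3(n)q^n$ has positive coefficients and that $\Delta>0$ on $i\bR_{>0}$.
\item Prove positivity by splitting into $t\ge1$ and $t\le1$. For $t\ge1$, the $q$-expansion with $q=e^{-2\pi t}\le e^{-2\pi}$ is dominated by its leading terms. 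For $t\le1$, use the modular transformation $t\mapsto1/t$. This introduces $E_2(i/t)=t^2E_2(it)+6t/\pi$, so $\phi(i/t)$ becomes a polynomial in $t$ whose coefficients are forms evaluated at $it$ with $t\ge1$. Each coefficient is then handled by the $q$-expansion again.
\item Make the argument uniform in $k$. I would look for a recursion in $k$, for example multiplication by a positive form such as $E_4/\Delta^{1/3}$-type factors or $E_4^3/\Delta$ when passing from $d$ to $d+16$, which explains the period $16$ in $\lfloor d/16\rfloor$. Positivity would then propagate from finitely many base cases $d\in\{4,8,12,16\}$.
\end{enumerate}

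For strictness, it suffices to show $f$ is strictly positive slightly beyond the radius, or that the bound can be perturbed. When $d\ne12$ the form $\phi$ is not extremal: either $\phi$ has strictly positive slack, or $f$ vanishes to higher order at $r^2=2m+2$ without changing sign. Adding a small multiple of another admissible eigenfunction then pushes the last sign change below $\sqrt{2m+2}$. At $d=12$ this fails because the bound matches Cohn--Gonçalves's $\rA_+(12)=\sqrt2$.
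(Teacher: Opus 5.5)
Your framework matches the paper: Feigenbaum--Grabner--Hardin eigenfunctions, positivity on the imaginary axis giving $f\ge 0$ for $\|\bx\|^2>2n$ with $n=\lfloor d/16\rfloor+1$, then a sign condition at the origin. But each of the three substantive steps has a genuine gap.

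\textbf{The family for $d\equiv 4\pmod 8$.} A level-one quasimodular $\phi$ only produces eigenfunctions with eigenvalue $(-1)^{d/4}$, which is $-1$ when $d\equiv 4\pmod 8$. For those dimensions, including $d=12$ whose optimal function is built from $\Theta_2,\Theta_4$, you need the other family $G_w$. It has level $\Gamma(2)$, contains the non-modular term $\Delta\,\cL_S$ with $\cL_S=\log(1-\lambda)$, and needs its own positivity argument.

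\textbf{The positivity plan.} It would not go through as sketched.
\begin{itemize}
\item The relevant forms have depth $2$, e.g.\ $F_8=(E_2^2E_4-2E_2E_6+E_4^2)/1728$, so they are not of the shape $(E_2E_4-E_6)\cdot P(E_4,E_6)$.
\item After $t\mapsto 1/t$ you get $t^wF_w(it)-\frac{6t^{w-1}}{\pi}\widetilde F_{w-2}(it)+\frac{36t^{w-2}}{\pi^2}C_{w-4}(it)$. The middle term has the opposite sign, so termwise $q$-expansion bounds do not settle it, and nothing there is uniform in $w$.
\item There is no multiplicative recursion $d\mapsto d+16$. Consecutive forms are related by $F_{w+4}\propto \frac{(w-6)(w-5)}{36}E_4F_w-\partial_{w-2}^2F_w$, which does not visibly preserve positivity.
\end{itemize}
The missing ideas are these.
\begin{itemize}
\item The three-term identity $F_{w+2}=c_w(E_4F_{w-2}-E_6F_{w-4})$ with $c_w>0$ gives positivity on $0<t\le 1$, because $E_6(it)\le 0$ there.
\item The identity $F_{w+2}=\frac{2(w-3)}{3(w-4)}E_4X_{w-2,2}+\frac{w-6}{3(w-4)}E_6X_{w-4,2}$ expresses $F_{w+2}$ through depth-$2$ extremal forms. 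These are positive for $t>1$ by Nakaya's ${}_3F_2$ expression with positive coefficients, and for $t\le1$ by an analogous three-term identity.
\item The relation $\partial_{w-2}F_w=\frac{w-5}{6}F_{w+2}$ transfers positivity from $F_{w+2}$ to $F_w$, since $F_w/\Delta^{(w-2)/12}$ is then decreasing in $t$.
\end{itemize}
For $d\equiv4\pmod 8$ the same scheme needs a new level-$2$ companion family $Y_w$, with base cases settled by explicit inequalities for $\log(1+x)$.

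\textbf{The sign at the origin.} It is not governed by the leading principal-part coefficient of $\phi$. Only the term $-\frac{6t}{\pi}\widetilde F_{w-2}(it)/\Delta(it)^{n}$ of the integrand contributes at $\bx=\mathbf 0$, giving $M_{d,+}(\mathbf 0)=-\frac{6}{\pi}b_{0,+}$. Here $b_{0,+}$ is the constant term of $\widetilde F_{w-2}/\Delta^n$, with $\widetilde F_{w-2}=\partial F_w/\partial E_2$, so $b_{0,+}=\sum_{j=1}^{n}p_{24n}(n-j)\,\tilde a_{j,+}^{(w-2)}$. You therefore need positivity of the first $n$ Fourier coefficients of $\widetilde F_{w-2}$. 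For $d\equiv 4\pmod 8$ you need nonnegativity of the first coefficients of the $\Delta\cL_S$-coefficient $\widetilde G_{w-12}$. The paper proves both by a separate induction using explicit coefficient formulas for Kaneko--Zagier operators; your proposal simply asserts it.

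\textbf{Strictness.} This comes from the same place. A strict inequality $M(\mathbf 0)<0$ makes $f=M-M(\mathbf 0)e^{-\pi\|\bx\|^2}$ satisfy $\what f=f$ and $f(\mathbf 0)=0$. It is also strictly positive for $\|\bx\|\ge\sqrt{2n}-c$, because $M$ has only a simple zero at $\sqrt{2n}$ and is nonnegative beyond it. Your ``non-extremal slack'' argument identifies no such mechanism. At $d=12$ it is exactly $\widetilde G_{-4}=0$, hence $M(\mathbf 0)=0$, that makes the bound non-strict.

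\textbf{Minor point.} With $m=\lfloor d/16\rfloor$, the integral converges only for $\|\bx\|^2>2(m+1)$, not for $\|\bx\|^2>2m$ as your proposal states.
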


For $d$ divisible by $8$, the main idea is to relate the quasimodular-form family corresponding to the Fourier eigenvalue $(-1)^{d/4}$ to the \emph{extremal quasimodular forms} of Kaneko and Koike \cite{kaneko2006extremal}, which are conjectured to have positive Fourier coefficients.
We prove new recurrence relations and identities that reduce the required positivity to that of the extremal quasimodular forms of depth $2$.
The latter positivity follows from new recurrence relations and Nakaya's hypergeometric identity \cite{nakaya2024determination}; Proposition \ref{prop:ext2poseig} gives the identities relating the Feigenbaum--Grabner--Hardin forms to these extremal forms.
When $d \equiv 4 \pmod{8}$, we define an analogous family of ``modular forms of level $2$'' satisfying similar identities with the family corresponding to the Fourier eigenvalue $(-1)^{d/4+1}$.
The resulting argument recovers the optimal bound $\rA_+(12) \le \sqrt{2}$ when $d = 12$.

\subsection*{Acknowledgements}
This work is part of the author's PhD thesis.
The author thanks Paata Ivanisvili and Sug Woo Shin for helpful discussions and comments.

\subsection*{Disclosure of AI usage}
The proofs of Lemma \ref{lem:intertwine}, Proposition \ref{prop:Ftilde_pos}, and Proposition \ref{prop:Gtilde_pos_boundary} were developed with assistance from \texttt{ChatGPT-5.6 Sol}.
To obtain Lemma \ref{lem:intertwine}, the author asked the model to determine the general conditions on the parameters under which the intertwining relation \eqref{eqn:intertwine} holds; this relation is used in Propositions \ref{prop:Yproperties} and \ref{prop:Gtilde_ode}.
For Proposition \ref{prop:Ftilde_pos}, the author supplied the definitions and recurrence relations from Theorem \ref{thm:fghposeig} and asked the model to prove nonnegativity of coefficient $\tilde{a}_{n,+}^{(w-2)}$ of $\wtilde{F}_{w-2}$ when $1 \le n \le \frac{w}{4} - 2$.
For Proposition \ref{prop:Gtilde_pos_boundary}, the author supplied the proof that the $n$-th Fourier coefficient of $\wtilde{G}_{w}$ is nonnegative for every $0 \le n \le \frac{w}{4}$ (Proposition \ref{prop:Gtilde_pos}) and asked the model to handle the boundary case $n = \frac{w}{4} + 1$.

The paper was proofread and revised with assistance from AI tools, including \texttt{ChatGPT-5.6 Sol} and \texttt{Claude Opus 5 / Fable 5}.
In particular, \texttt{Fable 5} suggested a simplification of the base-case argument in the proof of Theorem \ref{thm:Ywpos}.
Figure \ref{fig:bounds_Aplus} was generated with assistance from \texttt{ChatGPT}.

AI tools, primarily \texttt{Claude Opus 5 / Fable 5}, also assisted in writing the Lean and Sage code.
Several results in this paper were formalized in Lean 4 \cite{moura2021lean}: the results on Kaneko--Zagier operators (Lemmas \ref{lem:KZ2_coeff}, \ref{lem:KZ3_coeff}, and \ref{lem:intertwine}) and the coefficient-positivity results for $\wtilde{F}_{w-2}$ and $\wtilde{G}_w$ (Propositions \ref{prop:Ftilde_pos}, \ref{prop:Gtilde_pos}, and \ref{prop:Gtilde_pos_boundary}). See Appendix \ref{subsec:lean} for details.
The AI tools also assisted in writing Sage \cite{sagemath} code used to perform computational checks of the recurrence relations and identities for the quasimodular forms $F_w$, $\wtilde{F}_{w-2}$, $G_w$, $Y_w$, and $\wtilde{G}_w$.
See Appendix \ref{subsec:sage} for details.
No other results in this paper were obtained with AI assistance.

\section{Preliminaries}

\subsection{Quasimodular forms}

For any function $f: \mathbb{H} \to \mathbb{C}$ and integer $k$, we define the weight-$k$ slash action of $\gamma \in \mathrm{SL}_2(\mathbb{Z}) = \Gamma(1)$ by
\[
    (f|_{k} \gamma)(z) := (cz + d)^{-k}f \left(\frac{az + b}{cz + d}\right), \quad \gamma = \begin{bmatrix} a & b \\ c & d \end{bmatrix} \in \mathrm{SL}_{2}(\mathbb{Z}).
\]
We denote by $T = \left[\begin{smallmatrix} 1 & 1 \\ 0 & 1\end{smallmatrix}\right]$ and $S = \left[\begin{smallmatrix} 0 & -1 \\ 1 & 0\end{smallmatrix}\right]$ the standard generators of $\mathrm{SL}_{2}(\mathbb{Z})$.
Let $q = e^{2\pi i z}$ for $z \in \mathbb{H}$, the complex upper half plane.
For $n \ge 1$, let $\sigma_k(n):= \sum_{d \mid n} d^k$.
Define the Eisenstein series of weights $2$, $4$, and $6$ by
\begin{align}
    E_2 &= 1 - 24 \sum_{n\geq 1}\sigma_1(n)q^n, \label{eqn:e2fourier} \\
    E_4 &= 1 + 240 \sum_{n \geq 1} \sigma_3(n)q^n, \label{eqn:e4fourier} \\
    E_6 &= 1 - 504 \sum_{n \geq 1} \sigma_5(n)q^n. \label{eqn:e6fourier}
\end{align}
$E_4$ and $E_6$ are genuine modular forms, whereas $E_2$ is a \emph{quasi}modular form of weight $2$ and level $1$.
These series obey the following transformation laws:
\begin{align*}
    (E_{2}|_{2}S)(z) = z^{-2}E_2\left(-\frac{1}{z} \right) &= E_2(z) - \frac{6 i}{\pi z},\\
    (E_{4}|_{4}S)(z) = z^{-4}E_4\left(-\frac{1}{z} \right) &= E_4(z), \\
    (E_{6}|_{6}S)(z) = z^{-6}E_6\left(-\frac{1}{z} \right) &= E_6(z).
\end{align*}
The (graded) ring of quasimodular forms is isomorphic to a polynomial ring in three variables with generators $E_2$, $E_4$, and $E_6$ \cite{bruinier2008elliptic}, and we define the \emph{depth} of a quasimodular form as the highest degree of $E_2$ in its expression as a polynomial in $E_2$, $E_4$, and $E_6$.
This ring is closed under differentiation:
\begin{equation}
    \label{eqn:D_def}
    DF = F' := \frac{1}{2\pi i} \frac{\dd F}{\dd z}, \qquad \sum_{n} a_n q^n \longmapsto \sum_{n} na_n q^n,
\end{equation}
and differentiation increases the weight by $2$ and the depth by at most $1$.
For the Eisenstein series, we have Ramanujan's identities \cite{bruinier2008elliptic}
\begin{equation}
    \label{eqn:ramanujan}
    E_2' = \frac{E_2^2 - E_4}{12}, \quad
    E_4' = \frac{E_2E_4 - E_6}{3}, \quad
    E_6' = \frac{E_2 E_6 - E_4^2}{2}.
\end{equation}
We write $\QM_{w}^{s} = \QM_{w}^{s}(\Gamma(1))$ for the space of quasimodular forms of weight $w$ and depth at most $s$, and $\cM_{w} := \QM_{w}^{0}$ for the space of genuine modular forms of weight $w$.
We denote by $\Delta := (E_4^3 - E_6^2) / 1728$ the discriminant form, which is the unique normalized cusp form of weight $12$ on $\Gamma(1)$.
It can be expressed as an infinite product
\[
\Delta(z) = q \prod_{n \ge 1} (1 - q^n)^{24} = \eta(z)^{24},
\]
where $\eta(z) := q^{1/24} \prod_{n \ge 1} (1 - q^n)$ is the Dedekind eta function.

\subsection{Jacobi's theta functions and the modular $\lambda$-function}

Jacobi's theta functions are defined as
\begin{align*}
    \Theta_2(z) &= \sum_{n \in \mathbb{Z}} q^{\frac{1}{2}(n + \frac{1}{2})^2}, \\
    \Theta_3(z) &= \sum_{n \in \mathbb{Z}} q^{\frac{n^2}{2}}, \\
    \Theta_4(z) &= \sum_{n \in \mathbb{Z}} (-1)^{n}q^{ \frac{n^2}{2}}.
\end{align*}
These are weight $1/2$ modular forms of level $\Gamma(2)$.
Although the definition of half-integral weight modular forms is subtle (see, for example, \cite{shimura1973modular}), we only use the fourth powers of these forms, which are modular forms of weight $2$ and level $\Gamma(2)$.
We will denote them as $H_2$, $H_3$, and $H_4$, which admit Fourier expansions
\begin{align*}
    H_2(z) &= \Theta_2^4(z) = 2 \sum_{n \geq 0} r_4(2n + 1) q^{n + \frac{1}{2}}, \\
    H_3(z) &= \Theta_3^4(z) = 1 + \sum_{n \geq 1}  r_4(n)q^{\frac{n}{2}}, \\
    H_4(z) &= \Theta_4^4(z) = 1 + \sum_{n \geq 1} (-1)^{n} r_4(n)q^{\frac{n}{2}},
\end{align*}
where $r_4(k) := \# \{ \mathbf{x} \in \mathbb{Z}^{4}: \|\mathbf{x}\|^2 = k\}$.
Their transformations under $\mathrm{SL}_2(\mathbb{Z})$ are
\begin{align*}
    (H_{2}|_{2}T)(z) = -H_{2}(z), \quad (H_{3}|_{2}T)(z) &= H_{4}(z), \quad (H_{4}|_{2}T)(z) = H_{3}(z), \\
    (H_{2}|_{2}S)(z) = -H_{4}(z), \quad (H_{3}|_{2}S)(z) &= -H_{3}(z), \quad (H_{4}|_{2}S)(z) = -H_{2}(z).
\end{align*}
Also, we have the Jacobi identity $H_{3} = H_{2} + H_{4}$.
These functions are related to the Eisenstein series and the discriminant form as
\begin{align}
    E_4 &= \frac{1}{2}(H_{2}^{2} + H_{3}^{2} + H_{4}^{2}) = H_{2}^{2} + H_{2}H_{4} + H_{4}^{2}, \label{eqn:e4theta} \\
    E_6 &= \frac{1}{2} (H_{2} + H_{3})(H_{3} + H_{4}) (H_{4} - H_{2}), \label{eqn:e6theta} \\
    \Delta &= \frac{1}{256} (H_{2}H_{3}H_{4})^2. \label{eqn:disctheta}
\end{align}

The modular $\lambda$-function
\begin{equation}
    \label{eqn:lambda_def}
    \lambda(z) = \frac{H_{2}(z)}{H_{3}(z)} = \frac{H_{2}(z)}{H_{2}(z) + H_{4}(z)},
\end{equation}
is a Hauptmodul for $\Gamma(2)$ and satisfies the transformation laws
\[
\lambda(z+1) = \frac{\lambda(z)}{\lambda(z) - 1}, \quad \lambda\left( - \frac{1}{z} \right) = 1 - \lambda(z).
\]
Write
\begin{equation}
    \label{eqn:lambdaS_def}
    \lambda_S(z) := \lambda\left(-\frac{1}{z}\right) = 1 - \lambda(z).
\end{equation}
Since $\lambda$ and $\lambda_S$ are nonvanishing on the simply connected domain $\mathbb{H}$, they admit holomorphic logarithms there; we denote by $\cL(z)$ and $\cL_S(z)$ the branches determined by the expansions
\begin{align}
    \cL(z) &= \pi i z + 4 \log 2 + \sum_{k \ge 1} (-1)^k \frac{r_4(k)}{k} q^{\frac{k}{2}}, \label{eqn:L_qexp} \\
    \cL_S(z) &= -16 \sum_{k \geq 0} \frac{\sigma_1(2k + 1)}{2k + 1}q^{k + \frac{1}{2}}. \label{eqn:LS_qexp}
\end{align}

\subsection{Serre derivative}

For an integer $k$, the Serre derivative $\partial_{k}$ is defined by
\[
    \partial_{k}F := F' - \frac{k}{12} E_2 F,
\]
for any quasimodular form $F$.
$\partial_{k}F$ is \emph{a priori} a quasimodular form of weight $w + 2$ and depth at most $s+1$ when $F \in \QM_w^s$. However, Kaneko and Koike \cite[Proposition 3.3]{kaneko2006extremal} proved that $\partial_{w-s}$ preserves the space of quasimodular forms of depth at most $s$.
The Serre derivative is equivariant under the $\mathrm{SL}_2(\mathbb{Z})$-action in the sense that
\[
\partial_k (F|_k \gamma) = (\partial_k F)|_{k+2}\gamma, \quad \forall \gamma \in \mathrm{SL}_2(\mathbb{Z}).
\]
The Serre derivative satisfies the product rule
\[
\partial_{w_1 + w_2} (FG) = (\partial_{w_1}F)G + F(\partial_{w_2}G).
\]
We denote the $r$-fold Serre derivative by
\[
\partial_k^r F := \partial_{k + 2(r-1)} \partial_{k + 2(r-2)} \cdots \partial_{k} F, \quad \partial_k^0 F := F.
\]
The Ramanujan identities \eqref{eqn:ramanujan} can be written as
\begin{equation}
    \label{eqn:ramanujan_serre}
    \partial_1 E_2 = - \frac{1}{12} E_4, \quad \partial_4 E_4 = -\frac{1}{3} E_6, \quad \partial_6 E_6 = - \frac{1}{2} E_4^2.
\end{equation}

\subsection{Sign uncertainty principle}

Bourgain--Clozel--Kahane's sign uncertainty principle has been studied by several authors.
In dimension $d=1$, the best currently known lower and upper bounds are
\[
0.45 \le \rA_+(1) \le 0.5671.
\]
The lower bound was proved using rearrangement inequalities motivated by optimal transport \cite[Theorem 1]{gonccalves2017hermite}.
The upper bound was established by Google DeepMind's autonomous agent \texttt{AlphaEvolve} \cite[Appendix B.4]{alphaevolve}, improving the earlier bounds of $0.59355$ from \cite[Theorem 1]{gonccalves2017hermite} and $0.572990$ from the numerical construction of Cohn--Gon{\c{c}}alves \cite[Table 2]{cohn2019optimal}.
All the upper bounds were obtained by optimizing over functions of the form (polynomial) $\times$ (Gaussian).

Gon{\c{c}}alves, Oliveira e Silva, and Steinerberger proved that the infimum defining $\rA_+(d)$ is attained by a nonzero self-Fourier function \cite{gonccalves2017hermite}. Consequently, $\rA_+(d)$ is the minimum of $r(g)$ over all nonzero $g \in L^{1}(\mathbb{R}^{d})$ such that $\what{g} = g$, $g(0) = 0$, and $g$ is eventually nonnegative \cite[Problem 1.1]{cohn2019optimal}.
The exact value of $\rA_+(d)$ is unknown in general, but Cohn and Gon{\c{c}}alves found the exact value when $d = 12$.
\begin{theorem}[{Cohn--Gon{\c{c}}alves \cite[Theorem 1.2]{cohn2019optimal}}]
$\rA_+(12) = \sqrt{2}$.
\end{theorem}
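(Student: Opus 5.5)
This is the Cohn--Gon\c{c}alves theorem, which has two halves: the lower bound $\rA_+(12)\ge\sqrt{2}$ and the upper bound $\rA_+(12)\le\sqrt{2}$. I would prove them by separate arguments, following the magic-function philosophy of \cite{viazovska2017sphere,cohn2017sphere}.

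\textbf{Upper bound.} By the reduction of \cite{gonccalves2017hermite}, it suffices to exhibit a nonzero radial $g\in L^1(\bR^{12})$ with $\what g=g$, $g(0)=0$, and $g(\bx)\ge 0$ for $\|\bx\|\ge\sqrt2$. I will construct $g$ as a Laplace-type transform of a modular form. Concretely, set
\[
g(\bx)=\sin^2\!\Big(\frac{\pi\|\bx\|^2}{2}\Big)\int_0^\infty \varphi\Big(\frac{i}{t}\Big)t^{4}e^{-\pi\|\bx\|^2 t}\,\dd t,
\]
where $\varphi$ is a weakly holomorphic (quasi)modular form of suitable weight for $\Gamma(2)$ or $\Gamma(1)$. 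The kernel $t^{4}$ is adjusted so that the Gaussian $e^{-\pi r^2 t}$ in dimension $12$ has Fourier transform $t^{-6}e^{-\pi r^2/t}$.

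The key steps are as follows.
\begin{enumerate}
\item Choose $\varphi$, built from $H_2,H_3,H_4$ or $E_2,E_4,E_6,\Delta$, so that the transformation under $S$ makes $g$ a $+1$ eigenfunction. As in Viazovska's proof, the contour is rotated and the $S$-transformation laws above are used.
\item Check that the double zeros of $\sin^2$ at $\|\bx\|^2\in 2\bZ$ are cancelled by poles of the integral only at $\|\bx\|^2=0$. This gives $g(0)=0$, while $g$ keeps a sign change exactly at $\|\bx\|^2=2$.
\item Prove that $\varphi(i/t)t^4>0$ on $(0,\infty)$. This makes the integral positive for $\|\bx\|^2>2$, where it converges, so $g\ge0$ there.
\end{enumerate}

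\textbf{Lower bound.} Suppose $g$ is self-dual, $g(0)=0$, and $g\ge0$ for $\|\bx\|\ge r$ with $r<\sqrt2$. I will apply Poisson summation over the even unimodular-like structure relevant in dimension $12$, namely the lattice $D_{12}^+$ or a rescaled $\bZ^{12}$ (via the theta series $\Theta_3^{12}$). Taking a lattice $\Lambda$ whose nonzero vectors all have norm at least $\sqrt2$ and with $\Lambda^*$ similar gives
\[
\sum_{\Lambda}g=\mathrm{covol}^{-1}\sum_{\Lambda^*}\what g .
\]
Both sides reduce to sums of nonnegative terms plus $g(0)=0$. A more refined version uses a radial interpolation formula at the points $\sqrt{2n}$; this forces $g$ to vanish there and then yields a contradiction unless $g\equiv0$.

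\textbf{Main obstacle.} The hardest step is the positivity of the kernel $\varphi(i/t)t^4$ for all $t>0$. My first attempt would be to write it via $q$-expansions with nonnegative coefficients near $t\to\infty$ and $t\to0$, using the $S$-transformation. The compact middle range would then be handled by explicit error bounds on truncated expansions.
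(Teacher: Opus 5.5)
Your upper-bound plan is the right framework and matches what the paper does. In this framework the level-one construction gives $(-1)^{d/4}=-1$ eigenfunctions when $d=12$, so you need the $\Gamma(2)$ option. With $w_{12,-}=8$ and $n_{12,-}=1$, the paper's $M_{12,-}$ is, up to a positive constant, the Cohn--Gon\c{c}alves function with kernel
\[
\frac{t^{-8}G_8(i/t)}{\Delta(it)}=\frac{\Theta_4(it)^{12}\bigl(2\Theta_2(it)^4+\Theta_4(it)^4\bigr)}{2^{13}\,\Delta(it)}.
\]
This is your $\varphi=G_8/\Delta$, of weight $-4$. Once $\varphi$ is named, your ``main obstacle'' disappears: $\Theta_2(it)$, $\Theta_4(it)$ and $\Delta(it)$ are all positive, so no numerics are needed.

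What does need checking is the pole structure, and your step 2 gets it wrong. The kernel expands as $q^{-1}+0\cdot q^{-1/2}-264+O(q^{1/2})$.
\begin{itemize}
\item The $q^{-1}$ term gives a simple pole at $\|\bx\|^2=2$. This is exactly what turns the double zero of $\sin^2$ into a sign change at $\sqrt2$. If the only pole were at $0$, as you claim, there would be no sign change there.
\item The vanishing $q^{-1/2}$ coefficient prevents a genuine pole at $\|\bx\|^2=1$, where $\sin^2$ does not vanish.
\item The constant term gives only a simple pole at $0$, so $g(\mathbf 0)=0$. In the paper's notation, $\wtilde G_{-4}=0$ forces $b_{0,-}=0$.
\end{itemize}

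The lower bound has a genuine gap. Lattice Poisson summation, $\sum_{\Lambda}g=\operatorname{covol}(\Lambda)^{-1}\sum_{\Lambda^*}\what g$, puts the same sign on both sides. For $\what g=g$ and a self-dual $\Lambda$ such as $D_{12}^+$ it is a tautology. In general, as you observe yourself, it reads ``$0+$ nonnegative $=0+$ nonnegative''. That forces nothing, in particular not $g(\sqrt{2n})=0$, and no interpolation formula can supply the missing sign information. Lattice summation is the tool for $\rA_-$ (as with $E_8$ and Leech), not $\rA_+$.

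The missing ingredient is a summation formula in which the Fourier side enters with a minus sign; in dimension $12$ this comes from $E_6$. We have $E_6(-1/z)=z^6E_6(z)$, and the Fourier transform of $e^{\pi i z\|\bx\|^2}$ on $\bR^{12}$ carries the factor $(i/z)^6$, with $i^6=-1$. Hence, for radial Schwartz $f$ (and, modulo the usual regularity reductions, for the class in question),
\[
f(\mathbf 0)+\what f(\mathbf 0)=504\sum_{n\ge1}\sigma_5(n)\bigl(f(\sqrt{2n})+\what f(\sqrt{2n})\bigr).
\]
Let $f\in\mathcal A_+(12)$ be radial and rescaled so that $r(f)=r(\what f)<\sqrt2$. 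Then the left side is $\le0$ and the right side is $\ge0$, so every term vanishes. Apply this to $f(\lambda\,\cdot)$ for all $\lambda$ near $1$. It shows $f$ vanishes on annuli around the radii $\sqrt{2n}$, and these annuli eventually overlap. So $f$ and $\what f$ are compactly supported, hence $f=0$. This is the $E_6$ summation argument the paper cites for $\rA_+(12)\ge\sqrt2$.
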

The lower bound $\rA_+(12) \ge \sqrt{2}$ follows from the Poisson-like summation formula associated with the Eisenstein series $E_6$.
The upper bound $\rA_+(12) \le \sqrt{2}$ is based on the explicit construction of an optimal function. For $\|\bx\|>\sqrt{2}$, this function is given by
\[
f(\bx) = \sin^2\left(\frac{\pi\|\bx\|^2}{2}\right) \int_{0}^{\infty} \frac{\Theta_4(it)^{12}(2\Theta_2(it)^4 + \Theta_4(it)^4)}{\Delta(it)} e^{-\pi \|\bx\|^2 t} \dd t,
\]
where $\Theta_2$ and $\Theta_4$ are Jacobi theta functions:
\begin{equation}
    \label{eqn:jacobi_theta}
    \Theta_2(z) = \sum_{n \in \bZ} e^{\pi i (n + \frac{1}{2})^2 z}, \quad \Theta_4(z) = \sum_{n \in \bZ} (-1)^{n} e^{\pi i n^2 z}.
\end{equation}
Replacing the condition $\what{g} = g$ by $\what{g} = -g$ in the characterization above defines an analogous quantity $\rA_-(d)$, and Cohn and Gon{\c{c}}alves also studied both constants numerically.
In particular, they considered functions of the form $p(2\pi \|\bx\|^2) e^{-\pi \|\bx\|^2}$, where $p$ is a polynomial, and used a Laguerre basis to impose the Fourier-eigenfunction condition.
This construction gives better bounds than \eqref{eqn:bckupper} for dimensions $d \le 32$ (Figure \ref{fig:bounds_Aplus}). However, when the degree of $p$ is sublinear in $d$, the fundamental limit of this construction has the same order of growth as \eqref{eqn:bckupper} \cite{cohn2024sign}.
Both $\rA_+(d)$ and $\rA_-(d)$ grow on the order of $\sqrt{d}$.
Cohn--Gon{\c{c}}alves \cite[Conjecture 1.5]{cohn2019optimal} and Afkhami-Jeddi--Cohn--Hartman--de Laat--Tajdini \cite[(3.5)]{afkhami2020high} conjectured that $\lim_{d \to \infty} \rA_{\pm}(d)/\sqrt{d} = 1/\pi$. This was recently proved by an internal OpenAI model \cite[Chapter 1, Theorem 1.2]{openai10}, and in particular yields \eqref{eqn:Apos_limit}.
However, the proof is not effective and the $o(1)$ term is not explicit.

\subsection{Modular linear differential operators and equations}

Modular linear differential operators (MLDOs) are differential operators that preserve modularity.
More precisely, a differential operator $L$ is an MLDO of weight $K$ and type $(k,k+K)$ on $\Gamma$ if it is a finite-order linear differential operator with holomorphic coefficients such that
\[
    L(f|_k \gamma) = (Lf)|_{k+K} \gamma \quad \text{for all } \gamma \in \Gamma.
\]
For example, the Serre derivative $\partial_k$ is an MLDO of weight $2$ and type $(k,k+2)$ on $\Gamma = \SL_2(\bZ)$.
Another important example comes from the study of supersingular $j$-invariants of elliptic curves \cite{kaneko1998supersingular}. In that setting, solutions of the modular linear differential equation (MLDE)
\[
    L_{2,k} F_k := \left(D^2 - \frac{k+1}{6} E_2 D + \frac{k(k+1)}{12}E_2'\right) F_k = \left(\partial_k^2 - \frac{k(k+2)}{144}E_4\right) F_k = 0,
\]
are related to the reduction modulo $p$ of the supersingular polynomial $ss_p(X)$ for a prime $p$.
In general, for $\alpha \in \bR$, the operator
\begin{equation}
    \label{eqn:KZ2_def}
    L_{2,k}^{\alpha} := L_{2,k} + \alpha E_4 = D^2 - \frac{k+1}{6} E_2 D + \frac{k(k+1)}{12}E_2' + \alpha E_4 = \partial_k^2 - \left(\frac{k(k+2)}{144} - \alpha\right) E_4,
\end{equation}
is an MLDO of weight $4$ and type $(k,k+4)$ on $\SL_2(\bZ)$.
When $k \equiv 0 \pmod{6}$, the depth 1, weight $k$ extremal quasimodular form $X_{k,1}$ satisfies the modular linear differential equation $L_{2,k-1} X_{k,1} = 0$ \cite{kaneko2006extremal,grabner2020quasimodular}.
Kaneko, Nagatomo, and Sakai \cite{kaneko2017third} studied the third-order analogue of the Kaneko--Zagier operator, which is an MLDO of weight $6$ and type $(k,k+6)$ on $\SL_2(\bZ)$ of the form
\begin{align}
    L_{3,k}^{(\alpha,\beta)} &:= D^3 - \frac{k+2}{4} E_2 D^2 + \left(\frac{(k+1)(k+2)}{4}E_2' + \alpha E_4\right) D - \left(\frac{k(k+1)(k+2)}{24}E_2'' + \frac{k\alpha}{4} E_4' - \beta E_6\right) \label{eqn:KZ3_def} \\
    &= \partial_k^3 + \left(\alpha - \frac{3k^2 + 12k + 8}{144}\right) E_4 \partial_k + \left(\beta + \frac{k\alpha}{12} - \frac{k^2(k+3)}{864}\right) E_6, \label{eqn:KZ3_def_serre}
\end{align}
where $\alpha, \beta \in \bR$ are parameters.
We will write $L_{3,k} := L_{3,k}^{(0,0)}$.
Nagatomo, Sakai, and Zagier showed that all MLDOs can be expressed in terms of (generalized) Rankin--Cohen brackets and Serre derivatives \cite{nagatomo2024modular}.

The following lemma gives a formula for the Fourier coefficients of the second-order Kaneko--Zagier operator applied to a quasimodular form.
\begin{lemma}
    \label{lem:KZ2_coeff}
    Let $G = \sum_{n \ge 0} a_n q^n$ be a quasimodular form, and let $n \ge 0$.
    For $k \in \bZ$ and $\alpha \in \bR$, the $n$-th Fourier coefficient of $L_{2,k}^{\alpha} G$ is
    \begin{equation}
        \label{eqn:KZ2_coeff}
        [q^n] (L_{2,k}^{\alpha} G) = \kappa_{2,k}^{\alpha}(n) a_n + \sum_{j=0}^{n-1} K_{2,k}^{\alpha}(n,j) a_j,
    \end{equation}
    where
    \begin{align}
        \kappa_{2,k}^{\alpha}(n) &= n^2 - \frac{k+1}{6}n + \alpha, \label{eqn:kappa2} \\
        K_{2,k}^{\alpha}(n,j) &= 2(k+1)(2j-k(n-j)) \sigma_1(n-j) + 240 \alpha \sigma_3(n-j). \label{eqn:K2}
    \end{align}
\end{lemma}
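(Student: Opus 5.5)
The proof is a direct coefficient comparison. I would expand the defining expression \eqref{eqn:KZ2_def} term by term, using the $q$-expansions \eqref{eqn:e2fourier} and \eqref{eqn:e4fourier} and the action \eqref{eqn:D_def} of $D$ on $q$-series. I would use the form $L_{2,k}^{\alpha} = D^2 - \frac{k+1}{6}E_2 D + \frac{k(k+1)}{12}E_2' + \alpha E_4$ rather than the Serre-derivative form, since every term is then a product of a known $q$-series with $G$ or a derivative of $G$. Each product is a Cauchy product, with the coefficient $a_j$ of $G$ paired with the coefficient of $q^{n-j}$ of the other factor. I would separate the diagonal contribution $j=n$, which comes from the constant terms of the Eisenstein series, from the contributions with $0\le j\le n-1$.

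\textbf{The four terms.}
\begin{enumerate}
\item[(i)] $[q^n]D^2G = n^2 a_n$. This is purely diagonal.
\item[(ii)] Since $DG=\sum_j j a_j q^j$ and $E_2 = 1-24\sum_{m\ge1}\sigma_1(m)q^m$,
\[
[q^n]\Bigl(-\tfrac{k+1}{6}E_2DG\Bigr) = -\tfrac{k+1}{6}\,n a_n + 4(k+1)\sum_{j=0}^{n-1} j\,\sigma_1(n-j)\,a_j .
\]
\item[(iii)] By \eqref{eqn:D_def}, $E_2' = -24\sum_{m\ge1} m\sigma_1(m)q^m$ has no constant term. Hence
\[
[q^n]\Bigl(\tfrac{k(k+1)}{12}E_2'G\Bigr) = -2k(k+1)\sum_{j=0}^{n-1}(n-j)\,\sigma_1(n-j)\,a_j,
\]
with no diagonal contribution.
\item[(iv)] $[q^n](\alpha E_4 G) = \alpha a_n + 240\alpha\sum_{j=0}^{n-1}\sigma_3(n-j)\,a_j$.
\end{enumerate}

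\textbf{Collecting terms.} The diagonal parts add up to $n^2 - \frac{k+1}{6}n + \alpha = \kappa_{2,k}^{\alpha}(n)$. For each $j<n$, the $\sigma_1(n-j)$ parts combine as
\[
4(k+1)j - 2k(k+1)(n-j) = 2(k+1)\bigl(2j - k(n-j)\bigr),
\]
and adding the $\sigma_3$ term from (iv) gives exactly $K_{2,k}^{\alpha}(n,j)$. This matches \eqref{eqn:kappa2} and \eqref{eqn:K2}.

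No step is a real obstacle; the only care needed is bookkeeping. One must remember that $E_2'$ has zero constant term, so it contributes nothing to $\kappa$. One must also track the sign and the factor $-24\cdot\frac{k+1}{6} = -4(k+1)$ correctly. Finally, $G$ may be any quasimodular form, indeed any $q$-series supported on $n \ge 0$, because only its $q$-expansion is used.
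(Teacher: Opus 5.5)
Your proposal is correct and is essentially the paper's proof: expand $L_{2,k}^{\alpha} = D^2 - \frac{k+1}{6}E_2 D + \frac{k(k+1)}{12}E_2' + \alpha E_4$ term by term with the Eisenstein $q$-expansions, split off the diagonal contributions, and combine the $\sigma_1$ terms. Your coefficients $4(k+1)$, $-2k(k+1)$, and $240\alpha$, and the observation that $E_2'$ has no constant term, match the paper's computation exactly.
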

\begin{proof}
    By \eqref{eqn:KZ2_def}, the $n$-th coefficient of $L_{2,k}^{\alpha}G$ is
    \begin{align*}
        &n^2 a_n - \frac{k+1}{6}\left(na_n - 24 \sum_{j=0}^{n-1} j\sigma_1(n-j)a_j\right) - \frac{k(k+1)}{12} \cdot 24 \sum_{j=0}^{n-1} (n-j) \sigma_1(n-j) a_j + \alpha a_n + 240\alpha \sum_{j=0}^{n-1} \sigma_3(n-j) a_j \\
        &= \left(n^2 - \frac{k+1}{6}n + \alpha\right) a_n + \sum_{j=0}^{n-1} \left(2(k+1)(2j-k(n-j)) \sigma_1(n-j) + 240\alpha \sigma_3(n-j)\right) a_j,
    \end{align*}
    which implies \eqref{eqn:kappa2} and \eqref{eqn:K2}.
\end{proof}

One can obtain a similar formula for the third-order Kaneko--Zagier operator.
\begin{lemma}
    \label{lem:KZ3_coeff}
    Let $G = \sum_{n \ge 0} a_n q^n$ be a quasimodular form, and let $n \ge 0$.
    For $k \in \bZ$ and $\alpha,\beta \in \bR$, the $n$-th Fourier coefficient of $L_{3,k}^{(\alpha,\beta)}G$ is
    \begin{equation}
        [q^n] (L_{3,k}^{(\alpha,\beta)}G) = \kappa_{3,k}^{(\alpha,\beta)}(n) a_n + \sum_{j=0}^{n-1} K_{3,k}^{(\alpha,\beta)}(n,j) a_j,
    \end{equation}
    where
    \begin{align}
        \kappa_{3,k}^{(\alpha,\beta)}(n) &:= n^3 - \frac{k+2}{4}n^2 + \alpha n + \beta, \label{eqn:KZ3_kappa} \\
        K_{3,k}^{(\alpha,\beta)}(n,j) &:= (k+2)(6j^2 - 6(k+1)(n-j)j + k(k+1)(n-j)^2) \sigma_1(n-j) \nonumber \\
        &\quad + 60\alpha(4j - k(n-j)) \sigma_3(n-j) - 504\beta \sigma_5(n-j). \label{eqn:KZ3_K}
    \end{align}
\end{lemma}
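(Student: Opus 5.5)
The plan is to mirror the proof of Lemma~\ref{lem:KZ2_coeff}: expand the first expression \eqref{eqn:KZ3_def} of $L_{3,k}^{(\alpha,\beta)}$ term by term using the $q$-expansions of the coefficient functions. Then collect the diagonal contribution (coefficient of $a_n$) and the off-diagonal contributions (coefficients of $a_j$, $0 \le j \le n-1$) separately. Throughout write $m = n-j \ge 1$ for the off-diagonal index.

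\textbf{Step 1: the coefficient $q$-expansions.} Applying $D$ to \eqref{eqn:e2fourier}, \eqref{eqn:e4fourier} and \eqref{eqn:e6fourier} gives
\[
E_2' = -24\sum_{m\ge1} m\sigma_1(m)q^m, \quad E_2'' = -24\sum_{m\ge1} m^2\sigma_1(m)q^m, \quad E_4' = 240\sum_{m\ge1} m\sigma_3(m)q^m,
\]
together with $E_2 = 1 - 24\sum_{m\ge1}\sigma_1(m)q^m$, $E_4 = 1+240\sum_{m\ge1}\sigma_3(m)q^m$ and $E_6 = 1-504\sum_{m\ge1}\sigma_5(m)q^m$. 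Also $D^rG = \sum_n n^r a_n q^n$. The key point is that $E_2'$, $E_2''$ and $E_4'$ have no constant term, so they contribute only off-diagonally.

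\textbf{Step 2: term-by-term contributions.} I multiply each coefficient series by the appropriate derivative of $G$ and read off $[q^n]$.
\begin{itemize}
\item $D^3G$ contributes $n^3a_n$.
\item $-\frac{k+2}{4}E_2D^2G$ contributes $-\frac{k+2}{4}n^2a_n$ and the off-diagonal term $6(k+2)j^2\sigma_1(m)a_j$.
\item $\frac{(k+1)(k+2)}{4}E_2'DG$ contributes $-6(k+1)(k+2)\,mj\,\sigma_1(m)a_j$.
\item $\alpha E_4DG$ contributes $\alpha n a_n + 240\alpha j\sigma_3(m)a_j$.
\item $-\frac{k(k+1)(k+2)}{24}E_2''G$ contributes $k(k+1)(k+2)m^2\sigma_1(m)a_j$.
\item $-\frac{k\alpha}{4}E_4'G$ contributes $-60k\alpha m\sigma_3(m)a_j$.
\item $+\beta E_6G$ contributes $\beta a_n - 504\beta\sigma_5(m)a_j$.
\end{itemize}

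\textbf{Step 3: collect.} The diagonal terms sum to $n^3 - \frac{k+2}{4}n^2 + \alpha n + \beta$, which is $\kappa_{3,k}^{(\alpha,\beta)}(n)$ in \eqref{eqn:KZ3_kappa}. Among the off-diagonal terms, the $\sigma_1$ terms factor as $(k+2)\bigl(6j^2 - 6(k+1)mj + k(k+1)m^2\bigr)$. The $\sigma_3$ terms combine to $60\alpha(4j-km)$, and the $\sigma_5$ term is $-504\beta$. Together these give \eqref{eqn:KZ3_K}.

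There is no real obstacle here; the only care needed is bookkeeping. In particular, the constant terms $1$ of $E_2$, $E_4$ and $E_6$ must feed only the diagonal part. The sign of the $\beta E_6$ term also needs checking: the bracket in \eqref{eqn:KZ3_def} is subtracted, so $-(-\beta E_6)$ becomes $+\beta E_6$.
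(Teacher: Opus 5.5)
Your proposal is correct and is the same argument as the paper's proof. Like the paper, you expand \eqref{eqn:KZ3_def} term by term using the $q$-expansions of $E_2, E_2', E_2'', E_4, E_4', E_6$ and collect the diagonal and off-diagonal coefficients; each of your seven contributions, and the final collected $\sigma_1$, $\sigma_3$ and $\sigma_5$ factors, agree with the paper's computation.
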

\begin{proof}
    By \eqref{eqn:KZ3_def}, the $n$-th coefficient of $L_{3,k}^{(\alpha,\beta)}G$ is
    \begin{align*}
        &n^3 a_n - \frac{k+2}{4}\left(n^2 a_n - 24 \sum_{j=0}^{n-1} \sigma_1(n-j)j^2 a_j\right) \\
        &\quad - \frac{(k+1)(k+2)}{4}\cdot 24 \sum_{j=0}^{n-1} (n-j)\sigma_1(n-j) ja_j
         + \alpha \left(na_n + 240 \sum_{j=0}^{n-1} \sigma_3(n-j) ja_j \right) \\
        &\quad + \frac{k(k+1)(k+2)}{24} \cdot 24\sum_{j=0}^{n-1} (n-j)^2 \sigma_1(n-j) a_j
        - \frac{k\alpha}{4} \cdot240 \sum_{j=0}^{n-1} (n-j) \sigma_3(n-j) a_j \\
        &\quad + \beta \left(a_n - 504 \sum_{j=0}^{n-1} \sigma_5(n-j)a_j\right),
    \end{align*}
    which implies \eqref{eqn:KZ3_kappa} and \eqref{eqn:KZ3_K}.
\end{proof}

The second- and third-order Kaneko--Zagier operators satisfy the following intertwining criterion.
\begin{lemma}
    \label{lem:intertwine}
    Let $k \in \bZ$ and let
    $\alpha,\beta,\gamma,\alpha',\beta',\gamma' \in \bR$.
    Define the shifted parameters
    \begin{align*}
        A &:= \alpha - \frac{3k^2+36k+104}{144}, &
        A' &:= \alpha' - \frac{3k^2+12k+8}{144}, \\
        B &:= \beta + \frac{k+4}{12}\alpha - \frac{(k+4)^2(k+7)}{864}, &
        B' &:= \beta' + \frac{k}{12}\alpha' - \frac{k^2(k+3)}{864}, \\
        C &:= \gamma - \frac{k(k+2)}{144}, &
        C' &:= \gamma' - \frac{(k+6)(k+8)}{144}.
    \end{align*}
    Then the intertwining relation
    \begin{equation}
        \label{eqn:intertwine}
        L_{3,k+4}^{(\alpha,\beta)} L_{2,k}^{\gamma} = L_{2,k+6}^{\gamma'} L_{3,k}^{(\alpha',\beta')}
    \end{equation}
    holds if
    \begin{equation}
        \label{eqn:intertwine_constraints}
        A+C = A'+C', \quad
        B-C = B'-\frac{2}{3}A', \quad
        C\left(A+\frac{1}{2}\right) = A'\left(C'+\frac{1}{6}\right)-B', \quad
        C\left(B-\frac{A}{3}-\frac{1}{9}\right) = B'\left(C'+\frac{1}{3}\right).
    \end{equation}
\end{lemma}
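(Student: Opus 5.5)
We will work entirely in the Serre-derivative form of both operators, \eqref{eqn:KZ2_def} and \eqref{eqn:KZ3_def_serre}, because there the shifted parameters appear directly as coefficients:
\[
L_{2,k}^{\gamma} = \partial_k^2 - C E_4,
\qquad
L_{3,k+4}^{(\alpha,\beta)} = \partial_{k+4}^3 + A E_4 \partial_{k+4} + B E_6 .
\]
Here $A$ and $B$ are computed with $k$ replaced by $k+4$ in \eqref{eqn:KZ3_def_serre}. Expanding $(3(k+4)^2+12(k+4)+8)/144$ gives exactly $(3k^2+36k+104)/144$, which confirms the definitions of $A$, $B$, $C$. Similarly,
\[
L_{3,k}^{(\alpha',\beta')} = \partial_k^3 + A'E_4\partial_k + B'E_6,
\qquad
L_{2,k+6}^{\gamma'} = \partial_{k+6}^2 - C'E_4 .
\]
Both sides of \eqref{eqn:intertwine} are then operators of order $5$ sending weight $k$ to weight $k+10$. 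The plan is to normalize both into the form
\[
\sum_j c_j(E_4,E_6)\,\partial_k^j ,
\]
where each $c_j$ is a modular form of weight $10-2j$ with $\partial$ composed at the correct weights, and then compare coefficients.

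To commute multiplication operators past Serre derivatives, we use the product rule $\partial_{m+w}(\phi F) = (\partial_w\phi)F + \phi\,\partial_m F$ together with \eqref{eqn:ramanujan_serre}, in the forms $\partial_4E_4 = -\tfrac13E_6$ and $\partial_6E_6 = -\tfrac12E_4^2$. On the left, we push $\partial_{k+4}^3$ and $\partial_{k+4}$ through $-CE_4$:
\[
\partial^3(E_4F) = E_4\partial^3F - E_6\partial^2F - \tfrac12E_4^2\partial F + \tfrac16E_4E_6F,
\]
with the appropriate weights. The last coefficient comes from $\partial^3E_4$: we have $\partial^2E_4 = \tfrac16E_4^2$, hence $\partial^3E_4 = \tfrac16\cdot 2E_4(-\tfrac13E_6) = -\tfrac19E_4E_6$, and the binomial pattern yields the coefficients $1,3,3,1$ applied to $E_4, -\tfrac13E_6, \tfrac16E_4^2, -\tfrac19E_4E_6$. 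On the right, we push $\partial_{k+6}^2$ and $-C'E_4$ through $\partial_k^3 + A'E_4\partial_k + B'E_6$. This uses $\partial^2(E_4G)$ and $\partial^2(E_6G)$, with
\[
\partial E_6 = -\tfrac12E_4^2,
\qquad
\partial^2E_6 = -\tfrac12\cdot 2E_4\cdot(-\tfrac13E_6) = \tfrac13E_4E_6 .
\]

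We then collect coefficients. For $\partial^5$ both sides give $1$. For $\partial^4$ both give $0$. For $\partial^3$ the coefficient of $E_4$ is $A - C$ on the left. On the right it is $A' + 2A'\cdot 0 \ldots$; this needs care, since the right side contributes $A'E_4$ from the middle term, an extra $E_4$ from the commutator, and $-C'E_4$. For $\partial^2$ we compare $E_6$ coefficients. For $\partial^1$ the coefficient space is spanned by $E_4^2$. For $\partial^0$ it is spanned by $E_4E_6$. This gives exactly four scalar equations, matching the four constraints in \eqref{eqn:intertwine_constraints}. Each equation will be rearranged into the stated shape. For instance, the $E_4E_6$ coefficient on the left is $C(\tfrac19 - B + \tfrac A3)$ up to sign, and on the right it is $B'(\ldots C' \ldots)$, giving the fourth relation. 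The first relation, $A + C = A' + C'$, should drop out of the $\partial^3$ comparison once all sign conventions are fixed. Since $E_4^2$, $E_6$, $E_4E_6$ are each one-dimensional spaces in their weights, matching scalars suffices, and the operator identity follows.

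The main obstacle is bookkeeping. The weight subscript on each $\partial$ shifts after every multiplication by $E_4$ or $E_6$, and the precise rational constants, such as $\tfrac12$, $\tfrac16$, $\tfrac13$, $\tfrac19$ and $\tfrac23$, must come out exactly. I would first do the computation symbolically, treating $\partial$ as a derivation on the weight-graded ring acting on an unknown $F$ of weight $k$, to verify the four coefficient equations. I would then double-check the result numerically via the $q$-expansion formulas of Lemmas \ref{lem:KZ2_coeff} and \ref{lem:KZ3_coeff} on a generic test series.
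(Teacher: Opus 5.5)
Your plan is the paper's: write the four operators in Serre form, move $\partial$ past $E_4,E_6$ with the Leibniz rule and \eqref{eqn:ramanujan_serre}, and compare coefficients of $\partial_k^j$. But the lemma is nothing beyond this computation, and the parts you wrote down are wrong in ways that change the result.

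First, a sign error in the setup. By \eqref{eqn:KZ2_def}, $L_{2,k}^{\gamma}=\partial_k^2-\bigl(\frac{k(k+2)}{144}-\gamma\bigr)E_4=\partial_k^2+CE_4$, and likewise $L_{2,k+6}^{\gamma'}=\partial_{k+6}^2+C'E_4$. With your $-CE_4$ and $-C'E_4$, the $\partial_k^3$-comparison gives $A-C=A'-C'$, and the other three equations also have the signs of $C,C'$ flipped. So the mismatch you noticed is an error in the operators themselves, not a convention to settle later.

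Second, your displayed $\partial^3(E_4F)$ contradicts your own values $\partial^2E_4=\frac16E_4^2$ and $\partial^3E_4=-\frac19E_4E_6$. The binomial rule gives $E_4\partial^3F-E_6\partial^2F+\frac12E_4^2\partial F-\frac19E_4E_6F$.

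Third, there is no extra $E_4$ from a commutator in the right-hand $\partial_k^3$-coefficient. We have $\partial_{k+6}^2\partial_k^3=\partial_k^5$ exactly, and $\partial^2(A'E_4\partial F)$ contributes only $A'E_4\partial^3F$ at order $3$. So that coefficient is $A'+C'$; any extra constant would contradict the first constraint.

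With these corrections, the right side needs (weights suppressed) $\partial^2(E_4\partial F)=E_4\partial^3F-\frac23E_6\partial^2F+\frac16E_4^2\partial F$ and $\partial^2(E_6F)=E_6\partial^2F-E_4^2\partial F+\frac13E_4E_6F$. The two sides then become
\begin{align*}
L_{3,k+4}^{(\alpha,\beta)}L_{2,k}^{\gamma}&=\partial_k^5+(A+C)E_4\partial_k^3+(B-C)E_6\partial_k^2\\
&\quad+C\bigl(A+\tfrac12\bigr)E_4^2\partial_k+C\bigl(B-\tfrac{A}{3}-\tfrac19\bigr)E_4E_6,\\
L_{2,k+6}^{\gamma'}L_{3,k}^{(\alpha',\beta')}&=\partial_k^5+(A'+C')E_4\partial_k^3+\bigl(B'-\tfrac{2A'}{3}\bigr)E_6\partial_k^2\\
&\quad+\bigl(A'(C'+\tfrac16)-B'\bigr)E_4^2\partial_k+B'\bigl(C'+\tfrac13\bigr)E_4E_6.
\end{align*}
The conditions \eqref{eqn:intertwine_constraints} are exactly coefficientwise equality of these two expressions; these are the normal forms the paper displays. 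For the ``if'' direction you do not need the one-dimensionality of the spaces of modular forms, since equal coefficients trivially give equal operators.
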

\begin{proof}
    By the definitions above and \eqref{eqn:KZ3_def_serre}, the four operators in \eqref{eqn:intertwine} have the Serre-derivative forms
    \begin{align*}
        L_{2,k}^{\gamma} &= \partial_k^2 + C E_4,
        &L_{3,k+4}^{(\alpha,\beta)} &= \partial_{k+4}^3 + A E_4\partial_{k+4} + B E_6, \\
        L_{2,k+6}^{\gamma'} &= \partial_{k+6}^2 + C' E_4,
        &L_{3,k}^{(\alpha',\beta')} &= \partial_k^3 + A' E_4\partial_k + B' E_6.
    \end{align*}
    Repeatedly applying the product rule together with \eqref{eqn:ramanujan_serre} gives the following normal forms for the two sides:
    \begin{align}
        L_{3,k+4}^{(\alpha,\beta)} L_{2,k}^{\gamma}
        &= \partial_k^5 +(A+C)E_4\partial_k^3 +(B-C)E_6\partial_k^2 + C\left(A+\frac{1}{2}\right)E_4^2\partial_k
        + C\left(B-\frac{A}{3}-\frac{1}{9}\right)E_4E_6, \label{eqn:intertwine_lhs} \\
        L_{2,k+6}^{\gamma'} L_{3,k}^{(\alpha',\beta')}
        &= \partial_k^5 +(A'+C')E_4\partial_k^3
        + \left(B'-\frac{2A'}{3}\right)E_6\partial_k^2 + \left(A'\left(C'+\frac{1}{6}\right)-B'\right)E_4^2\partial_k
        + B'\left(C'+\frac{1}{3}\right)E_4E_6. \label{eqn:intertwine_rhs}
    \end{align}
    Now the intertwining relation follows from the constraint equations \eqref{eqn:intertwine_constraints}.
\end{proof}

\section{Positive quasimodular forms}

In \cite{lee2024algebraic}, the author defined \emph{positive and completely positive quasimodular forms} and studied their basic properties.
In particular, that paper describes how positivity and complete positivity behave under (Serre) derivatives and antiderivatives, and gives a new proof of the modular form inequalities and of Kaneko--Koike's conjecture in the case of depth 1.
In this section, we briefly recall these results and prove a \emph{weak version} of Kaneko--Koike's conjecture in the depth $2$ case.

\subsection{Positivity and derivatives}

We recall the definition of positive and completely positive quasimodular forms from the author's previous work \cite{lee2024algebraic}.

\begin{definition}
\label{def:posqm}
A (nonzero) quasimodular form $F \in \QM_{w}^{s}$ is \emph{positive} if it takes positive real values on the positive imaginary axis, that is, if $F(it) > 0$ for all $t > 0$.
Moreover, we call $F$ \emph{completely positive} if it has real and nonnegative Fourier coefficients, i.e. $F(z) = \sum_{n \geq n_0} a_n q^n$ with $a_n \geq 0$ for all $n \geq n_0$.
We denote the sets of positive and completely positive quasimodular forms of weight $w$ and depth $s$ by $\QM_{w}^{s, +}$ and $\QM_{w}^{s, ++}$, respectively.
\end{definition}

The following proposition summarizes the behavior of positivity and complete positivity under derivatives and antiderivatives, including their Serre analogues, as established in \cite[Section 3]{lee2024algebraic}.

\begin{proposition}[Lee \cite{lee2024algebraic}]
\label{prop:derpos}
Let $F = \sum_{n \geq n_0} a_n q^n \in \QM_{w}^{s}$ and $F' = \sum_{n \geq n_0} na_n q^n \in \QM_{w+2}^{s + 1}$.
\begin{enumerate}
    \item If $F$ is a cusp form, $F \in \QM_{w}^{s, ++}$ if and only if $F' \in \QM_{w+2}^{s + 1, ++}$.
    \item If $F$ is a cusp form, then $F' \in \QM_{w +2}^{s +1, +}$ implies $F \in \QM_{w}^{s, +}$.
    \item If $F$ is a cusp form, then $F$ is completely positive if and only if all its derivatives are positive.
    \item If $\partial_k F \in \QM_{w+2}^{s+1, +}$ for some $k$ and $F(it_0) > 0$ for some $t_0 > 0$, then $F(it) > 0$ for all $0 < t \le t_0$.
    In particular, if $\partial_k F \in \QM_{w+2}^{s+1,+}$ and $F(it) > 0$ for sufficiently large $t$, then $F \in \QM_{w}^{s, +}$.
    \item If $F \in \QM_{w}^{s,++}$ and $n_0 \geq k / 12 \geq 0$, then $\partial_k F \in \QM_{w+2}^{s+1, ++}$.
\end{enumerate}
\end{proposition}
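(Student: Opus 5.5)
Each of the five statements reduces to an elementary fact about $q$-expansions or to a monotonicity argument along the imaginary axis. Throughout, I set $g(t) := F(it)$ for $t>0$, so that $q = e^{-2\pi t} \in (0,1)$. The chain rule gives
\[
g'(t) = i \frac{\dd F}{\dd z}(it) = i \cdot 2\pi i \, F'(it) = -2\pi F'(it),
\]
and more generally $g^{(m)}(t) = (-2\pi)^m (D^m F)(it)$.

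For (1), a cusp form has $n_0 \ge 1$. The coefficients $n a_n$ and $a_n$ then have the same sign for every $n \ge n_0$, and $F \ne 0$ if and only if $F' \ne 0$.

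For (2), since $F$ is a cusp form, $g(t) \to 0$ as $t \to \infty$. Integrating $g' = -2\pi F'(i\cdot)$ gives
\[
g(t) = 2\pi \int_t^\infty F'(is)\, \dd s,
\]
which is positive whenever $F'$ is positive.

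For (3), the forward direction follows by iterating (1). Each $D^m F$ then has nonnegative, not all zero, coefficients, and is therefore positive at real $q \in (0,1)$. For the converse, positivity of all $D^m F$ on the imaginary axis says that $(-1)^m g^{(m)}(t) > 0$ for all $m$, so $g$ is completely monotone on $(0,\infty)$. By Bernstein's theorem, $g$ is the Laplace transform of a positive measure $\mu$ on $[0,\infty)$. On the other hand, $g(t) = \sum_n a_n e^{-2\pi n t}$ is the Laplace transform of the signed discrete measure $\sum_n a_n \delta_{2\pi n}$, which converges absolutely for $t>0$. Uniqueness of the Laplace transform of (signed) measures forces these two measures to agree, so $a_n \ge 0$. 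This converse is the only genuinely non-formal step, and I expect the care to lie in justifying the uniqueness for signed measures with the appropriate growth. An alternative is to argue directly: if $a_N$ is the first negative coefficient, compare $D^m F(it)$ for suitable $t \to \infty$ and $m$, but Bernstein's theorem is cleaner.

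For (4), I use the integrating factor coming from $D \log \Delta = E_2$, which follows from the product formula for $\Delta = \eta^{24}$. This gives the identity
\[
\partial_k F = \Delta^{k/12} D\!\left(\Delta^{-k/12} F\right),
\]
valid on $\bH$ with the real branch on the imaginary axis, where $\Delta(it) > 0$. Setting $h(t) := \Delta(it)^{-k/12} g(t)$, I obtain
\[
h'(t) = -2\pi \Delta(it)^{-k/12} (\partial_k F)(it) < 0.
\]
Thus $h$ is strictly decreasing, so $h(t) \ge h(t_0) > 0$ for $0 < t \le t_0$, and the sign of $g$ follows. The second part of (4) applies the first part with $t_0$ arbitrarily large.

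For (5), I compute the coefficients directly from $E_2 = 1 - 24\sum_{m \ge 1}\sigma_1(m)q^m$:
\[
[q^n]\,\partial_k F = \left(n - \frac{k}{12}\right) a_n + 2k \sum_{n_0 \le j < n} \sigma_1(n-j)\, a_j .
\]
Since $n \ge n_0 \ge k/12 \ge 0$ and $a_j \ge 0$, every term is nonnegative. Nonvanishing of $\partial_k F$ follows because it is a nonzero quasimodular form; $\partial_k F = 0$ would force $F$ to be a multiple of $\Delta^{k/12}$, which can be excluded in the cases considered, or else the statement is read as nonnegativity of the coefficients.
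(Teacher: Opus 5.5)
Your proposal is correct and takes the same route as the paper. The paper defers the proof to \cite{lee2024algebraic}, but in the proof of Theorem~\ref{thm:Ywpos} it spells out exactly your integrating-factor argument $\frac{\dd}{\dd t}\bigl(F(it)/\Delta(it)^{k/12}\bigr)=-2\pi(\partial_kF)(it)/\Delta(it)^{k/12}$ for (4), and your arguments for the remaining parts are sound. One loose end is the nonvanishing in (5), which you can close directly: $\partial_kF=0$ forces $F=c\Delta^{n_0}$ with $n_0=k/12$ and $c=a_{n_0}>0$, and the $q^{n_0+1}$-coefficient of this form is $-24n_0c<0$ when $n_0\ge 1$, so the only exception is $k=0$ with $F$ a positive constant, which is a trivial counterexample to the statement as written.
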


\subsection{Extremal quasimodular forms and positivity}

\subsubsection{Extremal quasimodular forms}

In \cite{kaneko2006extremal}, Kaneko and Koike defined and studied \emph{extremal quasimodular forms}, which are the quasimodular forms of a given depth with the maximum possible order of vanishing at infinity.
In other words, for a given weight $w$ and depth $s$, a quasimodular form $f \in \QM_{w}^{s} \setminus \QM_{w}^{s-1}$ is \emph{extremal} if, for $m = \dim_\mathbb{C} \QM_{w}^{s}$, the first $m$ Fourier coefficients of $f = \sum_{n \geq 0} a_n q^n$ are
\[
    a_0 = a_1 = \cdots = a_{m-2} = 0, \qquad a_{m-1} \neq 0.
\]
They conjectured the existence and uniqueness (up to a nonzero scalar) of extremal forms for each even weight $w$ and depth $s$ satisfying $0 \leq s \leq w/2$ and $s \neq w/2 - 1$, and gave recursively defined examples in depths $1$ and $2$ that satisfy certain differential equations.
Pellarin \cite{pellarin2020extremal} established the conjecture for $s \leq 4$, and
Grabner \cite{grabner2020quasimodular} extended Kaneko--Koike's result, constructing differential equations and recurrence relations satisfied by extremal quasimodular forms of depth $\leq 4$ using vector-valued quasimodular forms.
Kaneko and Koike also conjectured that the Fourier coefficients of extremal forms of depth $\leq 4$ are all positive \cite[Conjecture 2]{kaneko2006extremal}, and Grabner \cite{grabner2022asymptotic} proved the conjecture for \emph{all but finitely many coefficients}.
The proof uses the explicit bounds of Jenkins and Rouse \cite{jenkins2011bounds} and coefficient asymptotics obtained from Deligne's bound.
The conjecture was proved in full for depth 1 in \cite[Corollary 4.4]{lee2024algebraic}.

\subsubsection{Positivity of extremal quasimodular forms of depth $2$}

For even $w \geq 4$ with $w \equiv 0 \pmod{4}$, the normalized depth 2 extremal forms $X_{w, 2}$ satisfy the following recurrence relations \cite{grabner2020quasimodular}\footnote{There is a minor error in \cite{grabner2020quasimodular}: one must replace $w^2$ in the numerator by $(w+4)^2$ in order for $X_{w+4,2}$ to be normalized. We make this correction in \eqref{eqn:d2eq1}.}: $X_{4, 2} = \frac{E_4 - E_2^2}{288}$ and
\begin{align}
    X_{w+4, 2} &= \frac{3(w+4)^2}{16(w+1)(w+2)^{2}(w+3)} \left(\frac{w(w+1)}{36} E_4 X_{w, 2} - \partial_{w-2}^{2}X_{w, 2}\right), \label{eqn:d2eq1} \\
    X_{w+2, 2} &= \frac{6}{w + 1} \partial_{w-2}X_{w, 2} \label{eqn:d2eq2} \\
    &= \frac{3w^2}{16(w^2 - 1)(w-6)^2} \left(\frac{(w-4)(w-5)}{36} E_4 X_{w-2,2} - \partial_{w-4}^{2}X_{w-2, 2}\right). \label{eqn:d2eq3}
\end{align}
Here \eqref{eqn:d2eq1} and \eqref{eqn:d2eq2} hold for all $w \equiv 0 \pmod{4}$ with $w \geq 4$, whereas \eqref{eqn:d2eq3} holds for $w \equiv 0 \pmod{4}$ with $w \geq 12$ (note that $X_{6,2}$ does not exist, so \eqref{eqn:d2eq3} is not available for $w = 8$).
The vanishing order of $X_{w, 2}$ at the cusp is $\lfloor \frac{w}{4} \rfloor$.
Also, when $w \equiv 0 \pmod{4}$, $X_{w,2}$ is a solution of the differential equation
\begin{align}
L_{3,w-2} X_{w, 2} &=  X_{w, 2}''' - \frac{w}{4} E_2 X_{w, 2}'' + \frac{w(w-1)}{4} E_{2}'X_{w, 2}' - \frac{w(w-1)(w-2)}{24} E_2'' X_{w, 2} \label{eqn:d2ode2} \\
&= \partial_{w-2}^{3}X_{w, 2} - \frac{3w^2 - 4}{144} E_4 \partial_{w-2}X_{w, 2} - \frac{(w-2)^{2}(w+1)}{864} E_6 X_{w, 2} = 0, \label{eqn:d2ode3}
\end{align}
where $L_{3,w-2} = L_{3,w-2}^{(0,0)}$ is the third-order Kaneko--Zagier operator defined in \eqref{eqn:KZ3_def}.

They also satisfy the following recurrence relation:
\begin{proposition}
\label{prop:d2eq}
For each multiple $w \geq 12$ of $4$, we have
\begin{equation}
\label{eqn:d2eq4}
    X_{w+2, 2} = \frac{w^{2}}{768(w - 1)(w+1)}(E_4 X_{w-2, 2}  - E_6 X_{w-4, 2}).
\end{equation}
\end{proposition}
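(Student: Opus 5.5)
Both sides of \eqref{eqn:d2eq4} lie in $\QM_{w+2}^{2}$: $E_4 X_{w-2,2}$ and $E_6 X_{w-4,2}$ are weight $w+2$ and depth at most $2$. By Pellarin's uniqueness of extremal forms of depth $2$, it suffices to check three things. First, $E_4X_{w-2,2}-E_6X_{w-4,2}$ vanishes at the cusp to order at least $\lfloor (w+2)/4\rfloor = w/4$. Second, it genuinely has depth $2$. Third, its coefficient of $q^{w/4}$ equals $768(w-1)(w+1)/w^2$, so that the normalization constant matches.

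For the vanishing order I will use the indexing convention that $X_{w-2,2}$ has order $\lfloor (w-2)/4\rfloor = w/4-1$ and $X_{w-4,2}$ has order $w/4-1$. Both products on the right therefore vanish to order exactly $w/4-1$, and the leading coefficients must cancel. Normalization means $X_{m,2} = q^{\lfloor m/4\rfloor}+O(q^{\lfloor m/4\rfloor+1})$, and $E_4,E_6$ have constant term $1$, so the $q^{w/4-1}$ coefficients cancel. The difference thus has order at least $w/4$, which is the extremal order for weight $w+2$ and depth $2$, since $\dim \QM_{w+2}^2 = w/4+1$.

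The coefficient of $q^{w/4}$ is then $240 + b - 504\cdot(-1)\cdot 0$-type terms. Explicitly, writing $X_{w-2,2} = q^{w/4-1}(1+b_1q+\cdots)$ and $X_{w-4,2}=q^{w/4-1}(1+c_1q+\cdots)$, it equals $240+b_1+504-c_1=744+b_1-c_1$. It remains to compute $b_1$ and $c_1$, the second normalized coefficients of extremal forms. I will get these from the differential equations. For $X_{w-4,2}$, where $w-4\equiv 0\pmod 4$, I use the ODE \eqref{eqn:d2ode2} with Lemma \ref{lem:KZ3_coeff} applied at $k=w-6$, $\alpha=\beta=0$. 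The coefficient at $n=w/4$ gives $\kappa(n)c_1+K(n,n-1)=0$, which solves for $c_1$ explicitly as a rational function of $w$. For $X_{w-2,2}$ I use \eqref{eqn:d2eq2}, namely $X_{w-2,2}=\frac{6}{w-3}\partial_{w-6}X_{w-4,2}$. Reading off the $q^{w/4-1}$ and $q^{w/4}$ coefficients of $\partial_{w-6}X_{w-4,2}$ in terms of $c_1$ (using $E_2=1-24q+\cdots$) and normalizing gives $b_1$. The prefactor $6/(w-3)$ is automatically consistent with normalization, which serves as a check. Verifying that $744+b_1-c_1 = 768(w^2-1)/w^2$ is a rational-function identity in $w$.

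For depth exactly $2$, I note that if the difference had depth $\le 1$, it would be a depth-$\le 1$ form of weight $w+2$ vanishing to order $w/4$. This exceeds the depth-$1$ extremal order $\dim\QM^1_{w+2}-1\approx w/6$ for $w\ge 12$, so the difference would be zero. That contradicts the nonzero leading coefficient computed above. So depth is automatic once the coefficient is nonzero.

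The main obstacle is the explicit computation of $b_1$ and $c_1$ and the final simplification. An alternative, if this gets messy, is to derive the identity from \eqref{eqn:d2eq1}–\eqref{eqn:d2eq3}: expand $\partial^2$ of $X_{w-2,2}$ via \eqref{eqn:d2eq2} and eliminate $\partial^3$ using the ODE \eqref{eqn:d2ode3}. Even then I would still check the constant by the leading-coefficient computation.
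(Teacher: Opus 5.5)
Your proof is correct, but it takes a different route from the paper.

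\textbf{The paper's route.} The paper does not invoke uniqueness for this statement. It applies $\partial_{w-2}$ to \eqref{eqn:d2eq1} at weight $w-4$. By \eqref{eqn:d2eq2} at weight $w$, the left side becomes $\frac{w+1}{6}X_{w+2,2}$. The right side involves $\partial_{w-2}(E_4X_{w-4,2})$, which the product rule and \eqref{eqn:d2eq2} at weight $w-4$ turn into a combination of $E_4X_{w-2,2}$ and $E_6X_{w-4,2}$. The remaining term $\partial_{w-6}^3X_{w-4,2}$ is eliminated with the MLDE \eqref{eqn:d2ode3} at weight $w-4$, and the constant $\frac{w^2}{768(w-1)(w+1)}$ falls out of the algebra. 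This is essentially the fallback in your last paragraph.

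\textbf{Your route.} You use the device the paper reserves for Proposition \ref{prop:ext2poseig}: the leading terms cancel, so Pellarin's uniqueness reduces everything to matching one Fourier coefficient. Your bookkeeping is right, and the rational identity you left unchecked does hold. The ODE at $n=w/4$ gives $c_1=\frac{2(w-4)(w^2+4w-48)}{w^2}$, which is exactly Lemma \ref{lem:secondcoeff}. Then \eqref{eqn:d2eq2} gives $b_1=\frac{(w+3)c_1+12(w-6)}{w-3}$, so
\[
744+b_1-c_1 = 744+\frac{6c_1+12(w-6)}{w-3} = 744+\frac{24(w^2-32)}{w^2} = \frac{768(w^2-1)}{w^2}.
\]

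\textbf{What each buys.} The paper's derivation is elementary given the recurrences and the MLDE, and does not depend on a deep uniqueness theorem. Yours replaces the product-rule manipulation with a single coefficient check and never uses \eqref{eqn:d2eq1}, but it rests on Pellarin's theorem.

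Your separate argument that the difference has depth exactly $2$ is harmless but not needed. It suffices to use uniqueness in the form the paper uses: the forms in $\QM^2_{w+2}$ vanishing to order at least $\dim\QM^2_{w+2}-1$ form a line.
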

\begin{proof}
Applying $\partial_{w-2}$ to \eqref{eqn:d2eq1}$_{w-4}$ and using \eqref{eqn:d2eq2}${}_{w}$ expresses $X_{w+2, 2}$ as a combination of $\partial_{w-2}(E_4 X_{w-4, 2})$ and $\partial_{w-6}^{3}X_{w-4, 2}$.
We then use \eqref{eqn:d2ode3}${}_{w-4}$ to eliminate the $\partial_{w-6}^{3}X_{w-4, 2}$ term.
This proves \eqref{eqn:d2eq4}${}_{w}$.
\end{proof}

For the admissible even weights $w \leq 14$, complete positivity follows from exceptional identities \cite[Proposition 4.6]{lee2024algebraic}.
For general weights, we prove a weak version of the Kaneko--Koike conjecture, namely the positivity of $X_{w,2}$, which is sufficient for our purposes.
The key idea is to use Nakaya's hypergeometric identity \cite{nakaya2024determination} and equation \eqref{eqn:d2eq4}.
Let ${}_{3}F_{2}$ be the hypergeometric function
\begin{equation}
    {}_{3}F_{2}(a_1, a_2, a_3; b_1, b_2; t) = \sum_{n \ge 0} \frac{(a_1)_n (a_2)_n (a_3)_n}{(b_1)_n (b_2)_n} \frac{t^n}{n!}.
\end{equation}
Nakaya proved that $X_{w, 2}$ admits the following hypergeometric expressions.
\begin{theorem}[Nakaya {\cite[Proposition 6.1]{nakaya2024determination}}]
\label{thm:extd2nakaya}
The normalized extremal quasimodular forms $X_{w, 2}$ for even $w \geq 4$ with $w \neq 6$ can be expressed as hypergeometric series:
\begin{align}
    X_{4k, 2}(z) &= j(z)^{-k} E_4(z)^{\frac{2k - 1}{2}} \cdot {}_{3}F_{2}\left(\frac{4k + 1}{6}, \frac{4k+3}{6}, \frac{4k+5}{6}; k+1, k+1; \frac{1728}{j(z)}\right) \label{eqn:extd2hg1}, \\
    X_{4k+2, 2}(z) &= j(z)^{-k} E_4(z)^{\frac{2k - 3}{2}} E_6(z) \cdot {}_{3}F_{2}\left(\frac{4k+3}{6}, \frac{4k+5}{6}, \frac{4k+7}{6}; k+1, k+1; \frac{1728}{j(z)}\right). \label{eqn:extd2hg2}
\end{align}
Here $k \geq 1$ in \eqref{eqn:extd2hg1} and $k \geq 2$ in \eqref{eqn:extd2hg2}. The first identity holds for $z=it$ and $t \geq 1$. The second holds for $z=it$ and $t>1$ and extends continuously to $t=1$; at $t=1$, it is understood in this limiting sense.
\end{theorem}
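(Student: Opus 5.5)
The plan is to reduce \eqref{eqn:extd2hg1} to a uniqueness statement for the third-order MLDE \eqref{eqn:d2ode3}. I will then obtain \eqref{eqn:extd2hg2} from \eqref{eqn:extd2hg1} by applying the Serre derivative through \eqref{eqn:d2eq2}.

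\textbf{Step 1: hypergeometric form of the MLDE.} Put $w = 4k$ and $t := 1728/j$. For $z = it$ with $t \ge 1$ we have $j(z) \ge 1728$, so $t \in (0,1]$, and $E_4(it) > 0$, so $E_4^{1/2}$ has a well-defined positive branch there. I will use the standard identities
\[
\partial_4 E_4 = -\tfrac13 E_6, \qquad Dj = -\frac{E_6}{E_4}\, j, \qquad E_6^2 = E_4^3 (1 - t).
\]
With these, any expression $E_4^{a} g(t)$ has Serre derivative
\[
\partial_{4a}\bigl(E_4^{a} g(t)\bigr) = E_4^{a-1} E_6 \cdot \bigl(t\, g'(t) - \tfrac{a}{3}\, g(t)\bigr).
\]
In this formula the Serre derivative only acts through $t\,\frac{d}{dt}$, and every factor $E_6^2$ can be rewritten as $E_4^3(1-t)$. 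Substituting $X = E_4^{(w-2)/4}\, g(t)$ into \eqref{eqn:d2ode3}, i.e.\ $L_{3,w-2}X = 0$, therefore gives a third-order Fuchsian equation for $g$ with singular points $t = 0, 1, \infty$. I expect this to be precisely the ${}_3F_2$ equation for $t^{-k}$ times ${}_3F_2\bigl(\tfrac{4k+1}{6}, \tfrac{4k+3}{6}, \tfrac{4k+5}{6}; k+1, k+1; t\bigr)$.

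A consistency check is available before carrying out the computation. The indicial polynomial of \eqref{eqn:d2ode3} in $q$ is $\kappa_{3,w-2}^{(0,0)}(n) = n^3 - \frac{w}{4}n^2$, by Lemma \ref{lem:KZ3_coeff}. Its roots are $0, 0, k$. These should correspond to the exponents $0,\, 1-b_1,\, 1-b_2 = 0, -k, -k$ of the ${}_3F_2$ equation at $t = 0$, shifted by the prefactor $t^{k}$ (recall $t \sim 1728q$).

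\textbf{Step 2: identification by uniqueness.} Because the $q$-indicial roots are $0, 0, k$, the MLDE has, up to scalar, a unique solution vanishing to order $k$ at the cusp. Any other solution contains the exponent-$0$ part, including its logarithmic companion. The form $X_{4k,2}$ is such a solution, since it vanishes to order $\lfloor w/4 \rfloor = k$. The right-hand side of \eqref{eqn:extd2hg1} is also such a solution: it is $t^{k}$ times a power series in $t$, and $1728/j = 1728q + O(q^2)$. Both sides have leading coefficient $1$, the right-hand side because $(1728/j)^{k} \cdot 1728^{-k}$ normalizes it. So the two sides agree as $q$-series near the cusp. They then agree on $z = it$ for all $t \ge 1$ by analytic continuation. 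Here the ${}_3F_2$ series converges for $|t| \le 1$, and it converges even at the endpoint $t = 1$ because $\Re\bigl(b_1 + b_2 - a_1 - a_2 - a_3\bigr) = 2k + 2 - (2k + \tfrac32) > 0$.

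\textbf{Step 3: the weight $4k+2$ case.} Apply \eqref{eqn:d2eq2}, $X_{4k+2,2} = \frac{6}{4k+1}\,\partial_{4k-2} X_{4k,2}$, to \eqref{eqn:extd2hg1}. Using the Step 1 formula, together with $D(j^{-k}) = k\,\frac{E_6}{E_4}\, j^{-k}$, the result has the shape
\[
j^{-k} E_4^{\frac{2k-3}{2}} E_6 \cdot \bigl(c_1\, t\, F'(t) + c_2\, F(t)\bigr), \qquad F(t) = {}_3F_2\bigl(\tfrac{4k+1}{6}, \tfrac{4k+3}{6}, \tfrac{4k+5}{6}; k+1, k+1; t\bigr).
\]
The contiguous relation $\bigl(t\frac{d}{dt} + a_1\bigr){}_3F_2(a_1,\ldots) = a_1\,{}_3F_2(a_1+1,\ldots)$, with $a_1 = \frac{4k+1}{6}$, converts this into $\frac{4k+7}{6}$ in place of $\frac{4k+1}{6}$, provided the constants work out. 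I expect $c_2/c_1 = a_1$ after the normalization factor $\frac{6}{4k+1}$.

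The relation holds for $t > 1$ in the $z$-variable, i.e.\ for $z = it$ with $t > 1$. At $t = 1$ we have $z = i$, where $E_6$ vanishes and the argument $1728/j$ of ${}_3F_2$ equals $1$. This endpoint is handled by continuity: differentiating the series term by term is only justified inside $|t| < 1$, hence the limiting interpretation. For $k \ge 2$ the exponent of $E_4$ is nonnegative enough that no issue arises from it.

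\textbf{Main obstacle.} I expect the main difficulty to be the change-of-variables computation in Step 1, i.e.\ verifying that the three Fuchsian parameters come out exactly as $\tfrac{4k+1}{6}, \tfrac{4k+3}{6}, \tfrac{4k+5}{6}$ and $k+1, k+1$. To keep this manageable, I would do it in Serre-derivative form, using only the single formula from Step 1 rather than expanding $E_2$-terms.
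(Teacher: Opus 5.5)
Your proposal is correct and follows essentially the same route as the paper: rewrite the MLDE as a third-order Fuchsian equation in $x=1728/j$ for $E_4^{-(w-2)/4}X_{4k,2}$, and identify $X_{4k,2}$ with the hypergeometric solution by uniqueness of the local solution with exponent $k$. As in the paper, you then use $b_1+b_2-a_1-a_2-a_3=\frac12>0$ to handle the endpoint $t=1$, and obtain weight $4k+2$ from \eqref{eqn:d2eq2} and the contiguous relation, with $t=1$ by continuity. The change of variables you left as ``expected'' does work out: the equation is $\theta^2(\theta-k)g = x\bigl(\theta-\frac{2k-1}{6}\bigr)\bigl(\theta-\frac{2k-3}{6}\bigr)\bigl(\theta-\frac{2k-5}{6}\bigr)g$ with $\theta=x\frac{d}{dx}$, which is the ${}_3F_2$ equation with $b_1=b_2=k+1$ conjugated by $x^k$. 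You should avoid using $t$ both for $1728/j$ and for $z=it$.
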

\begin{proof}
For the sake of completeness, we provide the details of the proof, which are omitted in \cite{nakaya2024determination}.
When $w = 4k$, \eqref{eqn:extd2hg1} can be shown by proving that both sides satisfy the same differential equation \eqref{eqn:d2ode2} and have the same normalized leading term at the cusp, as suggested in \cite[Section 6.1]{nakaya2024determination}.
In particular, it follows from \eqref{eqn:d2ode2} that the function $g(x) = E_4(z)^{-\frac{w-2}{4}}X_{w, 2}(z)$, where $x = 1728 / j(z)$, satisfies the equation
\begin{align*}
    &x^2(1 - x) g'''(x) + x \left(-\frac{w-12}{4} + \frac{w-18}{4} x\right) g''(x) \\
    &\quad + \left(-\frac{w-4}{4} - \frac{3w^2 - 72w + 452}{144}x\right) g'(x) + \frac{(w-10)(w-6)(w-2)}{1728} g(x) = 0.
\end{align*}
For $w=4k$, direct substitution shows that the right-hand side of \eqref{eqn:extd2hg1} yields a solution of this equation. Its leading term at the cusp is $q^k$, matching the normalization of $X_{4k,2}$, so uniqueness of the local solution with this exponent proves \eqref{eqn:extd2hg1}.
This differential equation has singularities at $x = 0,1,\infty$, and the analytic continuation of the solution $g(x)$ from $x = 0$ to $x = 1$ corresponds to $t$ decreasing from $\infty$ to $1$ for $z = it$.

For $X_{w + 2, 2} = X_{4k+2,2}$, we use \eqref{eqn:extd2hg1} and \eqref{eqn:d2eq2}:
\begin{align*}
    \frac{w+1}{6} X_{w+2, 2} &= \partial_{w-2} X_{w, 2} = X_{w, 2}' - \frac{w-2}{12} E_2 X_{w, 2} \\
    &= (j(z)^{-\frac{w}{4}} E_4(z)^{\frac{w-2}{4}})' \cdot {}_{3}F_{2}\left(\frac{w+1}{6},\frac{w+3}{6},\frac{w+5}{6}; \frac{w}{4} + 1, \frac{w}{4} + 1; \frac{1728}{j(z)}\right) \\
    &\quad+ j(z)^{-\frac{w}{4}} E_4(z)^{\frac{w-2}{4}} \left(\frac{1728}{j(z)}\right)' \cdot \frac{\dd}{\dd x} {}_{3}F_{2}\left(\frac{w+1}{6},\frac{w+3}{6},\frac{w+5}{6}; \frac{w}{4} + 1, \frac{w}{4} + 1; x\right) \bigg|_{x = 1728 / j(z)} \\
    &\quad - \frac{w-2}{12} E_2(z) j(z)^{-\frac{w}{4}} E_4(z)^{\frac{w-2}{4}} \cdot {}_{3}F_{2}\left(\frac{w+1}{6},\frac{w+3}{6},\frac{w+5}{6}; \frac{w}{4} + 1, \frac{w}{4} + 1; \frac{1728}{j(z)}\right) \\
    &= \partial_{w-2} (j(z)^{-\frac{w}{4}} E_4(z)^{\frac{w-2}{4}}) \cdot {}_{3}F_{2}\left(\frac{w+1}{6},\frac{w+3}{6},\frac{w+5}{6}; \frac{w}{4} + 1, \frac{w}{4} + 1; \frac{1728}{j(z)}\right) \\
    &\quad + j(z)^{-\frac{w}{4}} E_4(z)^{\frac{w-2}{4}} \left(\frac{1728}{j(z)}\right)' \cdot \frac{\dd}{\dd x} {}_{3}F_{2}\left(\frac{w+1}{6},\frac{w+3}{6},\frac{w+5}{6}; \frac{w}{4} + 1, \frac{w}{4} + 1; x\right) \bigg|_{x = 1728 / j(z)} \\
    &= \frac{w+1}{6} j(z)^{-\frac{w}{4}} E_4(z)^{\frac{w-6}{4}} E_6(z) \cdot {}_{3}F_{2} \left(\frac{w+1}{6},\frac{w+3}{6},\frac{w+5}{6}; \frac{w}{4} + 1, \frac{w}{4} + 1; \frac{1728}{j(z)}\right) \\
    &\quad + j(z)^{-\frac{w}{4}} E_4(z)^{\frac{w-6}{4}} E_6(z) \cdot x\frac{\dd}{\dd x} {}_{3}F_{2}\left(\frac{w+1}{6},\frac{w+3}{6},\frac{w+5}{6}; \frac{w}{4} + 1, \frac{w}{4} + 1; x\right) \bigg|_{x = 1728 / j(z)} \\
    &= \frac{w+1}{6} j(z)^{-\frac{w}{4}} E_4(z)^{\frac{w-6}{4}} E_6(z) \cdot {}_{3}F_{2} \left(\frac{w+7}{6},\frac{w+3}{6},\frac{w+5}{6}; \frac{w}{4} + 1, \frac{w}{4} + 1; \frac{1728}{j(z)}\right) \\
    &= \frac{w+1}{6} j(z)^{-\frac{w}{4}} E_4(z)^{\frac{w-6}{4}} E_6(z) \cdot {}_{3}F_{2} \left(\frac{w+3}{6},\frac{w+5}{6},\frac{w+7}{6}; \frac{w}{4} + 1, \frac{w}{4} + 1; \frac{1728}{j(z)}\right).
\end{align*}
The penultimate equality uses the identity
\[
\left(1 + \frac{x}{a_1} \frac{\dd}{\dd x}\right) {}_{3}F_{2}(a_1, a_2, a_3; b_1, b_2; x) = {}_{3}F_{2}(a_1 + 1, a_2, a_3; b_1, b_2; x).
\]
For $t>1$, we have $0<1728/j(it)<1$. The first series also converges at $t=1$ because $b_1 + b_2 - a_1 - a_2 - a_3 = (k + 1) + (k + 1) - \frac{4k+1}{6} - \frac{4k+3}{6} - \frac{4k+5}{6} = \frac{1}{2} > 0$, while continuity of $X_{4k+2,2}$ at $i$ gives the claimed limiting interpretation of the second identity.
\end{proof}

\begin{theorem}
\label{thm:kkd2weak}
$X_{w, 2}$ is positive for every even $w \geq 4$ with $w \neq 6$.
\end{theorem}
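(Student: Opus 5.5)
We have to show $X_{w,2}(it) > 0$ for all $t > 0$. Since each $X_{w,2}$ has weight $w$ and real Fourier coefficients, we have $X_{w,2}(i/t) = (it)^{w}X_{w,2}(it)\cdot t^{-2w}$-type relations only up to quasimodular correction terms, so we cannot simply reduce to $t \ge 1$ by modularity. Instead we argue in two regimes: use Nakaya's formula (Theorem~\ref{thm:extd2nakaya}) on $t \ge 1$, and then propagate positivity to $0 < t < 1$ with a differential/recurrence argument.

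\textbf{Step 1 ($t \ge 1$).} Here $j(it) \ge 1728 > 0$, $E_4(it) > 0$, and $x = 1728/j(it) \in (0,1]$. In \eqref{eqn:extd2hg1} all parameters $a_i, b_i$ are positive, so the ${}_3F_2$ series has positive terms. It therefore converges to a positive value on $(0,1]$; at $x=1$ we use the convergence criterion $b_1+b_2-\sum a_i = 1/2>0$ checked in the proof of Theorem~\ref{thm:extd2nakaya}. Hence $X_{4k,2}(it) > 0$ for $t \ge 1$.

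For $w = 4k+2$ the same reasoning applies for $t > 1$, using $E_6(it) > 0$ there. This holds because $E_6(i)=0$, $E_6(it)\to 1$ as $t \to \infty$, and $E_6$ has no zeros on the imaginary axis for $t>1$, since the zeros of $E_6$ in the fundamental domain lie only at $i$. At $t=1$ the value is $0$.

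\textbf{Step 2 ($0<t<1$), $w \equiv 0 \pmod 4$.} Apply Proposition~\ref{prop:derpos}(4) with the Serre derivative $\partial_{w-2}$. By \eqref{eqn:d2eq2}, $\partial_{w-2}X_{w,2} = \frac{w+1}{6}X_{w+2,2}$. So if $X_{w+2,2}$ were known to be positive on all of $t>0$, then positivity of $X_{w,2}$ for large $t$ would give positivity everywhere. This is circular, so instead we work only on $(0,1]$. The proof of (4) is a local argument: with $F=X_{w,2}$,
\[
\frac{d}{dt}\left(\Delta(it)^{-(w-2)/12}F(it)\right) = -2\pi\,\Delta(it)^{-(w-2)/12}(\partial_{w-2}F)(it).
\]
It shows that if $\partial_kF>0$ on $(0,t_0]$ and $F(it_0)>0$, then $F>0$ on $(0,t_0]$.

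We then run a simultaneous induction on $w$ using \eqref{eqn:d2eq4},
\[
X_{w+2,2}=c_w\bigl(E_4X_{w-2,2}-E_6X_{w-4,2}\bigr), \qquad c_w > 0.
\]
On $0<t\le 1$ we have $E_6(it)\le 0$, because $E_6(i/t)=t^6E_6(it)$ flips sign across $t=1$. Also $E_4>0$. So if $X_{w-2,2}>0$ and $X_{w-4,2}>0$ on $(0,1]$, then $X_{w+2,2}>0$ on $(0,1)$. Thus:
\begin{itemize}
\item positivity of all forms $X_{4m+2,2}$ on $(0,1)$ follows inductively from \eqref{eqn:d2eq4};
\item combined with Step 1 and the local derivative argument at $t_0=1$, this gives positivity of $X_{4m,2}$ on $(0,1]$;
\item this feeds back into the next step of the induction.
\end{itemize}
The base cases $w\le 14$ are covered by the complete positivity from \cite[Proposition 4.6]{lee2024algebraic}.

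\textbf{Step 3 ($w\equiv 2 \pmod 4$).} Positivity on $t>1$ comes from Step 1, and on $(0,1)$ from Step 2. At $t=1$ we use \eqref{eqn:d2eq4}: there $E_6(i)=0$, so $X_{w+2,2}(i)=c_wE_4(i)X_{w-2,2}(i)>0$.

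\textbf{Main obstacle.} The delicate point is organizing the induction so that Proposition~\ref{prop:derpos}(4) is applied only on the interval $(0,1]$ where $X_{w+2,2}$ is already known to be positive. It also requires confirming the sign of $E_6$ on the imaginary axis. I expect the bookkeeping of which weights are available in \eqref{eqn:d2eq4} (valid for $w\ge 12$) and the small-weight base cases to need the most care.
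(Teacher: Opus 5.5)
Your proposal is correct and is essentially the paper's proof. Both use the same simultaneous induction from the base cases $w\le 14$: Nakaya's formulas for $t>1$ (and at $t=1$ when $4\mid w$); the recurrence \eqref{eqn:d2eq4} with $E_4>0\ge E_6$ on $(0,1]$, including the evaluation at $z=i$, for weights $\equiv 2 \pmod 4$; and the Serre-derivative monotonicity from \eqref{eqn:d2eq2} for weights $\equiv 0 \pmod 4$.

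Two minor points. First, the transformation law is $E_6(i/t)=-t^6E_6(it)$, and it is this minus sign that produces the sign flip across $t=1$. Second, the global form of Proposition~\ref{prop:derpos}(4) is not actually circular: $X_{w+2,2}>0$ for $t>1$ comes from Nakaya independently of the induction, and that is how the paper applies it, although your localized version at $t_0=1$ works just as well.
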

\begin{proof}
We use induction on multiples $w$ of $4$, proving the result simultaneously for $X_{w,2}$ and $X_{w+2,2}$, and consider the cases $t > 1$ and $0 < t \le 1$ separately.
The admissible cases $w \leq 14$ follow from the complete positivity noted above and serve as the base cases.
Throughout, we use the standard facts that $E_4(it) > 0$ for all $t > 0$, that $E_6(it) > 0$ for $t > 1$, and that $E_6(i) = 0$. The transformation formula $E_6(i/t)=-t^6E_6(it)$ then gives $E_6(it)<0$ for $0<t<1$.
The case $t>1$ follows from \eqref{eqn:extd2hg1} and \eqref{eqn:extd2hg2}: the hypergeometric series have positive coefficients, and $E_4(it)$, $E_6(it)$, and $j(it)$ are positive on this interval.
Nonnegativity at $t=1$ follows by continuity.
We now consider $0 < t \le 1$.
Assume that $X_{w-2, 2}$ and $X_{w-4, 2}$ are positive for $0 < t \le 1$.
By \eqref{eqn:d2eq4} and $E_6(i) = 0$, we have
\[
    X_{w+2,2}(i) = \frac{w^2}{768(w-1)(w+1)} E_4(i) X_{w-2,2}(i)
\]
for all $w \equiv 0 \pmod{4}$ with $w \geq 12$.
From $E_4(i) > 0$ and $X_{10,2}(i) > 0$, we conclude that $X_{w+2,2}(i) > 0$ for all $w \equiv 0 \pmod{4}$ with $w \geq 12$.
Since $E_4(it) > 0$ and $E_6(it) \le 0$ for $0 < t \le 1$, \eqref{eqn:d2eq4} shows that $X_{w + 2, 2}$ is also positive on $0 < t \le 1$.
Finally, Proposition \ref{prop:derpos} (4) and \eqref{eqn:d2eq2} show that $X_{w, 2}$ is positive for $0 < t \le 1$.
\end{proof}

\begin{remark}
In \cite{kaneko2017third}, certain solutions of third-order Kaneko--Zagier MLDEs \cite{kaneko1998supersingular} are characterized.
In particular, solutions of \emph{(vacuum) character type} are characterized; these are solutions $f$ of weight $k$ for which the quotient $f / \eta^{2k}$ has nonnegative integral Fourier coefficients, implying the positivity of $f$.
However, our $X_{w, 2}$ are not of this type since the coefficients are not integral (after normalization).
\end{remark}

\section{Fourier eigenfunctions of Feigenbaum--Grabner--Hardin}

In \cite{feigenbaum2021eigenfunctions}, Feigenbaum, Grabner, and Hardin constructed families of Fourier eigenfunctions from modular forms in dimensions divisible by $4$.
In dimensions $8$, $12$, and $24$, their construction recovers the magic functions of Viazovska \cite{viazovska2017sphere}, Cohn--Gon{\c{c}}alves \cite{cohn2019optimal}, and Cohn--Kumar--Miller--Radchenko--Viazovska \cite{cohn2017sphere}, respectively.
Following Viazovska, they constructed the $(-1)^{d/4}$- and $(-1)^{d/4 + 1}$-eigenfunctions separately as Laplace transforms of certain modular forms.

In this section, we briefly review both constructions.
For the $(-1)^{d/4}$-eigenfunctions, we relate the associated family of quasimodular forms to the depth 2 extremal quasimodular forms of Kaneko and Koike \cite{kaneko2006extremal} and use Theorem~\ref{thm:kkd2weak} to prove its positivity; for the $(-1)^{d/4+1}$-eigenfunctions, we develop a parallel argument using a level-$2$ companion family.

\subsection{$(-1)^{d/4}$-eigenforms}
\label{sec:fgh_poseig}

For a dimension $d \equiv 0\pmod{4}$, the $(-1)^{d/4}$-eigenfunctions of the Fourier transform arise from quasimodular forms of level $\SL_2(\bZ)$ and depth $2$, which can be expressed as polynomials in $E_2$, $E_4$, and $E_6$ \cite[Theorem 3.2 and Propositions 5.1 and 5.3]{feigenbaum2021eigenfunctions}.
To simplify computations further, we normalize the forms $f_w$ in \cite{feigenbaum2021eigenfunctions} so that their first nonzero Fourier coefficients are all $1$.
The following theorem is the normalized version of their results.

\begin{theorem}[Feigenbaum--Grabner--Hardin \cite{feigenbaum2021eigenfunctions}, normalized]
\label{thm:fghposeig}
For even $w \geq 8$, define quasimodular forms $\{F_w\}_{w \geq 8}$ of weight $w$ and depth $2$ as
\begin{align*}
F_{8} &= \frac{1}{1728}(E_2^2 E_4 - 2E_2 E_6 + E_4^2) \\
F_{10} &= \frac{1}{1728}(- E_2^2 E_6 + 2 E_2 E_4^2 - E_4 E_6) \\
F_{12} &= \frac{1}{518400}(E_2^2 E_4^2 - 2 E_2 E_4 E_6 + E_6^2)\\
F_{14} &= \frac{1}{725760}(-E_2^2 E_4 E_6 + E_2 E_4^3 + E_2 E_6^2 - E_4^2 E_6)\\
F_{16} &= \frac{1}{3657830400}(49 E_2^2 E_4^3 - 25 E_2^2 E_6^2 - 48 E_2 E_4^2 E_6 - 25 E_4^4 + 49 E_4 E_6^2)\\
F_{18} &= \frac{1}{2874009600} (-12 E_2^2 E_4^2 E_6 + 5 E_2 E_4^4 + 19 E_2 E_4 E_6^2 - 5 E_4^3 E_6 - 7 E_6^3)
\end{align*}
and, for $w \equiv 0 \pmod{4}$ (with $w \geq 12$ in the first recurrence and $w \geq 8$ in the second),
\begin{align}
    F_{w + 2} &= \frac{3(w-8)(w-4)}{16(w-18)(w-7)(w-6)(w-5)} \left(\frac{(w-11)(w-10)}{36} E_{4} F_{w-2} - \partial_{w-4}^{2} F_{w-2}\right) \label{eqn:poseig2} \\
    F_{w + 4} &= \frac{3(w-4)w}{16(w-10)(w-5)(w-3)(w+2)} \left(\frac{(w-6)(w-5)}{36} E_{4} F_{w} - \partial_{w-2}^{2} F_{w} \right).\label{eqn:poseig4}
\end{align}
Then the vanishing order of $F_{w}$ at the cusp is $\lfloor \frac{w}{4}\rfloor - 1$.
For $w \equiv 0\pmod{4}$, these forms $F_w$ satisfy the third-order ordinary differential equation
\begin{equation}
    L_{3,w-2}^{(\frac{w-4}{4},0)} F_w = \partial_{w-2}^{3}F_w - \frac{3w^2 - 36w + 140}{144}E_4 \partial_{w-2}F_{w} - \frac{(w-14)(w-5)(w-2)}{864}E_6 F_w = 0, \label{eqn:poseigde}
\end{equation}
or equivalently,
\begin{equation}
    \label{eqn:poseigde2}
    F_w''' - \frac{w}{4} E_2 F_w'' + \left(\frac{w(w-1)}{4} E_2' + \frac{w-4}{4}E_4\right)F_w' - \left(\frac{w(w-1)(w-2)}{24} E_2'' + \frac{(w-2)(w-4)}{16} E_4'\right) F_w = 0.
\end{equation}
Let $d$ now be a positive integer divisible by $4$, and set $n_{d,+} = \lfloor (d + 4)/16\rfloor + 1$.
Let $w = w_{d, +} = 12 \lfloor (d+4) / 16\rfloor - d /2  + 16$.
For $\mathbf{x}\in\mathbb{R}^{d}$, define
\begin{align}
    \label{eqn:Mdpos}
    M_{d, +}(\mathbf{x}) = 4 \sin^{2} \left(\frac{\pi \|\mathbf{x}\|^{2}}{2}\right) \int_{0}^{\infty} \frac{t^{2-w}F_{w}(i/t)}{\Delta(it)^{n_{d,+}}} e^{-\pi \|\mathbf{x}\|^{2} t} \dd t
\end{align}
Then this function satisfies (here we abuse notation by writing $M_{d,+}(\mathbf{x}) = M_{d,+}(\|\mathbf{x}\|)$)
\begin{align*}
    \widehat{M_{d,+}}(\mathbf{x}) &= (-1)^{d/4} M_{d,+}(\mathbf{x}) \quad \forall \mathbf{x} \in \mathbb{R}^{d}, \\
    M_{d,+}(\sqrt{2 n_{d,+}}) &= 0\quad\text{and}\quad M_{d,+}'(\sqrt{2n_{d,+}}) \neq 0, \\
    M_{d,+}(\sqrt{2m}) &= M_{d,+}'(\sqrt{2m}) = 0 \quad\forall m > n_{d,+}, m \in \mathbb{Z}.
\end{align*}
The integral \eqref{eqn:Mdpos} converges when $\|\bx\| > \sqrt{2n_{d,+}}$ and admits an analytic continuation to the origin (see Section \ref{subsec:nonposorig_pos}).
\end{theorem}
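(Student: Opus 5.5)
This is a normalized restatement of Feigenbaum--Grabner--Hardin, so my plan is to verify the normalized claims directly rather than rederive their construction.

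\emph{Step 1 (initial forms and vanishing orders).} Expand $F_8,\dots,F_{18}$ in $q$ using \eqref{eqn:e2fourier}--\eqref{eqn:e6fourier}. I expect to find leading terms $q^{1}, q^{1}, q^{2}, q^{2}, q^{3}, q^{3}$ with coefficient $1$, which confirms both the normalization and the order $\lfloor w/4\rfloor-1$ in these cases.

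\emph{Step 2 (the ODE and the recurrences).} I would prove \eqref{eqn:poseigde} by induction on $w\equiv 0\pmod 4$, in the same way \eqref{eqn:d2ode3} is handled for $X_{w,2}$. The base cases $w=8,12,16$ are finite identities in $\bC[E_2,E_4,E_6]$, checked with \eqref{eqn:ramanujan_serre}.

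For the inductive step, write \eqref{eqn:poseig4} as $F_{w+4}=c\,L_{2,w}^{\gamma}F_w$ (up to sign), where $L_{2,w}^{\gamma}=\partial_{w-2}^2-\tfrac{(w-6)(w-5)}{36}E_4$ in Serre form. Then apply Lemma~\ref{lem:intertwine} with $k=w-2$. The parameters are:
\begin{itemize}
\item $(\alpha',\beta')=(\tfrac{w-4}{4},0)$;
\item $(\alpha,\beta)=(\tfrac{w}{4},0)$;
\item $\gamma$ fixed by \eqref{eqn:poseig4}.
\end{itemize}
I would then check that the four constraints \eqref{eqn:intertwine_constraints} hold with a suitable $\gamma'$; this is a polynomial check in $w$. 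Given the intertwining, the relation $L_{3,w+2}^{(\frac{w}{4},0)}F_{w+4}=c\,L_{2,w+4}^{\gamma'}L_{3,w-2}^{(\frac{w-4}{4},0)}F_w=0$ follows.

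The equivalence of \eqref{eqn:poseigde} with \eqref{eqn:poseigde2} is just \eqref{eqn:KZ3_def} versus \eqref{eqn:KZ3_def_serre} at $(\alpha,\beta)=(\tfrac{w-4}{4},0)$.

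\emph{Step 3 (vanishing orders in general).} Read the leading coefficient from Lemma~\ref{lem:KZ2_coeff}: if $F_w=q^{m}+\cdots$, then $L_{2,w-2}^{\gamma}F_w$ has $q^m$-coefficient $\kappa_{2,w-2}^{\gamma}(m)$. With $m=w/4-1$, I expect this coefficient to vanish, so the order increases by one. The next coefficient, combined with the indicial roots of the third-order ODE, should fix the normalization constant in \eqref{eqn:poseig4}. Relation \eqref{eqn:poseig2} is treated the same way, using \eqref{eqn:poseig4} together with the ODE to eliminate $\partial^3$, exactly as in Proposition~\ref{prop:d2eq}.

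\emph{Step 4 (Fourier eigenfunction properties).} I would follow Viazovska's argument as in \cite{feigenbaum2021eigenfunctions}.
\begin{itemize}
\item Convergence for $\|\bx\|^2>2n_{d,+}$: as $t\to\infty$, $t^{2-w}F_w(i/t)$ has $q$-expansion at most $e^{2\pi t\cdot\text{(depth-2 pole)}}$, while $\Delta(it)^{-n}\sim e^{2\pi n t}$.
\item The eigenvalue $(-1)^{d/4}$: write $\sin^2$ as a combination of $e^{\pm\pi i\|\bx\|^2}$ and shift the contour. The Gaussian $e^{\pi i\|\bx\|^2 z}$ has Fourier transform $z^{-d/2}$ times the Gaussian at $-1/z$. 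Eigenfunction-ness then reduces to the functional equation of $F_w$ under $S$ and $T$. Quasimodularity of $F_w$ produces $E_2$-correction terms, and these cancel precisely because of the depth-$2$ structure given by the weight choice $w=w_{d,+}$. I would take this step from \cite[Theorem 3.2]{feigenbaum2021eigenfunctions} rather than redo it.
\item Roots: $\sin^2$ gives double zeros at $\sqrt{2m}$. At $m=n_{d,+}$, the integral has a simple pole from the leading $q$-term, which cancels one order.
\end{itemize}

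\emph{Main obstacle.} I expect Step 4's contour and $E_2$-cancellation bookkeeping, and matching the exponent $n_{d,+}$ to the cusp order, to be the hard part. Steps 1--3 are mechanical, with the key input being Lemma~\ref{lem:intertwine}.
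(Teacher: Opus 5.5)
Your proposal is correct. On the one claim the paper proves itself here, the normalization (and with it the cusp order) for $w\equiv 0\pmod 4$, your Step~3 is the paper's argument:
\begin{itemize}
\item Lemma~\ref{lem:KZ3_coeff} applied to \eqref{eqn:poseigde} at $n=\frac{w}{4}$ gives $b^{(w)}_{w/4}$.
\item The operator in \eqref{eqn:poseig4} is $-L_{2,w-2}^{\alpha_w}$ with $\alpha_w=-\frac{(w-10)(w-4)}{48}$, so $\kappa_{2,w-2}^{\alpha_w}(\frac{w}{4}-1)=0$.
\item The $q^{w/4}$-coefficient of $\cS_wF_w$ is $\frac{16(w-10)(w-5)(w-3)(w+2)}{3w(w-4)}$, which the prefactor in \eqref{eqn:poseig4} cancels exactly.
\end{itemize}

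The genuine difference is the MLDE. The paper imports \eqref{eqn:poseigde} from \cite{feigenbaum2021eigenfunctions}, whereas you derive it from $F_8$ via Lemma~\ref{lem:intertwine}, a device the paper uses only for $Y_w$ and $\wtilde{G}_w$. This works. Take $k=w-2$, $(\alpha,\beta)=(\frac{w}{4},0)$, $(\alpha',\beta')=(\frac{w-4}{4},0)$ and $C=-\frac{(w-6)(w-5)}{36}$; note the operator is $L_{2,w-2}^{\gamma}$, not $L_{2,w}^{\gamma}$ as you wrote. The first constraint then forces $C'=-\frac{(w-2)(w-3)}{36}$, i.e.\ $\gamma'=-\frac{w(w-10)}{48}$. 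The other three constraints hold identically in $w$, so $w=8$ is the only base case you need.

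For $w\equiv 2\pmod 4$, your sentence about \eqref{eqn:poseig2} should be made explicit. The ingredient you need is $\partial_{w-2}F_w=\frac{w-5}{6}F_{w+2}$. It is proved jointly with $F_{w+2}\propto E_4F_{w-2}-E_6F_{w-4}$ by the induction in the proof of Proposition~\ref{prop:serrederposeig}. That induction uses only \eqref{eqn:poseig2}, \eqref{eqn:poseig4}, the MLDE you have already established, and a finite base-case check, so there is no circularity. Since $\partial_{w-2}(q^{w/4-1}+\cdots)=\frac{w-5}{6}q^{w/4-1}+\cdots$, the order and normalization of $F_{w+2}$ follow at once. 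You therefore never need the separate MLDE \cite[eq.~(5$\cdot$20)]{feigenbaum2021eigenfunctions} that the paper invokes for this family.

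In short, your route makes the differential-equation claim and the $w\equiv 2\pmod 4$ claims self-contained, at the cost of one polynomial identity check. The paper's route is shorter because it cites both from Feigenbaum--Grabner--Hardin.

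Your Step~4 rests on \cite{feigenbaum2021eigenfunctions} just as the paper does. If you flesh it out, two heuristics need correcting:
\begin{itemize}
\item By \eqref{eqn:phi}, the $E_2$-corrections in $t^{2-w}F_w(i/t)$ contribute only the polynomial factors $t^2$, $t$, $1$. All exponential growth as $t\to\infty$ comes from $\Delta(it)^{-n_{d,+}}$, and convergence as $t\to 0$ uses $\frac{w}{4}-1>n_{d,+}$.
\item The simple pole that turns the double zero of $\sin^2$ at $\sqrt{2n_{d,+}}$ into a simple zero comes from the $q^{-n_{d,+}}$ coefficient of $C_{w-4}/\Delta^{n_{d,+}}$ in \eqref{eqn:qexp_integrand}. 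So $M_{d,+}'(\sqrt{2n_{d,+}})\neq 0$ amounts to $C_{w-4}$ not vanishing at the cusp.
\end{itemize}
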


\begin{proof}
    The proof can be found in \cite{feigenbaum2021eigenfunctions}, except for the normalization claim.
    We first consider $w \equiv 0 \pmod{4}$.
    Let $F_w = \sum_{n \ge \frac{w}{4} - 1} b_n^{(w)} q^n$.
    The MLDE is \eqref{eqn:poseigde}, and its expression in terms of ordinary derivatives is \eqref{eqn:poseigde2}; the corresponding third-order Kaneko--Zagier operator is $L_{3,w-2}^{((w-4)/4, 0)}$.
    By applying Lemma \ref{lem:KZ3_coeff} to $F_{w}$ with $n = \frac{w}{4}$, we get
    \[
        0 = \frac{(w-4)w}{16} b_{\frac{w}{4}}^{(w)} - \frac{w^3 - 12w^2 + 224w - 960}{8} b_{\frac{w}{4}-1}^{(w)}
        \quad\Leftrightarrow\quad
        \frac{b_{\frac{w}{4}}^{(w)}}{b_{\frac{w}{4}-1}^{(w)}} = \frac{2(w^3 - 12w^2 + 224w - 960)}{w(w-4)}.
    \]
    We use induction on such $w$, the base case $w = 8$ being immediate from the expression for $F_8$ above.
    Assume that $b_{\frac{w}{4}-1}^{(w)} = 1$, so that
    \begin{equation}
        \label{eqn:F_qexp}
        F_w = q^{\frac{w}{4} - 1} + \frac{2(w^3 - 12w^2 + 224w - 960)}{w(w-4)} q^{\frac{w}{4}} + \cdots.
    \end{equation}
    Then direct computation with \eqref{eqn:poseig4} shows that $b_{\frac{w}{4}-1}^{(w+4)} = 0$ and $b_{\frac{w}{4}}^{(w+4)} = 1$.
    The case $w \equiv 2 \pmod{4}$ is proved similarly, starting from $w=10$ and using the corresponding MLDE \cite[eq. (5$\cdot$20)]{feigenbaum2021eigenfunctions} together with \eqref{eqn:poseig2}.
\end{proof}

From now on, we assume $d \equiv 0 \pmod{8}$, so that $(-1)^{d/4} = 1$ and $M_{d,+}$ is a $+1$-eigenfunction of the Fourier transform.
The corresponding weight $w_{d,+}$ and $n_{d,+}$ are
\begin{align}
    w_{d,+} = \begin{cases}
        \frac{d}{4} + 16 & d \equiv 0 \pmod{16} \\ \frac{d}{4} + 10 & d \equiv 8 \pmod{16}
    \end{cases}, \quad n_{d,+} = \begin{cases}
        \frac{d}{16} + 1 = \frac{w}{4} - 3 & d \equiv 0 \pmod{16} \\ \frac{d-8}{16} + 1 = \frac{w}{4} - 2  & d \equiv 8 \pmod{16}
    \end{cases}.
\end{align}
Note that $n_{d,+} = \left\lfloor \frac{d}{16} \right\rfloor + 1$ when $d \equiv 0 \pmod{8}$.

\subsubsection{Positivity of $F_w$}
\label{subsec:posFw}

In this section, we prove that the forms $F_w$ are positive for every even $w \geq 8$.
The proof is based on Propositions \ref{prop:serrederposeig} and \ref{prop:ext2poseig}, which reduce the positivity of the forms $F_w$ to Theorem \ref{thm:kkd2weak}.
\begin{proposition}
\label{prop:serrederposeig}
For each multiple $w$ of $4$, \eqref{eqn:poseigeq1} holds when $w \geq 12$, and \eqref{eqn:poseigeq2} holds when $w \geq 8$:
\begin{align}
F_{w+2} &= \frac{(w-8)(w-4)}{768(w-7)(w-5)} (E_4 F_{w - 2} - E_6 F_{w - 4})   \label{eqn:poseigeq1} \\
\partial_{w - 2} F_{w} &= \frac{w - 5}{6} F_{w + 2}. \label{eqn:poseigeq2}
\end{align}
\end{proposition}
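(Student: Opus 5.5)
The plan mirrors the derivation of Proposition~\ref{prop:d2eq} for the extremal forms $X_{w,2}$: first establish the first-order relation \eqref{eqn:poseigeq2}, then combine it with the second-order recurrences and the MLDE to get \eqref{eqn:poseigeq1}.

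For \eqref{eqn:poseigeq2}, fix $w \equiv 0 \pmod 4$ with $w\ge 8$. Both sides lie in $\QM_{w+2}^{2}$, since $\partial_{w-2}=\partial_{w-s}$ with $s=2$ preserves depth $2$ by Kaneko--Koike. My first attempt is to show that $\partial_{w-2}F_w$ satisfies the MLDE that $F_{w+2}$ satisfies for $w+2\equiv 2\pmod 4$ (the FGH equation cited in the proof of Theorem~\ref{thm:fghposeig}). This might be done by intertwining $\partial_{w-2}$ with $L^{((w-4)/4,0)}_{3,w-2}$ and reducing with \eqref{eqn:poseigde}. Once that holds, both sides have vanishing order $\frac{w}{4}-1$ at the cusp. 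Comparing leading coefficients via \eqref{eqn:F_qexp} gives $(\frac w4-1)-\frac{w-2}{12}=\frac{w-5}{6}$, using $E_2=1+O(q)$. Uniqueness of the solution with that leading exponent then finishes the argument.

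If the MLDE route is awkward, I would fall back on a direct induction. Verify the base cases $w=8,12$ from the explicit formulas for $F_8,F_{10},F_{12},F_{14}$ using \eqref{eqn:ramanujan_serre}. For the inductive step, apply $\partial_{w+2}$ to \eqref{eqn:poseig4}$_w$. Commute $\partial$ through $E_4\cdot$ and $\partial^2$, producing $E_6F_w$ and $\partial^3_{w-2}F_w$ terms. Use \eqref{eqn:poseigde} to replace $\partial^3_{w-2}F_w$ by $E_4\partial_{w-2}F_w$ and $E_6F_w$ terms, then substitute the inductive hypothesis $\partial_{w-2}F_w=\frac{w-5}{6}F_{w+2}$. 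The result should match \eqref{eqn:poseig2}$_{w+4}$ applied to $F_{w+2}$, up to the factor $\frac{w-1}{6}$. The step I expect to be hardest is this one. Matching the leftover $E_6F_w$ terms requires either an auxiliary identity or checking that the coefficient vanishes, and the constants must agree exactly. Where needed, I would argue in a low-dimensional space of depth-$2$ forms, where agreement of a few $q$-coefficients suffices.

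For \eqref{eqn:poseigeq1}, with $w\equiv0\pmod4$ and $w\ge12$, I copy the proof of Proposition~\ref{prop:d2eq}.
\begin{enumerate}
\item Write \eqref{eqn:poseig4}$_{w-4}$, which expresses $F_w$ through $E_4F_{w-4}$ and $\partial^2_{w-6}F_{w-4}$.
\item Apply $\partial_{w-2}$ and use \eqref{eqn:poseigeq2}$_w$ on the left, giving $\frac{w-5}{6}F_{w+2}$.
\item On the right, $\partial_{w-2}(E_4F_{w-4})=-\frac13E_6F_{w-4}+E_4\partial_{w-6}F_{w-4}$.
\item Eliminate $\partial^3_{w-6}F_{w-4}$ with \eqref{eqn:poseigde}$_{w-4}$.
\item Replace every $\partial_{w-6}F_{w-4}$ by $\frac{w-9}{6}F_{w-2}$ using \eqref{eqn:poseigeq2}$_{w-4}$.
\end{enumerate}
What remains is a combination of $E_4F_{w-2}$ and $E_6F_{w-4}$. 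Collecting the rational constants should yield exactly $\frac{(w-8)(w-4)}{768(w-7)(w-5)}(E_4F_{w-2}-E_6F_{w-4})$; in particular the two coefficients must come out equal and opposite, which serves as a consistency check. This is a routine rational-function simplification.
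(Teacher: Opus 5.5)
\textbf{There is a genuine gap in your proof of \eqref{eqn:poseigeq2}.} Your steps 1--5 for \eqref{eqn:poseigeq1} are correct, but they consume \eqref{eqn:poseigeq2}${}_{w}$ and \eqref{eqn:poseigeq2}${}_{w-4}$. So everything rests on \eqref{eqn:poseigeq2}, and that part is not proved.

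The MLDE route is only a hope. The weight $\equiv 2 \pmod 4$ equation is not available in the paper, and you never say which operator would intertwine with $\partial_{w-2}$.

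In the fallback, you correctly aim to compare with \eqref{eqn:poseig2}${}_{w+4}$, but you leave open the decisive step. Carrying out your own manipulations gives exactly
\[
\partial_{w+2}F_{w+4}=\frac{(w-4)w}{4608(w-3)}\bigl(E_4F_{w+2}-E_6F_w\bigr).
\]
The $E_6F_w$ coefficient is not zero. So you still need $F_{w+6}$ to be the matching multiple of $E_4F_{w+2}-E_6F_w$, which is precisely \eqref{eqn:poseigeq1}${}_{w+4}$. But your derivation of \eqref{eqn:poseigeq1}${}_{w+4}$ uses \eqref{eqn:poseigeq2}${}_{w+4}$ on the left-hand side, so invoking it here is circular.

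Comparing ``a few $q$-coefficients'' does not close the gap either. $\dim\QM^2_{w+6}=\frac{w}{4}+2$ grows with $w$. Even after using vanishing orders, you would need the $q^{\frac{w}{4}+1}$-coefficient of $F_{w+6}$ as a rational function of $w$, and you have not computed it.

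\textbf{The missing idea} is to get \eqref{eqn:poseigeq1}${}_{w+4}$ independently from \eqref{eqn:poseig2}${}_{w+4}$, using only information at weight $w$. By \eqref{eqn:poseigeq2}${}_{w}$ you have $\partial_w^2F_{w+2}=\frac{6}{w-5}\partial_{w-2}^3F_w$. Then \eqref{eqn:poseigde}${}_{w}$ and \eqref{eqn:poseigeq2}${}_{w}$ again give
\[
\partial_w^2F_{w+2}=\frac{3w^2-36w+140}{144}E_4F_{w+2}+\frac{(w-14)(w-2)}{144}E_6F_w .
\]
Since $\frac{(w-7)(w-6)}{36}-\frac{3w^2-36w+140}{144}=\frac{(w-14)(w-2)}{144}$, the recurrence \eqref{eqn:poseig2}${}_{w+4}$ collapses to
\[
F_{w+6}=\frac{(w-4)w}{768(w-3)(w-1)}\bigl(E_4F_{w+2}-E_6F_w\bigr),
\]
which is \eqref{eqn:poseigeq1}${}_{w+4}$. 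Substituting this into your displayed formula gives $\partial_{w+2}F_{w+4}=\frac{w-1}{6}F_{w+6}$, i.e.\ \eqref{eqn:poseigeq2}${}_{w+4}$.

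This is the paper's induction. At each step it derives \eqref{eqn:poseigeq1}${}_{w+4}$ first via \eqref{eqn:poseig2}, and then \eqref{eqn:poseigeq2}${}_{w+4}$ via $\partial_{w+2}$ applied to \eqref{eqn:poseig4}${}_{w}$, starting from directly checked base cases. With that in place, your steps 1--5 become a valid alternative derivation of \eqref{eqn:poseigeq1}, rather than an ingredient the induction depends on.
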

\begin{proof}
The proof is similar to that of Proposition \ref{prop:d2eq}.
We can directly check \eqref{eqn:poseigeq2} for $w = 8$ and both equations for $w = 12$.
For the induction step, let $w \geq 12$ and assume that \eqref{eqn:poseigeq1}${}_{w}$ and \eqref{eqn:poseigeq2}${}_{w}$ hold.
In \eqref{eqn:poseig2}${}_{w+4}$, we can write $\partial_{w}^{2}F_{w+2}$ as $\frac{6}{w-5}\partial_{w-2}^{3}F_{w}$ using \eqref{eqn:poseigeq2}${}_{w}$, and use \eqref{eqn:poseigde}${}_{w}$ (and \eqref{eqn:poseigeq2}${}_{w}$ again) to express it as a combination of $E_4 F_{w+2}$ and $E_6 F_{w}$, which proves \eqref{eqn:poseigeq1}${}_{w+4}$.
\eqref{eqn:poseigeq2}${}_{w+4}$ can be shown by applying $\partial_{w+2}$ to \eqref{eqn:poseig4}${}_{w}$ and using \eqref{eqn:poseigde}${}_{w}$ and \eqref{eqn:poseigeq1}${}_{w+4}$.
\end{proof}

For $w \equiv 0 \pmod{4}$, write the Fourier expansions of the normalized forms as
\begin{equation}
    \label{eqn:ab_notation}
    X_{w, 2} = \sum_{n \geq \frac{w}{4}} a_{n}^{(w)} q^{n}, \qquad F_{w} = \sum_{n \geq \frac{w}{4}-1} b_{n}^{(w)} q^{n},
\end{equation}
so that $a_{\frac{w}{4}}^{(w)} = b_{\frac{w}{4}-1}^{(w)} = 1$; the coefficients $b_{n}^{(w)}$ are those of \eqref{eqn:F_qexp}.
The second nonzero Fourier coefficient of $X_{w-4, 2}$ is given by the following lemma.

\begin{lemma}
\label{lem:secondcoeff}
For $w \geq 12$ and $w \equiv 0 \pmod{4}$, we have
\begin{equation}
    a_{\frac{w}{4}}^{(w-4)} = \frac{2(w-4)(w^2 + 4w - 48)}{w^2} \label{eqn:possecondcoeff}
\end{equation}
\end{lemma}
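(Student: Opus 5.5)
The plan is to read off the second coefficient directly from the third-order differential equation \eqref{eqn:d2ode2} satisfied by the depth-$2$ extremal form $X_{w-4,2}$, using the coefficient formula of Lemma~\ref{lem:KZ3_coeff}. This mirrors the normalization argument in the proof of Theorem~\ref{thm:fghposeig}.

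Set $v := w-4$, which is a multiple of $4$ with $v \ge 8$, and put $m := v/4 = \frac{w}{4}-1$. Then $X_{v,2}$ is normalized with vanishing order $m$ at the cusp, so
\[
X_{v,2} = q^{m} + a^{(v)}_{m+1} q^{m+1} + \cdots, \qquad a^{(v)}_{m+1} = a^{(w-4)}_{\frac{w}{4}}.
\]
By \eqref{eqn:d2ode2}, it satisfies $L_{3,v-2}^{(0,0)} X_{v,2} = 0$. I apply Lemma~\ref{lem:KZ3_coeff} with $k = v-2 = 4m-2$, $\alpha=\beta=0$, and $n = m+1$. Since $a_j = 0$ for $j < m$, only the term $j = m$ survives in the sum, and there $n-j = 1$ with $\sigma_1(1) = 1$. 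The coefficient identity therefore reduces to
\[
\kappa_{3,k}^{(0,0)}(m+1)\, a^{(v)}_{m+1} + K_{3,k}^{(0,0)}(m+1,m) = 0.
\]

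Next I evaluate the two quantities. Since $(k+2)/4 = m$, we get
\[
\kappa_{3,k}^{(0,0)}(m+1) = (m+1)^2\bigl((m+1)-m\bigr) = (m+1)^2.
\]
For the kernel,
\[
K_{3,k}^{(0,0)}(m+1,m) = (k+2)\bigl(6m^2 - 6(k+1)m + k(k+1)\bigr) = 4m\bigl(6m^2 - 6(4m-1)m + (4m-2)(4m-1)\bigr).
\]
The bracket simplifies to $-2m^2-6m+2$, so the kernel equals $-8m(m^2+3m-1)$. Solving gives
\[
a^{(v)}_{m+1} = \frac{8m(m^2+3m-1)}{(m+1)^2}.
\]

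Finally I substitute $m = (w-4)/4$. Then $m+1 = w/4$ and $m^2+3m-1 = (w^2+4w-48)/16$, which yields exactly \eqref{eqn:possecondcoeff}.

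The only point needing care, and the closest thing to an obstacle, is the applicability of the ingredients. The ODE \eqref{eqn:d2ode2} and the normalization with vanishing order $\lfloor v/4\rfloor$ hold for every multiple $v \ge 4$ of $4$, which covers $v = w-4 \ge 8$. The leading coefficient $(m+1)^2$ is nonzero, so the coefficient is uniquely determined. As a sanity check, the case $w = 12$ can be compared against $X_{8,2}$ computed from \eqref{eqn:d2eq1}.
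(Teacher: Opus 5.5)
Your proof is correct, and the arithmetic checks out. With $k=4m-2$ one gets $\kappa_{3,k}^{(0,0)}(m+1)=(m+1)^2$ and $K_{3,k}^{(0,0)}(m+1,m)=-8m(m^2+3m-1)$, which yields the stated value after substituting $m=\frac{w}{4}-1$.

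It is, however, a different route from the paper's. The paper does not use the MLDE \eqref{eqn:d2ode2} here. Instead it takes the $q^{w/4}$-coefficient of the weight-raising recurrence \eqref{eqn:d2eq1}$_{w-4}$. This expresses the leading coefficient of $X_{w,2}$ as an affine function of $a^{(w-4)}_{w/4}$, and the paper then solves by imposing that $X_{w,2}$ is normalized.

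Your argument has several advantages:
\begin{itemize}
\item It is local to the single weight $w-4$ and needs only the $j=m$ term with $\sigma_1(1)=1$.
\item It is exactly the mechanism the paper itself uses to obtain $b^{(w)}_{w/4}$ in the proof of Theorem~\ref{thm:fghposeig}.
\item It does not depend on the normalizing constant in \eqref{eqn:d2eq1}, the one corrected in the footnote. Combined with the paper's coefficient computation, your value in fact independently confirms that the corrected constant makes $X_{w,2}$ normalized.
\end{itemize}

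The paper's route, in turn, avoids the MLDE entirely and uses only the recurrence together with the normalization convention. The cost is having to compute the $q^{w/4}$-coefficients of $E_4X_{w-4,2}$ and $\partial_{w-6}^2X_{w-4,2}$.
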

\begin{proof}
Taking the coefficient of $q^{w/4}$ in \eqref{eqn:d2eq1}${}_{w-4}$ shows that the $\frac{w}{4}$-th Fourier coefficient of $X_{w, 2}$ is
\begin{align*}
\frac{3w^2}{16(w-1)(w-2)^2 (w-3)} \left( - \frac{2w - 1}{6}a_{\frac{w}{4}}^{(w-4)} + \frac{18w^2 - 129w + 240}{3}\right)
\end{align*}
(and $(\frac{w}{4}-1)$-th Fourier coefficient vanishes).
However, since the forms $X_{w, 2}$ are all normalized, the above expression must equal $1$, and solving for $a_{\frac{w}{4}}^{(w-4)}$ gives \eqref{eqn:possecondcoeff}${}_{w}$.
\end{proof}

We can now express the forms $F_w$ in terms of depth 2 extremal quasimodular forms as follows.
\begin{proposition}
\label{prop:ext2poseig}    
For $w \geq 12$ and $w \equiv 0\pmod{4}$, we have
\begin{align}
    F_{w} &= - \frac{256(w-3)(w-2)(w-1)}{(w-4)w^2} X_{w, 2} + E_4 X_{w-4,2} \label{eqn:poseigeq3} \\
    F_{w+2} &= \frac{2(w - 3)}{3(w-4)} E_4 X_{w-2, 2} + \frac{w-6}{3(w-4)} E_6 X_{w-4, 2}. \label{eqn:poseigeq4}
\end{align}
\end{proposition}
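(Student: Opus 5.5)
Both identities are equalities in $\QM_w^2$ (resp. $\QM_{w+2}^2$), so I would prove them by induction on $w \equiv 0 \pmod 4$, working only with the recurrences already available: \eqref{eqn:d2eq1}, \eqref{eqn:d2eq2}, \eqref{eqn:d2eq4} and the MLDE \eqref{eqn:d2ode3} for $X_{w,2}$, and \eqref{eqn:poseig4}, \eqref{eqn:poseigeq1}, \eqref{eqn:poseigeq2} and the MLDE \eqref{eqn:poseigde} for $F_w$. The base case $w=12$ (and, if needed, $w=16$) is a direct polynomial identity in $E_2,E_4,E_6$. It can be checked from the explicit $F_{12},F_{14},F_{16},F_{18}$ together with $X_{4,2}=(E_4-E_2^2)/288$ and the forms $X_{8,2},X_{10,2},X_{12,2}$ generated from it by \eqref{eqn:d2eq1} and \eqref{eqn:d2eq2}.

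The order of the induction is as follows.

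\textbf{First, derive \eqref{eqn:poseigeq4} from \eqref{eqn:poseigeq3}.} Assuming \eqref{eqn:poseigeq3}${}_w$, apply $\partial_{w-2}$ to both sides and use \eqref{eqn:poseigeq2}${}_w$ on the left, which gives $\frac{w-5}{6}F_{w+2}$. On the right:
\begin{itemize}
\item $\partial_{w-2}X_{w,2}=\frac{w+1}{6}X_{w+2,2}$ by \eqref{eqn:d2eq2}${}_w$;
\item by the product rule and \eqref{eqn:ramanujan_serre},
\[
\partial_{w-2}(E_4X_{w-4,2}) = -\tfrac13 E_6X_{w-4,2}+E_4\partial_{w-6}X_{w-4,2} = -\tfrac13E_6X_{w-4,2}+\tfrac{w-3}{6}E_4X_{w-2,2},
\]
using \eqref{eqn:d2eq2}${}_{w-4}$.
\end{itemize}
Finally, substitute $X_{w+2,2}$ from \eqref{eqn:d2eq4}${}_w$, which is a combination of $E_4X_{w-2,2}$ and $E_6X_{w-4,2}$. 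Everything is then expressed in $E_4X_{w-2,2}$ and $E_6X_{w-4,2}$, and \eqref{eqn:poseigeq4}${}_w$ follows once the rational coefficients are checked to match. As a sanity check, the $E_6$ coefficient comes out as $\frac{6}{w-5}\bigl(-\frac13+\frac{w^2}{768(w-1)(w+1)}\cdot\frac{256(w-3)(w-2)(w-1)\cdot(w+1)}{6(w-4)w^2}\bigr)$, and this should simplify to $\frac{w-6}{3(w-4)}$. It does: the bracket equals $\frac{-(w-4)+(w-3)(w-2)/6\cdot\ldots}{\ldots}$, and I would verify this routinely.

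\textbf{Next, derive \eqref{eqn:poseigeq3}${}_{w+4}$.} I would use \eqref{eqn:poseigeq1}${}_{w+4}$, which gives $F_{w+6}$ from $E_4F_{w+2}$ and $E_6F_w$. That is the wrong parity, so instead I use \eqref{eqn:poseig4}${}_w$:
\[
F_{w+4}\propto \tfrac{(w-6)(w-5)}{36}E_4F_w-\partial_{w-2}^2F_w .
\]
Insert \eqref{eqn:poseigeq3}${}_w$ for $F_w$ and expand $\partial^2_{w-2}$ of each piece.
\begin{itemize}
\item $\partial^2_{w-2}X_{w,2}$ is eliminated via \eqref{eqn:d2eq1}${}_w$ in favour of $E_4X_{w,2}$ and $X_{w+4,2}$.
\item $\partial^2_{w-2}(E_4X_{w-4,2})$ expands by the product rule into $E_4^2X_{w-4,2}$, $E_6\partial X_{w-4,2}$ and $E_4\partial^2 X_{w-4,2}$. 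The last of these is again reduced by \eqref{eqn:d2eq1}${}_{w-4}$ to $E_4^2X_{w-4,2}$ and $E_4X_{w,2}$, and $E_6\partial X_{w-4,2}=\frac{w-3}{6}E_6X_{w-2,2}$.
\end{itemize}
The resulting expression involves $X_{w+4,2}$, $E_4X_{w,2}$, $E_4^2X_{w-4,2}$ and $E_6X_{w-2,2}$. To collapse it to the target form $c\,X_{w+4,2}+E_4X_{w,2}$, use \eqref{eqn:d2eq4} at weight $w-4$ (valid for $w\ge16$; the case $w=12$ is handled directly), namely $E_4X_{w-4,2}-E_6X_{w-6,2}\propto X_{w-2,2}$. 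An alternative is to first establish the analogous companion identity for $E_6X_{w-2,2}$, namely the odd-weight identity \eqref{eqn:poseigeq4} shifted, which is exactly what the first step provides. Concretely, I would multiply \eqref{eqn:poseigeq4}${}_{w-4}$ by $E_4$ to trade $E_4^2X_{w-4,2}$ and $E_6X_{w-2,2}$ terms against $E_4F_{w-2}$, and then use \eqref{eqn:poseigeq1} to return to $F$'s.

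\textbf{Expected obstacle.} This second step is where I expect the main difficulty: choosing the right bookkeeping so that the leftover $E_4^2$ and $E_6$ terms cancel exactly. A robust fallback is available. Both sides of \eqref{eqn:poseigeq3}${}_{w+4}$ are annihilated by the same third-order operator: the left by \eqref{eqn:poseigde}, and the right after applying an intertwining of the type in Lemma \ref{lem:intertwine}, since $E_4X_{w,2}$ satisfies a transformed MLDE. Both sides also lie in $\QM^2_{w+4}$, so it suffices to compare the first few Fourier coefficients. Those coefficients are controlled by Lemma \ref{lem:secondcoeff} and \eqref{eqn:F_qexp}, and the constant $-\frac{256(w-3)(w-2)(w-1)}{(w-4)w^2}$ is precisely what makes the $q^{w/4}$-coefficients agree with $b^{(w)}_{w/4}$.
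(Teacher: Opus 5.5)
\textbf{Where your argument stands.} Your derivation of \eqref{eqn:poseigeq4} from \eqref{eqn:poseigeq3} is the paper's: apply $\partial_{w-2}$, use \eqref{eqn:poseigeq2}$_w$ and \eqref{eqn:d2eq2}, then \eqref{eqn:d2eq4}$_w$. Your coefficient check there is correct.

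\textbf{The gap is in \eqref{eqn:poseigeq3}, which is the real content.} Your expansion of \eqref{eqn:poseig4}$_w$ is fine. It leaves exactly the term $-\frac{w-3}{9}\bigl(E_4^2X_{w-4,2}-E_6X_{w-2,2}\bigr)$, which must be absorbed. None of your three proposals does this as written.
\begin{itemize}
\item The relation ``$E_4X_{w-4,2}-E_6X_{w-6,2}\propto X_{w-2,2}$'' is not an instance of \eqref{eqn:d2eq4}, and it equates forms of weights $w$ and $w-2$. At weight $w-4$, \eqref{eqn:d2eq4} actually reads $X_{w-2,2}\propto E_4X_{w-6,2}-E_6X_{w-8,2}$, which only brings in lower forms.
\item Multiplying \eqref{eqn:poseigeq4}$_{w-4}$ by $E_4$ gives weight-$(w+2)$ terms in $X_{w-6,2}$ and $X_{w-8,2}$, not the weight-$(w+4)$ terms you need.
\item The fallback asserts, without proof, that $E_4X_{w,2}-c\,X_{w+4,2}$ is annihilated by \eqref{eqn:poseigde}$_{w+4}$. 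But $X_{w+4,2}$ is not a solution of that equation: its leading exponent $\frac w4+1$ is not an indicial root. So whether the combination is a solution depends on the exact value of $c$. That is the content of the claim, and it does not follow from any MLDE satisfied by $E_4X_{w,2}$ alone.
\item Mere membership of both sides in $\QM^2_{w+4}$ would require matching $\dim\QM^2_{w+4}=\frac w4+2$ coefficients. Lemma \ref{lem:secondcoeff} and \eqref{eqn:F_qexp} supply only two.
\end{itemize}

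\textbf{The missing idea (the paper's route), which makes the induction unnecessary.} $F_w$ and $X_{w-4,2}$ are both normalized with vanishing order $\frac w4-1$. Hence $E_4X_{w-4,2}-F_w\in\QM^2_w$ vanishes to order at least $\frac w4$, so it is extremal. By uniqueness of depth-$2$ extremal forms (Pellarin), it equals $c_wX_{w,2}$. The constant $c_w$ is then read off from the $q^{w/4}$-coefficients via Lemma \ref{lem:secondcoeff} and \eqref{eqn:F_qexp}.

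\textbf{Repairing your direct computation.} What you need is that $E_4^2X_{w-4,2}-E_6X_{w-2,2}$ lies in the span of $E_4X_{w,2}$ and $X_{w+4,2}$. This follows by applying $\partial_w$ to \eqref{eqn:d2eq4}$_w$, then eliminating $\partial_{w-4}X_{w-2,2}$ and $\partial_wX_{w+2,2}$ using \eqref{eqn:d2eq1}$_{w-4}$, \eqref{eqn:d2eq1}$_w$ and \eqref{eqn:d2eq2}.

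\textbf{Repairing your MLDE fallback.} Set $c_{w+4}=\frac{256(w+1)(w+2)(w+3)}{w(w+4)^2}$.
\begin{itemize}
\item By \eqref{eqn:d2eq1}$_w$, $E_4X_{w,2}-c_{w+4}X_{w+4,2}=\frac{48}{w(w+2)}L^{0}_{2,w-2}X_{w,2}$.
\item Lemma \ref{lem:intertwine} with $k=w-2$, $(\alpha,\beta,\gamma)=(\frac w4,0,0)$ and $(\alpha',\beta',\gamma')=(0,0,\frac{w-1}{6})$ satisfies all four constraints. It gives $L^{(w/4,0)}_{3,w+2}L^{0}_{2,w-2}=L^{(w-1)/6}_{2,w+4}L_{3,w-2}$, so the right-hand side is annihilated by \eqref{eqn:poseigde}$_{w+4}$.
\item The indicial polynomial of $L^{(w/4,0)}_{3,w+2}$ is $n(n-1)(n-\frac w4)$. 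Hence a solution of the form $q^{w/4}+O(q^{w/4+1})$ is unique, and the right-hand side equals $F_{w+4}$ by comparing leading coefficients alone.
\end{itemize}
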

\begin{proof}
\eqref{eqn:poseigeq4}${}_{w}$ follows from \eqref{eqn:poseigeq3}${}_{w}$ by taking $\partial_{w-2}$ on both sides of \eqref{eqn:poseigeq3}${}_{w}$ and simplifying with \eqref{eqn:poseigeq2}${}_{w-4}$ and \eqref{eqn:poseigeq2}${}_{w}$.
Hence it is enough to prove \eqref{eqn:poseigeq3}.
First, both $F_{w}$ and $X_{w-4, 2}$ are normalized and have vanishing order $\frac{w}{4} - 1$ at the cusp.
Hence $E_{4} X_{w-4, 2} - F_{w}$ is a weight-$w$, depth 2 quasimodular form with vanishing order $\frac{w}{4}$ at the cusp; that is, it is an extremal quasimodular form.
By uniqueness \cite{pellarin2020extremal}, it is a constant multiple of $X_{w, 2}$; hence there exists a constant $c_{w}$ such that $E_{4} X_{w-4, 2} - F_{w} = c_{w}X_{w, 2}$.
To compute the constant explicitly, we compare the second nonzero Fourier coefficients of $F_{w}$ and $X_{w-4, 2}$: by \eqref{eqn:ab_notation},
\begin{align*}
    E_{4}X_{w-4, 2} - F_{w} &= \left(1 + 240q + \cdots \right)\left(q^{\frac{w}{4} - 1} + a_{\frac{w}{4}}^{(w-4)} q^{\frac{w}{4}} + \cdots\right) - \left(q^{ \frac{w}{4}- 1} + b_{\frac{w}{4}}^{(w)}q^{\frac{w}{4}} + \cdots\right) \\
    &=\left(240 + a_{\frac{w}{4}}^{(w-4)} - b_{\frac{w}{4}}^{(w)}\right)q^{\frac{w}{4}} + \cdots,
\end{align*}
so that $c_{w} = 240 + a_{\frac{w}{4}}^{(w-4)} - b_{\frac{w}{4}}^{(w)}$.
Using the formula for $b_{\frac{w}{4}}^{(w)}$ from \eqref{eqn:F_qexp} and Lemma \ref{lem:secondcoeff},
\begin{align*}
c_{w} &= 240 + a_{\frac{w}{4}}^{(w-4)} - b_{\frac{w}{4}}^{(w)} \\
&= \frac{240w^2(w-4) + 2(w-4)^{2}(w^2 + 4w - 48) - 2w(w^3 - 12w^2 + 224w - 960)}{(w-4)w^2} \\
&= \frac{256(w-3)(w-2)(w-1)}{(w-4)w^2}
\end{align*}
and this completes the proof of \eqref{eqn:poseigeq3}${}_{w}$.
\end{proof}

We can now prove the positivity of $F_w$ for all even weights in the family.

\begin{theorem}
\label{thm:poseigpos}
For every even $w\geq 8$, $F_w \in \QM_{w}^{2, +}$.
\end{theorem}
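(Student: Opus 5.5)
We will prove positivity of $F_w$ separately for $w \equiv 2 \pmod 4$ and $w \equiv 0 \pmod 4$, using Proposition \ref{prop:ext2poseig} to transfer positivity from the depth-$2$ extremal forms (Theorem \ref{thm:kkd2weak}). The small weights are handled directly. For $w \in \{8,10\}$, we will check positivity from the explicit formulas. One option is to note that $F_8$ and $F_{10}$ have nonnegative Fourier coefficients, since they are (up to scaling) classical extremal-type forms with known completely positive expansions. Alternatively, we can write $1728F_8 = (E_2E_4 - E_6)^2/E_4 + (E_4^3-E_6^2)/E_4$, and the analogous decomposition for $F_{10}$ can be combined with $\partial_6 F_8 = \tfrac{1}{2}F_{10}$ from \eqref{eqn:poseigeq2}.

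\textbf{Case $w \equiv 2 \pmod 4$, $w \ge 14$.} Write the weight as $w+2$ with $w \equiv 0 \pmod 4$ and $w \ge 12$, and use \eqref{eqn:poseigeq4}:
\[
F_{w+2} = \tfrac{2(w-3)}{3(w-4)} E_4 X_{w-2,2} + \tfrac{w-6}{3(w-4)} E_6 X_{w-4,2}.
\]
\begin{itemize}
\item For $t \ge 1$, both terms are nonnegative and the first is positive, since $E_4(it)>0$, $E_6(it)\ge 0$, and $X_{w-2,2}, X_{w-4,2}$ are positive by Theorem \ref{thm:kkd2weak}. (The weight $w-2=10$ is allowed; only weight $6$ is excluded, and it does not occur here.)
\item For $0<t<1$, we have $E_6(it)<0$, so this formula alone is insufficient. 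There we use Proposition \ref{prop:derpos}(4) instead. By \eqref{eqn:poseigeq2}, $\partial_{w}F_{w+2} = \tfrac{w-3}{6}F_{w+4}$, and the plan is to derive positivity of $F_{w+2}$ on $(0,1]$ from positivity of $F_{w+4}$.
\end{itemize}
So we first need $F_{w+4}$ positive everywhere, with $w+4 \equiv 0 \pmod 4$.

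\textbf{Case $w \equiv 0 \pmod 4$.} Here \eqref{eqn:poseigeq3} contains a negative multiple of $X_{w,2}$, so it cannot be used directly. Instead we apply Proposition \ref{prop:derpos}(4) in the other direction, via $\partial_{w-2}F_w = \tfrac{w-5}{6}F_{w+2}$: if $F_{w+2}$ is positive on all of $(0,\infty)$ and $F_w(it)>0$ for large $t$ (true, since the leading coefficient is $1$), then $F_w$ is positive. This sets up a circular dependency between the two cases, which we break using \eqref{eqn:poseigeq1}:
\[
F_{w+2} = \tfrac{(w-8)(w-4)}{768(w-7)(w-5)}\bigl(E_4F_{w-2} - E_6F_{w-4}\bigr).
\]
For $0<t\le 1$ we have $-E_6(it)\ge 0$ and $E_4(it)>0$, so $F_{w+2}(it)>0$ provided $F_{w-2}$ and $F_{w-4}$ are positive there.

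The induction will therefore run on multiples $w$ of $4$, proving simultaneously that $F_{w-4}$ and $F_{w-2}$ are positive on all of $(0,\infty)$. The step goes as follows.
\begin{enumerate}
\item On $t\ge1$, $F_{w+2}>0$ by \eqref{eqn:poseigeq4}.
\item On $0<t\le1$, $F_{w+2}>0$ by \eqref{eqn:poseigeq1} and the induction hypothesis. The point $t=1$ is covered by both arguments.
\item Hence $F_{w+2}$ is positive on $(0,\infty)$.
\item Proposition \ref{prop:derpos}(4), applied with $\partial_{w-2}F_w = \tfrac{w-5}{6}F_{w+2}$ and $F_w(it)>0$ for large $t$, gives positivity of $F_w$.
\end{enumerate}
This mirrors the proof of Theorem \ref{thm:kkd2weak}. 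The coefficient in \eqref{eqn:poseigeq1} is positive for $w\ge 12$.

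The base cases are $F_8$ and $F_{10}$ (for the step at $w=12$), together with $F_{12}$. Positivity of $F_{12} = (E_2E_4 - E_6)^2/518400 = (3E_4')^2/518400$ is immediate, since $E_4'(it)$ is real and nonzero. The main obstacle is this base-case verification for $F_8$ and $F_{10}$ on the whole imaginary axis, especially $F_{10}$. For $F_{10}$ we will first try to show its $q$-coefficients are nonnegative. Failing that, we will apply \eqref{eqn:poseigeq1} at $w=8$ if it can be checked directly, or verify the explicit polynomial in $E_2,E_4,E_6$ on $t\le 1$ and $t\ge 1$ separately, using the signs of $E_6$ and $E_2E_4-E_6=3E_4'$ together with $E_2E_6-E_4^2=2E_6'$.
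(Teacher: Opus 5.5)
Your induction step is the paper's argument. You get $F_{w+2}>0$ on $t\ge1$ from \eqref{eqn:poseigeq4} and Theorem~\ref{thm:kkd2weak}, and on $0<t\le1$ from \eqref{eqn:poseigeq1} and the hypothesis on $F_{w-2},F_{w-4}$. Positivity of $F_w$ then follows from \eqref{eqn:poseigeq2} and Proposition~\ref{prop:derpos}~(4). Listing $F_{12}$ as a base case is unnecessary, since the step at $w=12$ produces it, though your formula $F_{12}=(3E_4')^2/518400$ is correct. Your argument for $F_8$ is also valid: $1728F_8=(E_2E_4-E_6)^2/E_4+1728\Delta/E_4$ with $E_4(it)>0$ and $\Delta(it)>0$.

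The gap is the base case $F_{10}$. It is part of the statement, and it is needed in \eqref{eqn:poseigeq1}$_{12}$ to start the induction, yet none of your options settles it:
\begin{enumerate}
\item Equation \eqref{eqn:poseigeq1} at $w=8$ has prefactor $(w-8)(w-4)/(768(w-7)(w-5))=0$ and involves $F_6,F_4$, which are not defined.
\item The relation $\partial_6F_8=\frac12F_{10}$ transfers positivity in the wrong direction. Proposition~\ref{prop:derpos}~(4) deduces positivity of $F$ from that of $\partial_kF$, not conversely.
\item $F_8$ and $F_{10}$ are not extremal forms. They vanish only to order $1$ at the cusp, whereas $X_{8,2}$ and $X_{10,2}$ vanish to order $2$, so no extremal-form positivity result applies to them.
\item A sign analysis of $1728F_{10}=3E_4E_4'-2E_2E_6'$ handles $t\ge1$, where $E_2(it)>0$. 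It does not settle small $t$, where $E_2(it)<0$.
\end{enumerate}
The missing observation is that Ramanujan's identities give $F_8=E_4''/240=\sum_{n\ge1}n^2\sigma_3(n)q^n$ and $F_{10}=-E_6''/504=\sum_{n\ge1}n^2\sigma_5(n)q^n$. Both are therefore completely positive, hence positive. With this base case your argument is complete.
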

\begin{proof}
We prove simultaneously by induction on multiples $w \geq 8$ of $4$ that $F_w$ and $F_{w+2}$ are positive.
Ramanujan's identities give
\[
    F_8 = \frac{E_4''}{240} = \sum_{n \ge 1} n^2 \sigma_3(n) q^n,
    \qquad
    F_{10} = -\frac{E_6''}{504} = \sum_{n \ge 1} n^2 \sigma_5(n) q^n.
\]
Thus $F_8$ and $F_{10}$ are completely positive, and hence positive.
For $w \geq 12$, assume inductively that $F_{w-4}$ and $F_{w-2}$ are positive.
The scalar prefactor in \eqref{eqn:poseigeq1}${}_{w}$ is positive. Hence the inequalities $E_4(it) > 0$ and $E_6(it) \leq 0$ imply that $F_{w+2}(it) > 0$ for $0 < t \leq 1$; strictness follows already from the term $E_4(it)F_{w-2}(it)$.
For $t>1$, both scalar coefficients in \eqref{eqn:poseigeq4}${}_{w}$ are positive, so Theorem \ref{thm:kkd2weak} and the inequalities $E_4(it)>0$ and $E_6(it)>0$ likewise give $F_{w+2}(it)>0$.
Thus $F_{w+2}$ is positive. Equation \eqref{eqn:poseigeq2}${}_{w}$ and Proposition \ref{prop:derpos} (4) then imply that $F_w$ is positive, completing the simultaneous induction.
\end{proof}

\begin{remark}
For $w\geq12$ with $w\equiv 0\pmod{4}$, we also have the following relation between $F_{w+2}$, $X_{w+2, 2}$, and $X_{w-2, 2}$, similar to \eqref{eqn:poseigeq3}:
\begin{equation}
\label{eqn:poseigeq5}
F_{w+2} = -\frac{256(w-6)(w-1)(w+1)}{(w-4)w^2} X_{w+2, 2} + E_4 X_{w-2, 2},
\end{equation}
which can be proved similarly.
\end{remark}

\subsubsection{Nonpositivity of $M_{d,+}(\mathbf{0})$}
\label{subsec:nonposorig_pos}

Write $F_w = A_w + E_2 B_{w - 2} + E_2^2 C_{w-4}$ for modular forms $A_w, B_{w-2}, C_{w-4}$ of weight $w, w-2, w-4$ respectively.
Since $w \equiv 0 \pmod{4}$,
\begin{align*}
    F_w\left(\frac{i}{t}\right) &= A_w\left(\frac{i}{t}\right) + E_2\left(\frac{i}{t}\right) B_{w-2}\left(\frac{i}{t}\right) + E_2^2\left(\frac{i}{t}\right) C_{w-4}\left(\frac{i}{t}\right) \\
    &= t^{w} A_w(it) - \left(-t^2 E_2(it) + \frac{6t}{\pi}\right) t^{w-2} B_{w-2}(it) + \left(t^4 E_2^2(it) - \frac{12t^3}{\pi} E_2(it) + \frac{36t^2}{\pi^2}\right) t^{w-4} C_{w-4}(it) \\
    &= t^w F_w(it) - \frac{6t^{w-1}}{\pi} (B_{w-2}(it) + 2 E_2(it) C_{w-4}(it)) + \frac{36t^{w-2}}{\pi^2} C_{w-4}(it)
\end{align*}
and the integrand of \eqref{eqn:Mdpos} can be expressed as
\begin{equation}
    \label{eqn:phi}
    \frac{t^{2-w} F_w\left(i / t\right)}{\Delta(it)^{n_{d,+}}} = t^2 \frac{F_w(it)}{\Delta(it)^{n_{d,+}}} - \frac{6t}{\pi} \frac{B_{w-2}(it) + 2 E_2(it) C_{w-4}(it)}{\Delta(it)^{n_{d,+}}} + \frac{36}{\pi^2} \frac{C_{w-4}(it)}{\Delta(it)^{n_{d,+}}}.
\end{equation}
Set
\begin{equation}
    \label{eqn:Ftilde}
    \widetilde{F}_{w-2} := B_{w-2} + 2 E_2 C_{w-4}.
\end{equation}
This is a quasimodular form of weight $w-2$ and depth 1, and it is a cusp form. Indeed, if we initially write $\widetilde{F}_{w-2}=\sum_{n\geq 0}\tilde a_{n,+}^{(w-2)}q^n$, then taking constant terms in Lemma \ref{lem:Ftilde_rec} below shows that $\tilde{a}_{0,+}^{(w+2)}$ is a multiple of $\tilde{a}_{0,+}^{(w-2)}$.
Since $\wtilde{F}_6 = \frac{5}{6} X_{6,1}$ is a cusp form, we conclude that $\tilde{a}_{0,+}^{(w-2)} = 0$ for all $w \equiv 0 \pmod{4}$, hence
\[
    \widetilde{F}_{w-2}=\sum_{n\geq 1}\tilde a_{n,+}^{(w-2)}q^n.
\]
Therefore the Fourier expansions of the three quotients in \eqref{eqn:phi} have the form
\begin{equation}
    \label{eqn:qexp_integrand}
    \frac{F_{w}}{\Delta^{n_{d,+}}} = \sum_{n \geq \frac{w}{4}-1-n_{d,+}} a_{n,+} q^n, \quad
    \frac{\widetilde F_{w-2}}{\Delta^{n_{d,+}}} = \sum_{n \ge 1-n_{d,+}} b_{n,+} q^n, \quad \frac{C_{w-4}}{\Delta^{n_{d,+}}} = \sum_{n \ge - n_{d,+}} c_{n,+} q^n.
\end{equation}
Since $\frac{w}{4}-1-n_{d,+} > 0$, the integral over the first term in \eqref{eqn:phi} converges for all $\bx \in \bR^d$.
However, there are poles from the other two terms, which make the integral \eqref{eqn:Mdpos} diverge near the origin.
Fortunately, we can analytically continue the integral \eqref{eqn:Mdpos} to the origin by following the argument in \cite[Proposition 2.1]{feigenbaum2021eigenfunctions}, which originates in \cite{viazovska2017sphere,cohn2017sphere}.
More precisely, let $\phi(t)$ be \eqref{eqn:phi} and define the truncation $\widetilde{\phi}(t)$ as
\begin{equation}
    \label{eqn:phitilde}
    \widetilde{\phi}(t) := \phi(t) - \left[-\frac{6t}{\pi} \sum_{1-n_{d,+} \le k \le 0} b_{k,+} e^{-2 \pi k t} + \frac{36}{\pi^2} \sum_{-n_{d,+} \le k \le 0} c_{k,+} e^{-2 \pi k t}\right]
\end{equation}
which has exponential decay as $t \to \infty$.
Then \eqref{eqn:Mdpos} can be rewritten as
\begin{align*}
    M_{d,+}(\bx) &= 4 \sin^{2} \left(\frac{\pi \|\mathbf{x}\|^{2}}{2}\right) \left[\int_{0}^{\infty} \widetilde{\phi}(t) e^{-\pi \|\mathbf{x}\|^{2} t} \dd t \right.
    \\
    &\left.\quad - \sum_{1-n_{d,+} \le k \le 0} \frac{6 b_{k,+}}{\pi} \int_{0}^{\infty} t e^{-\pi \|\mathbf{x}\|^{2} t} e^{-2 \pi k t} \dd t + \sum_{-n_{d,+} \le k \le 0} \frac{36 c_{k,+}}{\pi^2} \int_{0}^{\infty} e^{-\pi \|\mathbf{x}\|^{2} t} e^{-2 \pi k t} \dd t \right] \\
    &= 4 \sin^2\left(\frac{\pi \|\bx\|^2}{2}\right)\int_0^{\infty} \widetilde{\phi}(t) e^{-\pi \|\bx\|^2 t} \dd t \\
    &\quad + 4 \sin^2\left(\frac{\pi \|\bx\|^2}{2}\right) \left[ - \frac{6}{\pi}\sum_{1-n_{d,+} \le k \le 0} \frac{b_{k,+}}{\pi^2 (\|\bx\|^2 + 2k)^2} + \frac{36}{\pi^2} \sum_{-n_{d,+} \le k \le 0} \frac{c_{k,+}}{\pi (\|\bx\|^2 + 2 k)}\right]
\end{align*}
which converges for $\|\bx\|^2 > 2n_{d,+}$ and analytically continues to $\bx = \mathbf{0}$.
The only term contributing to the value at the origin is $b_{0,+}$, which gives
\begin{equation}
    \label{eqn:Mpos_orig}
    M_{d,+}(\mathbf{0}) = -\frac{6}{\pi} \cdot 4 \cdot \frac{\pi^2}{4} \cdot \frac{b_{0,+}}{\pi^2} = - \frac{6 b_{0,+}}{\pi}.
\end{equation}

From \eqref{eqn:Mpos_orig}, we have $M_{d,+}(\mathbf{0}) \leq 0$ if and only if $b_{0,+} \geq 0$.
Now, for each integer $k \ge 0$, define $p_k(n)$ by
\begin{equation}
    \label{eqn:pkn}
    \frac{1}{\prod_{m \ge 1} (1 - q^m)^k} = \sum_{n \ge 0} p_k(n) q^n.
\end{equation}
Thus $p_1(n) = p(n)$ is the number of partitions of $n$; moreover, $\Delta^{-m} = q^{-m} \sum_{n \geq 0} p_{24m}(n) q^n$.
By \eqref{eqn:qexp_integrand} and \eqref{eqn:Ftilde}, we can write $b_{0,+}$ as
\begin{equation}
    \label{eqn:bzero_pos}
    b_{0,+} = [q^{n_{d,+}}] \left(\sum_{n \geq 0} p_{24 n_{d,+}}(n) q^n\right) \left(\sum_{n \geq 1} \tilde{a}_{n,+}^{(w-2)} q^n \right) = \sum_{j=1}^{n_{d,+}} p_{24 n_{d,+}}(n_{d,+} - j) \tilde{a}_{j,+}^{(w-2)}.
\end{equation}
Since $p_k(n) > 0$ for all $k \geq 1$ and $n \geq 0$, it is enough to show that
\begin{equation}
    \tilde{a}_{j,+}^{(w-2)} > 0 \quad \text{for all}\quad 1 \le j \le n_{d,+}
\end{equation}
to prove that $b_{0,+}>0$.

For a quasimodular form $G = \sum_{j=0}^{r} E_2^j G_{j}$, define $\delta G$ as
\begin{equation}
    \label{eqn:delta}
    \delta G = \frac{\partial G}{\partial E_2} := \sum_{j=1}^{r} j E_2^{j-1} G_j.
\end{equation}
If $G$ has weight $w$ and depth $r \geq 1$, then $\delta G$ has weight $w - 2$ and depth $r - 1$.
Also, $\delta E_2 = 1$, and $\delta G = 0$ if and only if $G$ is a modular form (i.e., has depth zero).
$\delta$ satisfies the Leibniz rule, i.e. $\delta(G_1 G_2) = (\delta G_1) G_2 + G_1 (\delta G_2)$ for any quasimodular forms $G_1, G_2$.
Also, by definition, we have $\delta F_w = B_{w-2} + 2 E_2 C_{w-4} = \widetilde{F}_{w-2}$.
The following lemma gives a relation between $\delta$ and the Serre derivative $\partial_k$.

\begin{lemma}
    \label{lem:delta_serre}
    Let $G$ be a quasimodular form of homogeneous weight $w$.
    Then we have
    \begin{equation}
        \label{eqn:delta_serre}
        \delta(\partial_k G) = \partial_k(\delta G) + \frac{w-k}{12} G.
    \end{equation}
\end{lemma}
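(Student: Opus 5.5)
Since both sides of \eqref{eqn:delta_serre} are linear in $G$, it suffices to verify the identity on a spanning set. I will treat $\delta$ as the formal derivation $\partial/\partial E_2$ on the polynomial ring $\bC[E_2,E_4,E_6]$. The plan is to compute the commutator $[\delta, D]$, then $[\delta, \partial_k]$, and finally specialize to homogeneous weight $w$.

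\emph{Step 1: the commutator with $D$.} By Ramanujan's identities \eqref{eqn:ramanujan}, $D$ is the derivation
\[
D = \frac{E_2^2-E_4}{12}\frac{\partial}{\partial E_2} + \frac{E_2E_4-E_6}{3}\frac{\partial}{\partial E_4} + \frac{E_2E_6-E_4^2}{2}\frac{\partial}{\partial E_6}.
\]
The commutator of two derivations is again a derivation. Here $[\delta, D]$ is the derivation obtained by applying $\delta$ to the coefficients of $D$:
\[
[\delta,D] = \frac{E_2}{6}\frac{\partial}{\partial E_2} + \frac{E_4}{3}\frac{\partial}{\partial E_4} + \frac{E_6}{2}\frac{\partial}{\partial E_6}.
\]
This is $\tfrac{1}{12}$ times the weight (Euler) operator. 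Hence on a homogeneous form of weight $w$ we get $[\delta,D]G = \frac{w}{12}G$. The only thing to check is that $\delta$ commutes with $\partial/\partial E_4$ and $\partial/\partial E_6$, which is clear because $E_2,E_4,E_6$ are algebraically independent.

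\emph{Step 2: the commutator with the Serre derivative.} Write $\partial_k = D - \frac{k}{12}E_2$. By the Leibniz rule, $\delta(E_2 G) = G + E_2\,\delta G$. Combining this with Step 1 gives
\[
\delta\partial_k G = \partial_k \delta G + \frac{w}{12}G - \frac{k}{12}G,
\]
which is \eqref{eqn:delta_serre}.

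The only potential subtlety is Step 1, namely justifying that $\delta$ is well defined as a partial derivative; this rests on the polynomial-ring structure of quasimodular forms cited earlier. As a safer alternative to the Euler-operator argument, one can check the commutator on the generators $E_2,E_4,E_6$ directly:
\begin{itemize}
\item $\delta D E_2 = \frac{E_2}{6} = \frac{2}{12}E_2$,
\item $\delta D E_4 = \frac{E_4}{3}$,
\item $\delta D E_6 = \frac{E_6}{2}$,
\end{itemize}
while $D\delta$ vanishes on $E_4$ and $E_6$ and sends $E_2$ to $0$. One then extends to all of $\bC[E_2,E_4,E_6]$ by observing that $G\mapsto [\delta,D]G - \frac{w}{12}G$ is additive and satisfies the Leibniz rule with respect to weight; that is, both operators are derivations agreeing on the generators.
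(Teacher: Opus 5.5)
Your proof is correct and follows the same route as the paper. The paper cites the commutator identity $D(\delta G)-\delta(DG)=-\frac{w}{12}G$ from the Kaneko--Koike $\mathfrak{sl}_2$-triple, whereas you derive it directly by applying $\delta$ to the coefficients of $D$ to get $\frac{1}{12}$ times the Euler operator; both arguments then finish identically by writing $D=\partial_k+\frac{k}{12}E_2$ and using $\delta(E_2G)=G+E_2\,\delta G$.
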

\begin{proof}
    It is known that the three operators $D = q \frac{\dd}{\dd q}$, $\partial = -12\delta$, and $H$, acting by the weight, form an $\mathfrak{sl}_2$-triple \cite[p. 468]{kaneko2006extremal}.
    In particular, we have $[D, \partial] = H$, or equivalently,
    \begin{equation}
        \label{eqn:D_delta}
        D(\delta G) - \delta(DG) = -\frac{w}{12} G
    \end{equation}
    for a quasimodular form $G$ of weight $w$.
    By $D = \partial_k + \frac{k}{12} E_2$, we have
    \begin{align*}
        D(\delta G) &= \partial_k (\delta G) + \frac{k}{12} E_2(\delta G) \\
        \delta(DG) &= \delta(\partial_k G) + \frac{k}{12} \delta(E_2 G) = \delta(\partial_k G) + \frac{k}{12} E_2(\delta G) + \frac{k}{12} G
    \end{align*}
    and plugging these into \eqref{eqn:D_delta} gives \eqref{eqn:delta_serre}.
\end{proof}

\begin{lemma}
    \label{lem:Ftilde_rec}
    For $w \geq 8$ with $w \equiv 0 \pmod{4}$, we have
    \begin{equation}
        \label{eqn:Ftilde_rec}
        \widetilde{F}_{w+2} = \frac{3(w-4)w}{16(w-10)(w-5)(w-3)(w+2)} \left(\mathcal{S}_w \widetilde{F}_{w-2} - \frac{1}{3} \partial_{w-1} F_{w}\right)
    \end{equation}
    where
    \begin{equation}
        \label{eqn:S_w}
        \mathcal{S}_w = \frac{(w-6)(w-5)}{36} E_4 - \partial_w \partial_{w-2} = -L_{2,w-2}^{-(w-10)(w-4)/48}
    \end{equation}
\end{lemma}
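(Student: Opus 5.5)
Apply $\delta$ to both sides of the recurrence \eqref{eqn:poseig4}${}_{w}$ and simplify with Lemma \ref{lem:delta_serre}. By definition $\widetilde{F}_{w+2} = \delta F_{w+4}$ and $\widetilde{F}_{w-2} = \delta F_w$, so the left-hand side is immediate. On the right-hand side, $E_4$ is modular, so $\delta(E_4 F_w) = E_4 \widetilde{F}_{w-2}$ by the Leibniz rule. It then remains to compute $\delta(\partial_{w-2}^2 F_w) = \delta(\partial_w \partial_{w-2} F_w)$.

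Apply Lemma \ref{lem:delta_serre} twice. The outer operator acts on $G = \partial_{w-2}F_w$, which has weight $w+2$, with $k = w$:
\[
\delta(\partial_w G) = \partial_w(\delta G) + \tfrac{2}{12} G = \partial_w(\delta G) + \tfrac16 \partial_{w-2}F_w .
\]
The inner operator acts on $F_w$ (weight $w$, $k = w-2$):
\[
\delta(\partial_{w-2}F_w) = \partial_{w-2}\widetilde{F}_{w-2} + \tfrac16 F_w .
\]
Combining the two,
\[
\delta(\partial_w\partial_{w-2}F_w) = \partial_w\partial_{w-2}\widetilde{F}_{w-2} + \tfrac16\partial_w F_w + \tfrac16\partial_{w-2}F_w .
\]
The term $\partial_w F_w$ should be read via the identity $\partial_w = \partial_{w-2} - \tfrac{2}{12}E_2$. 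This mixes in an $E_2F_w$ term, so it is cleaner to rewrite everything with a single operator of the form $\partial_{w-1}$:
\[
\partial_w F_w + \partial_{w-2}F_w = 2F_w' - \tfrac{2w-2}{12}E_2F_w = 2\,\partial_{w-1}F_w .
\]
Hence
\[
\delta(\partial_w\partial_{w-2}F_w) = \partial_w\partial_{w-2}\widetilde{F}_{w-2} + \tfrac13 \partial_{w-1}F_w .
\]
Substituting this back gives
\[
\delta\!\left(\tfrac{(w-6)(w-5)}{36}E_4F_w - \partial_{w-2}^2F_w\right) = \mathcal{S}_w\widetilde{F}_{w-2} - \tfrac13\partial_{w-1}F_w ,
\]
and multiplying by the scalar prefactor of \eqref{eqn:poseig4} yields \eqref{eqn:Ftilde_rec}.

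For the second expression of $\mathcal{S}_w$ in \eqref{eqn:S_w}, note that $\partial_w\partial_{w-2} = \partial_{w-2}^2$ in the paper's notation. From \eqref{eqn:KZ2_def}, $L_{2,w-2}^{\alpha} = \partial_{w-2}^2 - \left(\tfrac{(w-2)w}{144} - \alpha\right)E_4$. Setting
\[
\tfrac{(w-2)w}{144} - \alpha = \tfrac{(w-6)(w-5)}{36}
\]
gives
\[
\alpha = \frac{w^2 - 2w - 4(w^2 - 11w + 30)}{144} = \frac{-3w^2 + 42w - 120}{144} = -\frac{(w-10)(w-4)}{48},
\]
as claimed.

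The only delicate point is sign and index bookkeeping in the double use of Lemma \ref{lem:delta_serre}: the weights must be $w$ for the inner step and $w+2$ for the outer. The denominators in the prefactor vanish at $w = 10$, where \eqref{eqn:poseig4} itself is meaningless, so $w = 8$ and $w = 12$ may need a direct check against the explicit forms. I expect no real obstacle beyond this.
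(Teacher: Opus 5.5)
Your proposal is correct and matches the paper's proof: you apply $\delta$ to \eqref{eqn:poseig4}${}_{w}$ and use Lemma \ref{lem:delta_serre} twice to get $\frac16\partial_w F_w+\frac16\partial_{w-2}F_w=\frac13\partial_{w-1}F_w$, and you also verify the identification $\mathcal{S}_w=-L_{2,w-2}^{-(w-10)(w-4)/48}$, which the paper leaves implicit. Your closing worry is unnecessary: $w\equiv 0 \pmod 4$ never equals $10$, $5$, $3$, or $-2$, so the prefactor is defined for every $w\ge 8$ in range, and no separate checks at $w=8,12$ are needed.
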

\begin{proof}
    By taking $\delta$ on both sides of \eqref{eqn:poseig4}$_{w}$, we have
    \begin{align*}
    \widetilde{F}_{w+2} &= \frac{3(w-4)w}{16(w-10)(w-5)(w-3)(w+2)} \left(\frac{(w-6)(w-5)}{36} \delta(E_4 F_w) - \delta (\partial_w \partial_{w-2} F_w)\right) \\
    &= \frac{3(w-4)w}{16(w-10)(w-5)(w-3)(w+2)} \left(\frac{(w-6)(w-5)}{36} E_4 (\delta F_w) - \partial_w(\delta (\partial_{w-2} F_w)) - \frac{1}{6} \partial_{w-2} F_w\right) \\
    &= \frac{3(w-4)w}{16(w-10)(w-5)(w-3)(w+2)} \left(\frac{(w-6)(w-5)}{36} E_4 (\delta F_w) - \partial_w(\partial_{w-2} (\delta F_w)) - \frac{1}{6} \partial_w F_w - \frac{1}{6} \partial_{w-2} F_w\right) \\
    &= \frac{3(w-4)w}{16(w-10)(w-5)(w-3)(w+2)} \left(\frac{(w-6)(w-5)}{36} E_4 \widetilde{F}_{w-2} - \partial_w(\partial_{w-2} \widetilde{F}_{w-2}) - \frac{1}{3} \partial_{w-1} F_w\right) \\
    &= \frac{3(w-4)w}{16(w-10)(w-5)(w-3)(w+2)} \left(\mathcal{S}_w \widetilde{F}_{w-2} - \frac{1}{3} \partial_{w-1} F_{w}\right).
    \end{align*}
\end{proof}

\begin{proposition}
    \label{prop:Ftilde_pos}
    For $w \equiv 0 \pmod{4}$ and $w \ge 12$, we have 
    \begin{equation}
        \tilde{a}_{j,+}^{(w-2)} > 0 \quad \text{for all} \quad 1 \le j \le \frac{w}{4} - 3, \quad \tilde{a}_{\frac{w}{4}-2,+}^{(w-2)} \ge \frac{1}{360}.
    \end{equation}
\end{proposition}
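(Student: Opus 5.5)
We prove the statement by induction on $w \equiv 0 \pmod 4$, using the coefficient recurrence obtained from Lemma \ref{lem:Ftilde_rec} together with Lemma \ref{lem:KZ2_coeff}.

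\textbf{Coefficient recurrence.} Since $\mathcal{S}_w = -L_{2,w-2}^{\alpha}$ with $\alpha = -(w-10)(w-4)/48$, Lemma \ref{lem:KZ2_coeff} gives
\[
[q^n]\,\mathcal{S}_w\widetilde{F}_{w-2} = -\kappa_{2,w-2}^{\alpha}(n)\,\tilde a_{n,+}^{(w-2)} - \sum_{j<n} K_{2,w-2}^{\alpha}(n,j)\,\tilde a_{j,+}^{(w-2)}.
\]
We have
\[
\kappa_{2,w-2}^{\alpha}(n) = n^2 - \tfrac{w-1}{6}n - \tfrac{(w-10)(w-4)}{48}.
\]
A direct check shows this is negative for $1 \le n \le \frac{w}{4}+1$, since its positive root exceeds $\frac{w}{4}+1$. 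Hence $-\kappa_{2,w-2}^{\alpha}(n) > 0$ in this range.

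For the off-diagonal terms,
\[
-K_{2,w-2}^{\alpha}(n,j) = -2(w-1)\bigl(2j-(w-2)(n-j)\bigr)\sigma_1(n-j) + 5(w-10)(w-4)\,\sigma_3(n-j).
\]
For $j < n \le \frac{w}{4}$ we have $2j \le \frac{w}{2} < (w-2)(n-j)$, so the first summand is positive. For $w \ge 12$ the second summand is also nonnegative. So $-K > 0$ for all $j<n$ in the relevant range.

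The remaining term is $-\frac13\partial_{w-1}F_w$. Its $n$-th coefficient is
\[
-\tfrac13\Bigl(n\,b_n^{(w)} - \tfrac{w-1}{12}\,[q^n](E_2F_w)\Bigr).
\]
Since $F_w$ vanishes to order $\frac{w}{4}-1$, this term is zero for $n < \frac{w}{4}-1$.

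\textbf{Induction for the low coefficients.} Assume positivity of $\tilde a_{j,+}^{(w-2)}$ for $j \le \frac{w}{4}-2$. Then for $1 \le n \le \frac{w}{4}-2$, every contribution to $\tilde a_{n,+}^{(w+2)}$ is nonnegative and the diagonal term is strictly positive. The prefactor $\frac{3(w-4)w}{16(w-10)(w-5)(w-3)(w+2)}$ is positive for $w \ge 12$, so $\tilde a_{n,+}^{(w+2)} > 0$ in this range. This covers the strict inequalities for $j \le \frac{w+4}{4}-3 = \frac{w}{4}-2$.

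\textbf{Boundary coefficients.} The new indices are $n = \frac{w}{4}-1$ and $n = \frac{w}{4}$, where we must show $\tilde a_{\frac{w}{4}-1,+}^{(w+2)} \ge \frac{1}{360}$. This is the delicate step, because the $F_w$ term now contributes.

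At $n = \frac{w}{4}-1$, the coefficient of $\partial_{w-1}F_w$ is
\[
\bigl(\tfrac{w}{4}-1\bigr) - \tfrac{w-1}{12} = \tfrac{w-6}{6} > 0,
\]
so its contribution $-\frac13\cdot\frac{w-6}{6}$ is negative.

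I would try to offset this negative term with the single diagonal contribution $-\kappa(n)\,\tilde a_{n,+}^{(w-2)}$ at $n = \frac{w}{4}-1$, together with the off-diagonal term coming from $j = \frac{w}{4}-2$. That term has size roughly $w^2\,\tilde a_{\frac w4-2,+}^{(w-2)} \ge w^2/360$, and the inductive lower bound $\frac{1}{360}$ is presumably tuned to make this estimate close. Both $-\kappa$ and $-K$ are of order $w^2$, which dominates $\frac{w-6}{18}$ after multiplying by the prefactor of order $w^{-2}$.

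To keep the bound uniform, the induction hypothesis should be strengthened to track a lower bound such as $\tilde a^{(w-2)}_{\frac w4-2,+} \ge c$ with an explicit $c$, and also nonnegativity at index $\frac w4 - 1$. The base cases $w = 12, 16$ are checked directly from $\widetilde F_6 = \frac56 X_{6,1}$ and the explicit $F_8, F_{12}$.

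The main obstacle is exactly this bookkeeping at the top index, where a constant such as $\frac1{360}$ must survive every step. If the crude estimate fails for small $w$, I would verify those finitely many cases numerically and use asymptotic inequalities for large $w$.
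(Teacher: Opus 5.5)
Your skeleton matches the paper's: induction on $w$, coefficients of \eqref{eqn:Ftilde_rec} computed via Lemma \ref{lem:KZ2_coeff}, and all contributions of one sign for $1\le n\le\frac w4-2$. But the boundary step, which is the actual content, rests on a false claim. The diagonal coefficient factors as
\[
\kappa_{2,w-2}^{\alpha}(n)=\frac{(12n+w-10)(4n-w+4)}{48},
\]
so its positive root is exactly $n=\frac w4-1$, not a value beyond $\frac w4+1$. It is negative only for $n\le\frac w4-2$, vanishes at $n=\frac w4-1$, and equals $\frac{2w-5}{6}>0$ at $n=\frac w4$. Thus at the boundary index there is no positive diagonal term to help; it simply drops out. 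That is exactly why the two-part hypothesis propagates without any information about $\tilde a_{\frac w4-1,+}^{(w-2)}$. Your proposed strengthening (adding $\tilde a_{\frac w4-1,+}^{(w-2)}\ge 0$) is therefore unnecessary, and it would not close. Propagating it requires $\tilde a_{\frac w4,+}^{(w+2)}\ge 0$, where the diagonal weight $-\kappa_{2,w-2}^{\alpha}(\frac w4)=-\frac{2w-5}{6}<0$ multiplies the uncontrolled coefficient $\tilde a_{\frac w4,+}^{(w-2)}$. This is the same obstacle that forces the use of the third-order equation \eqref{eqn:Gtilde_ode} for $\wtilde G_w$.

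The estimate itself also has to be done exactly, not heuristically. With $c_w=\frac{3(w-4)w}{16(w-10)(w-5)(w-3)(w+2)}\sim\frac{3}{16w^2}$ and the off-diagonal term $\approx 6w^2\cdot\frac1{360}$, the output is $O(1)$. So the question is not whether $w^2$ dominates but whether the constant $\frac1{360}$ reproduces itself. At $n=\frac w4-1$ the $F_w$ contribution is $-\frac13(\frac w4-1)+\frac{w-1}{36}=-\frac{2w-11}{36}$, not $-\frac{w-6}{18}$. Also $-K_{2,w-2}^{\alpha}(\frac w4-1,\frac w4-2)=6w^2-67w+196$. Discarding the remaining nonnegative off-diagonal terms gives
\[
\tilde a_{\frac w4-1,+}^{(w+2)}\ge c_w\Bigl(\frac{6w^2-67w+196}{360}-\frac{2w-11}{36}\Bigr)=\frac{(w-6)(w-4)w(2w-17)}{640(w-10)(w-5)(w-3)(w+2)}.
\]
This tends to $\frac1{320}$ and is $\ge\frac1{360}$ for all $w\ge 12$ by a quartic inequality, so no small-$w$ numerics are needed. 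Only the base case $w=12$ is required: $\wtilde F_{10}=\delta F_{12}=\frac1{360}E_4X_{6,1}$, so $\tilde a_{1,+}^{(10)}=\frac1{360}$. $\wtilde F_6$ is not a usable starting point, since at $w=8$ the prefactor $c_8$ is negative.
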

\begin{proof}
    Let $\alpha_w = - \frac{(w-10)(w-4)}{48}$.
    From $\cS_w = -L_{2,w-2}^{\alpha_w}$, Lemma \ref{lem:KZ2_coeff} implies that the $n$-th Fourier coefficient of $\cS_w \wtilde{F}_{w-2}$ is
    \[
        [q^n](\cS_w \wtilde{F}_{w-2}) = -\kappa_{2,w-2}^{\alpha_w}(n) \tilde{a}_{n,+}^{(w-2)} - \sum_{j=1}^{n-1} K_{2,w-2}^{\alpha_w}(n, j) \tilde{a}_{j,+}^{(w-2)}
    \]
    where
    \begin{align}
        \kappa_{2,w-2}^{\alpha_w}(n) &= \frac{(12n + w - 10)(4n - w + 4)}{48} \label{eqn:kappa_F}, \\
        K_{2,w-2}^{\alpha_w}(n,j) &= -2(w-1)\bigl(w(n-j)-2n\bigr)\sigma_1(n-j) - 5(w-10)(w-4)\sigma_3(n-j). \label{eqn:K_F}
    \end{align}
    We now use induction on $w$.
    For $w = 12$, direct computation shows that
    \[
    \wtilde{F}_{10} = \frac{1}{360} E_4 X_{6,1}
    \]
    which is completely positive, i.e. $\tilde{a}_{j,+}^{(10)} > 0$ for all $j \ge 1$.
    Also, $\tilde{a}_{1,+}^{(10)} = \frac{1}{360}$.
    Assume that $\tilde{a}_{j,+}^{(w-2)} > 0$ for all $1 \le j \le \frac{w}{4} - 3$ and $\tilde{a}_{\frac{w}{4}-2,+}^{(w-2)} \ge \frac{1}{360}$.
    Then, by \eqref{eqn:Ftilde_rec}, \eqref{eqn:kappa_F}, and \eqref{eqn:K_F}, the $n$-th Fourier coefficient of $\wtilde{F}_{w+2}$ is
    \begin{equation}
        \label{eqn:Ftilde_induction_coeff}
        c_w\left(-\kappa_{2,w-2}^{\alpha_w}(n) \tilde{a}_{n,+}^{(w-2)} - \sum_{j=1}^{n-1} K_{2,w-2}^{\alpha_w}(n,j)\tilde{a}_{j,+}^{(w-2)} - \frac{1}{3}n b_n^{(w)} + \frac{w-1}{36}\left(b_n^{(w)} - 24 \sum_{k=1}^{n-1} \sigma_1(n-k) b_k^{(w)}\right)\right)
    \end{equation}
    where $c_w = \frac{3(w-4)w}{16(w-10)(w-5)(w-3)(w+2)}>0$.
    Since $F_w = q^{\frac{w}{4} - 1} + O(q^{\frac{w}{4}})$, we have $b_n^{(w)} = 0$ for all $1 \leq n \leq \frac{w}{4} - 2$.
    In this range, $4n-w+4<0$, so \eqref{eqn:kappa_F} gives $\kappa_{2,w-2}^{\alpha_w}(n)<0$. Moreover, for $1\leq j\leq n-1$ and $n\leq\frac{w}{4}-1$,
    \[
        w(n-j)-2n \geq w-2n>0,
    \]
    and hence \eqref{eqn:K_F} gives $K_{2,w-2}^{\alpha_w}(n,j)<0$.
    For such $n$, \eqref{eqn:Ftilde_induction_coeff} becomes
    \[
        c_w\left(- \kappa_{2,w-2}^{\alpha_w}(n) \tilde{a}_{n,+}^{(w-2)} - \sum_{j=1}^{n-1} K_{2,w-2}^{\alpha_w}(n,j)\tilde{a}_{j,+}^{(w-2)}\right)
    \]
    which is positive by the induction hypothesis.
    For $n = \frac{w}{4} - 1$, normalization gives $b_n^{(w)}=1$ and $b_k^{(w)}=0$ for $k<n$, while $\kappa_{2,w-2}^{\alpha_w}(n) = 0$ and
    \[
    K_{2,w-2}^{\alpha_w}\left(\frac{w}{4}-1, \frac{w}{4}-2\right) = -2(w-1)\left(w - 2\left(\frac{w}{4}-1\right)\right) - 5(w-10)(w-4) = -(6w^2 - 67w + 196),
    \]
    hence \eqref{eqn:Ftilde_induction_coeff} can be bounded from below as
    \begin{align}
        c_w\left(-\sum_{j=1}^{n-1} K_{2,w-2}^{\alpha_w}(n,j)\tilde{a}_{j,+}^{(w-2)} - \frac{2w-11}{36}\right) &\ge c_w \left(-K_{2,w-2}^{\alpha_w}\left(\frac{w}{4}-1, \frac{w}{4}-2\right) \tilde{a}_{\frac{w}{4}-2,+}^{(w-2)} - \frac{2w-11}{36}\right) \nonumber \\
        &\ge c_w \left(\frac{6w^2 - 67w + 196}{360} - \frac{2w - 11}{36}\right) \nonumber \\
        &= \frac{(w-6)(w-4)w(2w-17)}{640(w-10)(w-5)(w-3)(w+2)} \label{eqn:boundary_coeff}
    \end{align}
    and \eqref{eqn:boundary_coeff} is greater than or equal to $\frac{1}{360}$ if and only if $2w^4 - 77w^3 + 1018 w^2 - 4312 w + 4800 \ge 0$, which is true for all $w \ge 12$.
    This proves the claim for $n = \frac{w}{4} - 1$ and completes the induction.
\end{proof}

Since $n_{d,+} \le \frac{w}{4} - 2$, Proposition \ref{prop:Ftilde_pos} implies that $b_{0,+} > 0$, and hence $M_{d,+}(\mathbf{0}) < 0$.
As a corollary, we obtain the desired upper bound of $\rA_+(d)$.

\begin{corollary}
    \label{cor:Apos_0mod8}
    For $d \equiv 0 \pmod{8}$, we have
    \[
        \rA_+(d) < \sqrt{2n_{d,+}} = \sqrt{2\left(\left\lfloor\frac{d}{16}\right\rfloor + 1\right)}.
    \]
\end{corollary}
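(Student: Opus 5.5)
We use the function $f := -M_{d,+}$ from Theorem~\ref{thm:fghposeig} (with $d \equiv 0 \pmod 8$, so that $\widehat{M_{d,+}} = M_{d,+}$) as a self-Fourier test function in the characterization of $\rA_+(d)$ recalled in the preliminaries: $\rA_+(d)$ is the minimum of $r(g)$ over nonzero $g\in L^1(\bR^d)$ with $\what g = g$, $g(0)\le 0$, and $g$ eventually nonnegative. Equivalently, we may check $f \in \mathcal{A}_+(d)$ directly, since then $\sqrt{r(f)r(\what f)} = r(f)$. The sign of $f$ will be chosen so that $f$ is eventually nonnegative. Integrability of $f$ and $\what f = f$ come from the Feigenbaum--Grabner--Hardin construction.

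\textbf{Sign for large radius.} For $\|\bx\|^2 > 2n_{d,+}$, the integral \eqref{eqn:Mdpos} converges. Its integrand is
\[
t^{2-w}F_w(i/t)/\Delta(it)^{n_{d,+}} = t^{2-w}\,t^{w}(F_w|_w S)(it)\cdots,
\]
so we rewrite it using the modularity $\Delta(it)=t^{-12}\Delta(i/t)$. More simply, we substitute $s=1/t$, which shows the integrand equals $t^{2-w}F_w(i/t)\Delta(it)^{-n_{d,+}}$ with $F_w(i/t)>0$ by Theorem~\ref{thm:poseigpos} and $\Delta(it)>0$. Hence the integrand is strictly positive for every $t>0$. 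The prefactor $4\sin^2(\pi\|\bx\|^2/2)$ is nonnegative. So $M_{d,+}(\bx)\ge 0$ whenever $\|\bx\|>\sqrt{2n_{d,+}}$, i.e.\ $M_{d,+}$ itself is eventually nonnegative with $r(M_{d,+})\le\sqrt{2n_{d,+}}$. We therefore take $f=M_{d,+}$, not $-M_{d,+}$.

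\textbf{Value at the origin.} By \eqref{eqn:Mpos_orig}, $M_{d,+}(\mathbf 0)=-6b_{0,+}/\pi$. By \eqref{eqn:bzero_pos} together with Proposition~\ref{prop:Ftilde_pos} and $n_{d,+}\le w/4-2$, we have $b_{0,+}>0$. Hence $f(\mathbf 0)=\what f(\mathbf 0)<0$, and $f\neq 0$. This gives $f\in\mathcal A_+(d)$ and $\rA_+(d)\le r(f)\le\sqrt{2n_{d,+}}$.

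\textbf{Strictness.} This is the main obstacle. We argue that the last sign change must occur strictly inside $\sqrt{2n_{d,+}}$. Theorem~\ref{thm:fghposeig} says $M_{d,+}(\sqrt{2n_{d,+}})=0$ with $M_{d,+}'(\sqrt{2n_{d,+}})\ne0$, so $M_{d,+}$ changes sign there. This seems to force $r(f)=\sqrt{2n_{d,+}}$, which would give only the non-strict bound.

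To get strictness, we exploit $f(\mathbf 0)<0$ rather than $=0$. Since $f$ is strictly negative at the origin, we perturb: consider $g=f+\varepsilon(\text{Gaussian-type self-dual function that is positive near }\sqrt{2n_{d,+}})$. For instance, take $g = f + \varepsilon\,(h_0 - c\,h_1)$, where $h_0, h_1$ are $+1$-eigen Hermite-type radial functions. These are chosen so that the perturbation has value $0$ at the origin, is positive on a neighborhood of $\sqrt{2n_{d,+}}$, and decays like a Gaussian. For $\varepsilon$ small, $g(\mathbf 0)<0$ still holds, since $f(\mathbf 0)<0$ gives slack. The simple zero of $f$ at $\sqrt{2n_{d,+}}$ then moves inward. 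Outside that point, $f$ has only double zeros at $\sqrt{2m}$ with $m > n_{d,+}$, and positivity there must be preserved. This needs a quantitative lower bound on $f$ away from those points relative to the Gaussian tail. That tail comparison is delicate, and it is exactly where I expect the difficulty to lie. Alternatively, one could cite the general fact (from Gonçalves--Oliveira e Silva--Steinerberger) that an extremizer must have $g(\mathbf 0)=0$. Since our $f$ has $f(\mathbf 0)<0$, it is not an extremizer, which yields the strict inequality without constructing the perturbation.
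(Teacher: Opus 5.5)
Up to the non-strict bound, your argument is the paper's. Theorem~\ref{thm:poseigpos} makes the integrand positive, so $M_{d,+}\ge 0$ for $\|\bx\|>\sqrt{2n_{d,+}}$. Proposition~\ref{prop:Ftilde_pos}, together with \eqref{eqn:bzero_pos} and \eqref{eqn:Mpos_orig}, gives $M_{d,+}(\mathbf 0)<0$.

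The gap is in the strictness step, which is the only new content of the corollary. Your explicit perturbation is never completed, and its design is misdirected. You require $h_0-ch_1$ to vanish at the origin, so $g(\mathbf 0)=f(\mathbf 0)$ for every $\varepsilon$ and the slack $f(\mathbf 0)<0$ is never used. Meanwhile you take on a tail comparison at the double zeros $\sqrt{2m}$ that you cannot control.

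Your fallback is logically fine as a deduction: if $\rA_+(d)=\sqrt{2n_{d,+}}$, then $M_{d,+}$ would be an extremizer with nonzero value at the origin. But the fact it needs, that every extremizer vanishes at $\mathbf 0$, is not among the results recalled in the paper, and you give no precise reference for it. Its proof is exactly the step that is missing here, so as written the strict inequality rests on an outsourced claim.

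The missing idea is to spend the slack on a perturbation that is positive everywhere. The Gaussian $e^{-\pi\|\bx\|^2}$ is its own Fourier transform and is strictly positive. Put $f(\bx):=M_{d,+}(\bx)-M_{d,+}(\mathbf 0)e^{-\pi\|\bx\|^2}$. Then:
\begin{itemize}
\item $\what f=f$ and $f(\mathbf 0)=0$;
\item $f>M_{d,+}\ge 0$ on the closed region $\|\bx\|\ge\sqrt{2n_{d,+}}$, and in particular $f=-M_{d,+}(\mathbf 0)e^{-2\pi n_{d,+}}>0$ on the sphere where $M_{d,+}$ has its simple zero;
\item by continuity of the radial profile, $f>0$ for $\|\bx\|>\sqrt{2n_{d,+}}-c$ for some $c>0$.
\end{itemize}
No tail estimate is needed, because the added term is positive at every radius. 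This gives $f\in\mathcal{A}_+(d)$ with $r(f)=r(\what f)<\sqrt{2n_{d,+}}$ directly. This is the paper's proof, and the same two lines are what prove the extremizer fact you wanted to cite.
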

\begin{proof}
    By Theorem \ref{thm:poseigpos} and \eqref{eqn:Mdpos}, $M_{d,+}(\bx) \geq 0$ for all $\|\bx\| > \sqrt{2n_{d,+}}$.
    Also, by Proposition \ref{prop:Ftilde_pos}, \eqref{eqn:Mpos_orig}, and \eqref{eqn:bzero_pos}, we have $M_{d,+}(\mathbf{0}) < 0$.
    Hence, if we consider the function $f(\bx) := M_{d,+}(\bx) - M_{d,+}(\mathbf{0}) e^{-\pi \|\bx\|^2}$, then $\what{f} = f$, $f(\mathbf{0}) = 0$, and there exists $c > 0$ such that $f(\bx) > 0$ for all $\|\bx\| > \sqrt{2n_{d,+}} - c$.
    Hence $f \in \mathcal{A}_+(d)$ with $r(f) = r(\what{f}) < \sqrt{2n_{d,+}}$, which implies the inequality.
\end{proof}

\begin{remark}
    Based on experimental evidence, we conjecture that $\widetilde{F}_{w-2}$ is completely positive for all $w \geq 8$ with $w \equiv 0 \pmod{4}$.
\end{remark}

\subsection{$(-1)^{d/4 + 1}$-eigenforms}
\label{subsec:fgh_negeig}

The construction of $(-1)^{d/4 + 1}$-eigenfunctions in \cite{feigenbaum2021eigenfunctions} differs from that of the $(-1)^{d/4}$-eigenfunctions above.
The corresponding ``modular forms'' $\{\phi_w\}_{w \geq 8}$ can be expressed in terms of the Jacobi theta functions $\Theta_2$ and $\Theta_4$ in \eqref{eqn:jacobi_theta}, the modular discriminant $\Delta$, and $\log\lambda$.
It is more convenient for us to work with the $S$-transforms $\phi_w|_S$, and the forms $G_w$ below are the normalizations of these (so that the first nonzero Fourier coefficient of each $G_w$ is $1$); accordingly, $\cL = \log \lambda$ is replaced by $\cL_S = \log \lambda_S$.
The derivative of $\cL_S$ is \cite[Appendix]{feigenbaum2021eigenfunctions}
\begin{equation}
    \label{eqn:LS_derivative}
    \cL_S' = \frac{H_4'}{H_4} - \frac{H_2' + H_4'}{H_2 + H_4} = -\frac{1}{2} H_2.
\end{equation}
By the $\mathrm{SL}_2(\mathbb{Z})$-equivariance of the Serre derivative, the transforms $\phi_w|_S$ satisfy the same recurrence relations and differential equations as $\phi_w$.
We also find two missing pieces, $G_4$ and $G_6$: after adjoining them, the low-weight instances $w=4$ of \eqref{eqn:negeigrec4} and $w=8$ of \eqref{eqn:negeigrec2} hold, and $G_4$ satisfies \eqref{eqn:negeigde}.
The theorem below is essentially equivalent to the combination of Theorem 4.4 and Propositions 5.5 and 5.6 of \cite{feigenbaum2021eigenfunctions}, with minor modifications.\footnote{There are minor errors in the expressions for $\phi_{16}$ and $\phi_{18}$ in \cite{feigenbaum2021eigenfunctions}; they are corrected in Theorem \ref{thm:fghnegeig}.}

\begin{theorem}[Feigenbaum--Grabner--Hardin \cite{feigenbaum2021eigenfunctions}, normalized]
\label{thm:fghnegeig} 
For even $w \geq 4$, define ``modular forms'' $\{G_w\}_{w \geq 4}$ of weight $w$ and level $\Gamma(2)$ by
\begin{align*}
    G_{4} &= \frac{H_2}{2^5} (H_2 + 2H_4) \\
    G_{6} &= \frac{H_2}{2^4} (H_2^2 + H_2 H_4 + H_4^2) \\
    G_{8} &= \frac{H_{2}^{3}}{2^{13}}(H_{2} + 2H_{4})\\
    G_{10} &= \frac{H_{2}^{3}}{2^{12} \cdot 5}(2H_{2}^{2} + 5 H_{2} H_{4}+ 5H_{4}^{2}) \\
    G_{12} &= \frac{3 \Delta \cL_S}{2^{11} \cdot 7} + \frac{3H_{2}^{3}}{2^{20} \cdot 7}(H_{2}^{3} + 3 H_{2}^{2}H_{4} + 3H_{2} H_{4}^{2} + 2H_{4}^{3})\\
    G_{14} &= \frac{H_{2}^{5}}{2^{20} \cdot 7}(2H_{2}^{2} + 7H_{2} H_{4} + 7 H_{4}^{2})\\
    G_{16} &= \frac{5E_4 \Delta \cL_S}{2^{18} \cdot 11} + \frac{5H_{2}^{3}}{2^{29} \cdot 3 \cdot 11} (5 H_{2}^{5} + 20 H_{2}^{4} H_{4} + 42 H_{2}^{3}H_{4}^{2} + 68 H_{2}^{2} H_{4}^{3} + 60 H_{2} H_{4}^{4} + 24 H_{4}^{5}) \\
    G_{18} &= -\frac{5E_6 \Delta \cL_S}{2^{17} \cdot 11 \cdot 13}  +\frac{5H_{2}^{3}}{2^{27} \cdot 3 \cdot 11 \cdot 13} (10 H_{2}^{6} + 45 H_{2}^{5} H_{4} + 68  H_{2}^{4} H_{4}^{2} + 34 H_{2}^{3}H_{4}^{3} -13H_{2}^{2} H_{4}^{4}-36 H_{2}H_{4}^{5} -12 H_{4}^{6})
\end{align*}
The recurrence \eqref{eqn:negeigrec2} holds for $w=8,12$ and for $w \equiv 0 \pmod{4}$ with $w \geq 20$, whereas \eqref{eqn:negeigrec4} holds for $w=4$ and for $w \equiv 0 \pmod{4}$ with $w \geq 12$:
\begin{align}
    G_{w + 2} &= \frac{3(w-6)(w-2)}{16(w-16)(w-5)(w-4)(w-3)} \left(\frac{(w-9)(w-8)}{36}E_4 G_{w-2} - \partial_{w-2}^{2} G_{w-2} \right) \label{eqn:negeigrec2} \\
    G_{w + 4} &= \frac{3(w-2)(w+2)}{16(w-8)(w-3)(w-1)(w+4)} \left(\frac{(w-4)(w-3)}{36} E_4 G_{w} - \partial_{w}^{2} G_{w}\right). \label{eqn:negeigrec4}
\end{align}
Then the vanishing order of $G_{w}$ at the cusp is $\lfloor \frac{w}{4} \rfloor - \frac{1}{2}$.
For $w \equiv 0 \pmod{4}$, the functions $G_{w}$ satisfy the third-order ordinary differential equation
\begin{equation}
\label{eqn:negeigde}
L_{3,w}^{(\frac{w-2}{4},0)} G_w = \partial_{w}^3G_w - \frac{3w^2 - 24w + 80}{144}E_4 \partial_{w}G_{w} - \frac{(w-12)(w-3)w}{864}E_6 G_w = 0,
\end{equation}
or equivalently,
\begin{equation}
    \label{eqn:negeigde2}
    G_w''' - \frac{w+2}{4} E_2 G_w'' + \left(\frac{(w+1)(w+2)}{4}E_2' + \frac{w-2}{4}E_4\right) G_w' - \left(\frac{w(w+1)(w+2)}{24}E_2'' + \frac{(w-2)w}{16} E_4'\right) G_w = 0.
\end{equation}
Now, let $d$ be a positive integer divisible by $4$ and $n_{d,-} = \lfloor d/16\rfloor + 1$.
Let $w = w_{d, -} = 12 \lfloor d / 16 \rfloor - d/2 + 14$.
Then the following function
\begin{equation}
\label{eqn:Mdneg}
    M_{d,-}(\mathbf{x}) = 4 \sin^{2}\left(\frac{\pi \|\mathbf{x}\|^{2}}{2}\right) \int_{0}^{\infty} \frac{t^{-w}G_{w}(i/t)}{\Delta(it)^{n_{d,-}}} e^{-\pi \|\mathbf{x}\|^{2} t} \dd t
\end{equation}
for $\mathbf{x}\in \mathbb{R}^{d}$ satisfies (here we abuse notation by writing $M_{d,-}(\mathbf{x}) = M_{d,-}(\|\mathbf{x}\|)$)
\begin{align*}
    \widehat{M_{d,-}}(\mathbf{x}) &= (-1)^{d/4 + 1} M_{d,-}(\mathbf{x}) \quad \forall \mathbf{x} \in \mathbb{R}^{d}, \\
    M_{d,-}(\sqrt{2 n_{d,-}}) &= 0\quad\text{and}\quad M_{d,-}'(\sqrt{2n_{d,-}}) \neq 0, \\
    M_{d,-}(\sqrt{2m}) &= M_{d,-}'(\sqrt{2m}) = 0 \quad\forall m > n_{d,-}, m \in \mathbb{Z}.
\end{align*}
Note that the integral \eqref{eqn:Mdneg} converges when $\|\bx\| > \sqrt{2n_{d,-}}$, and one can analytically continue it to the origin (see Section \ref{subsec:nonposorig_neg}).
\end{theorem}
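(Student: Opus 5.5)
This theorem is the normalized form of the Feigenbaum--Grabner--Hardin construction. As with Theorem~\ref{thm:fghposeig}, everything except the normalization and the two adjoined low-weight forms $G_4$, $G_6$ is in \cite{feigenbaum2021eigenfunctions}. That includes the eigenfunction property, the location of the roots of $M_{d,-}$, and the convergence of \eqref{eqn:Mdneg}. These transfer to the $S$-transforms $\phi_w|_S$ because the Serre derivative is $\SL_2(\bZ)$-equivariant and $E_4$ is invariant under the slash action. So the plan is to (i) check the listed closed forms and low-weight recurrences directly, (ii) verify the differential equation, and (iii) establish the normalization and vanishing order by induction, mirroring the proof of Theorem~\ref{thm:fghposeig}.

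\textbf{Closed forms and low-weight recurrences.} For (i), I would expand each listed $G_w$ as a series in $q^{1/2}$, using the expansions of $H_2$ and $H_4$ and \eqref{eqn:LS_qexp}. Each leading term should be $q^{\lfloor w/4\rfloor-1/2}$, since $H_2=16q^{1/2}+\cdots$, $\cL_S=-16q^{1/2}+\cdots$ and $\Delta=q+\cdots$. I would then check the cases $w=4,8,12$ of \eqref{eqn:negeigrec2}/\eqref{eqn:negeigrec4} symbolically, as well as the corrections of $G_{16}$ and $G_{18}$. These checks need only the Serre derivatives of $H_2$ and $H_4$ (the weight-2 theta derivative formulas $\partial_2 H_2 = \tfrac16 H_2(H_2+2H_4)$ and its companions), together with \eqref{eqn:LS_derivative} for the $\cL_S$ terms. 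The $\cL_S$ terms cause no trouble: $\partial(\Delta\cL_S)=(\partial_{12}\Delta)\cL_S+\Delta\cL_S'=-\tfrac12\Delta H_2$, so $\cL_S$ always appears linearly and multiplied by an honest modular form. The recurrences therefore split into an $\cL_S$-part, which is a level-one recurrence, and a polynomial-in-$H_2,H_4$ part.

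\textbf{Differential equation.} For (ii), I would argue by induction on $w\equiv 0\pmod 4$, using the intertwining identity of Lemma~\ref{lem:intertwine}. The recurrence \eqref{eqn:negeigrec4} writes $G_{w+4}$ as a multiple of $-L_{2,w}^{\gamma}G_w$ with $\gamma=-\frac{(w-4)(w-3)}{36}+\frac{w(w+2)}{144}$. I would choose parameters so that
\[
L_{3,w+4}^{(\frac{w+2}{4},0)}L_{2,w}^{\gamma}=L_{2,w+6}^{\gamma'}L_{3,w}^{(\frac{w-2}{4},0)},
\]
which requires checking the four constraints \eqref{eqn:intertwine_constraints}. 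Given that identity, the differential equation \eqref{eqn:negeigde} at weight $w$ propagates to weight $w+4$. The base case $G_4$ is checked directly. The equivalence of \eqref{eqn:negeigde} and \eqref{eqn:negeigde2} is just \eqref{eqn:KZ3_def} versus \eqref{eqn:KZ3_def_serre}.

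\textbf{Normalization.} For (iii), apply Lemma~\ref{lem:KZ3_coeff} with half-integral exponents. The indicial polynomial $\kappa_{3,w}^{(\frac{w-2}{4},0)}(n)$ vanishes at $n=\frac w4-\frac12$, and this fixes the ratio of the first two coefficients. I would then compute the two lowest coefficients of $G_{w+4}$ from \eqref{eqn:negeigrec4}, showing that the $q^{w/4-1/2}$ term cancels and that the next coefficient equals $1$. The case $w\equiv 2\pmod 4$ follows in the same way from \eqref{eqn:negeigrec2}.

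The main obstacle is step (ii), in particular confirming that the parameters satisfy all four constraints \eqref{eqn:intertwine_constraints}. If that verification fails, I would fall back on the argument of Proposition~\ref{prop:serrederposeig}: first derive the analogue $\partial_w G_w=c\,G_{w+2}$, then eliminate $\partial^3$ using the differential equation at the previous weight.
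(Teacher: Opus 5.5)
Your handling of the analytic content and of the normalization matches the paper. The paper cites \cite{feigenbaum2021eigenfunctions} for everything except the normalization, and gets the normalization from the differential equation by the coefficient induction of Theorem~\ref{thm:fghposeig}. Where you differ is that you also \emph{prove} \eqref{eqn:negeigde} by intertwining. The paper instead takes it from Feigenbaum--Grabner--Hardin and only adds the check for the new form $G_4$. Intertwining is the technique the paper uses for $Y_w$ and $\wtilde{G}_w$, and the step you flagged does go through. Take $\alpha=\frac{w+2}{4}$, $\alpha'=\frac{w-2}{4}$, $\beta=\beta'=0$ and $\gamma=-\frac{(w-2)(w-8)}{48}$. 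Then $A=-\frac{3w^2+32}{144}$, $A'=-\frac{3w^2-24w+80}{144}$, $B=-\frac{(w+4)(w+1)(w-8)}{864}$, $B'=-\frac{w(w-3)(w-12)}{864}$ and $C=-\frac{(w-3)(w-4)}{36}$. The first constraint forces $C'=-\frac{w(w-1)}{36}$, i.e.\ $\gamma'=-\frac{(w+2)(w-8)}{48}$, and the other three constraints in \eqref{eqn:intertwine_constraints} then hold identically in $w$. So the fallback is not needed.

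The gap is in the base of this induction. You start only from $G_4$, but \eqref{eqn:negeigrec4} is excluded at $w=8$, because its prefactor has $(w-8)$ in the denominator. In fact the bracket vanishes identically there: $G_8=q^{3/2}+36q^{5/2}+\cdots$ is annihilated by $L_{2,8}=\partial_8^2-\frac59E_4$. To see this, note that the $q^{3/2}$ and $q^{5/2}$ coefficients of $L_{2,8}G_8$ are $0$ and $\frac52\cdot 36-90=0$, and a nonzero weight-$12$ form on $\Gamma(2)$ cannot vanish to order $>3$ at $\infty$. Consistently, $G_{12}$ is the first member containing $\Delta\cL_S$, which cannot arise from the log-free $G_8$ through any differential recurrence with quasimodular coefficients. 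Your induction therefore stops at $G_8$. You must check $L_{3,12}^{(5/2,0)}G_{12}=0$ directly; the $E_6$-term vanishes at $w=12$, so this reads $(\partial_{14}^2-\frac{14}{9}E_4)\partial_{12}G_{12}=0$. Then restart both the differential-equation and the normalization inductions at $G_{12}$.

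Likewise, \eqref{eqn:negeigrec2} is excluded at $w=16$, so the $w\equiv 2\pmod 4$ chain must restart at the explicit $G_{18}$. Your phrase ``in the same way'' for that family also hides an ingredient. Fixing the second coefficient of $G_{w-2}$ requires a third-order equation for the $w\equiv 2\pmod 4$ forms, and no such equation appears in the statement. The proof of Theorem~\ref{thm:fghposeig} imports the analogous one from \cite{feigenbaum2021eigenfunctions}.
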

\begin{proof}
    The result is proved in \cite{feigenbaum2021eigenfunctions}, except for the normalization claim, which follows from \eqref{eqn:negeigde2} and induction on $w$, as in the proof of Theorem \ref{thm:fghposeig}.
\end{proof}

The functions $G_w$ are not modular forms in general; rather, they are combinations of modular forms of levels $1$ and $2$ and terms involving $\cL_S$.
$G_w$ admits a Fourier expansion in $q^{1/2} = e^{\pi i z}$ and is normalized in the sense that its first nonzero Fourier coefficient is $1$.

These forms also satisfy a relation similar to that in Proposition \ref{prop:serrederposeig}, which can likewise be proved by induction.
We omit the details of the proof.
\begin{proposition}
\label{prop:serredernegeig}
For $w \geq 8$ divisible by $4$, we have
\begin{align}
    G_{w+2} &= \frac{(w-6)(w-2)}{768(w-5)(w-3)} (E_4 G_{w-2} -E_6 G_{w-4}) \label{eqn:negeigeq1} \\
    \partial_{w}G_{w} &= \frac{w-3}{6} G_{w + 2} \label{eqn:negeigeq2}
\end{align}
\end{proposition}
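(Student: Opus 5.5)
The plan is to copy the proof of Proposition \ref{prop:serrederposeig} almost verbatim. We run a simultaneous induction on $w \equiv 0 \pmod 4$, driven by the recurrences \eqref{eqn:negeigrec2} and \eqref{eqn:negeigrec4} and the third-order equation \eqref{eqn:negeigde}. Every operation involved (Serre derivatives, multiplication by $E_4,E_6$, and the Ramanujan identities \eqref{eqn:ramanujan_serre}) is purely formal. It therefore applies equally to the functions $G_w$, even though they contain the non-modular term $\Delta\cL_S$.

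\textbf{Base cases.} For small $w$ we check \eqref{eqn:negeigeq2} and \eqref{eqn:negeigeq1} directly.
\begin{itemize}
\item We verify \eqref{eqn:negeigeq2} for $w=4,8,12,16$ and \eqref{eqn:negeigeq1} for $w=8,12,16$, using the explicit formulas of Theorem \ref{thm:fghnegeig}.
\item For this we need $\partial_2 H_2 = \tfrac{1}{6}H_2(H_2+2H_4)$ and $\partial_2 H_4 = -\tfrac16 H_4(2H_2+H_4)$, together with $\partial_{12}\Delta=0$ and \eqref{eqn:LS_derivative}. The last gives $\partial_{w}(M\Delta\cL_S) = (\partial_{w-12}M)\Delta\cL_S - \tfrac12 M\Delta H_2$.
\item Each identity then splits into a $\Delta\cL_S$-part and a pure theta-polynomial part, and both are checked by polynomial identities in $H_2,H_4$.
\end{itemize}
Having $G_4$ and $G_6$ available is what makes the $w=8$ case of \eqref{eqn:negeigeq1} meaningful.

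\textbf{Step 1: \eqref{eqn:negeigeq1} at $w+4$.} Assume \eqref{eqn:negeigeq1}$_w$ and \eqref{eqn:negeigeq2}$_w$ hold for some $w\ge 16$ (or $w \ge 12$, depending on where \eqref{eqn:negeigrec2} applies).
\begin{itemize}
\item Write \eqref{eqn:negeigrec2}$_{w+4}$. It expresses $G_{w+6}$ through $E_4G_{w+2}$ and $\partial_{w+2}^2G_{w+2}$.
\item Replace $G_{w+2}$ by $\frac{6}{w-3}\partial_w G_w$, using \eqref{eqn:negeigeq2}$_w$. This turns $\partial^2_{w+2}G_{w+2}$ into $\frac{6}{w-3}\partial_w^3G_w$.
\item Eliminate $\partial_w^3 G_w$ with \eqref{eqn:negeigde}$_w$. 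This leaves a combination of $E_4\partial_wG_w$ and $E_6G_w$.
\item Convert $E_4\partial_w G_w$ back into $E_4G_{w+2}$ with \eqref{eqn:negeigeq2}$_w$.
\end{itemize}
The result is \eqref{eqn:negeigeq1}$_{w+4}$, provided the scalar coefficients collapse to $\frac{(w-2)(w+2)}{768(w-1)(w+1)}$ for both terms. Checking this is a rational-function identity in $w$.

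\textbf{Step 2: \eqref{eqn:negeigeq2} at $w+4$.} Apply $\partial_{w+4}$ to \eqref{eqn:negeigrec4}$_w$. Each piece is then handled as follows.
\begin{itemize}
\item The term $\partial_{w+4}(E_4G_w)$ expands by the product rule to $-\tfrac13E_6G_w + E_4\partial_wG_w$.
\item The term $\partial^3_wG_w$ is eliminated via \eqref{eqn:negeigde}$_w$.
\item What remains is a combination of $E_4\partial_wG_w$ and $E_6G_w$. Rewriting $\partial_wG_w$ with \eqref{eqn:negeigeq2}$_w$ and comparing with \eqref{eqn:negeigeq1}$_{w+4}$ from Step 1 identifies it as $\frac{w+1}{6}G_{w+6}$.
\end{itemize}

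\textbf{Main obstacle.} The real difficulty lies in the low weights, not in the induction step. The recurrence \eqref{eqn:negeigrec2} is only valid for $w=8,12$ and for $w\ge20$, because of the $(w-16)$ factor. So the induction must be started at a point where all the relations it invokes are available. Concretely, the base cases $w\le 16$ must be checked by hand, including the $\cL_S$ bookkeeping in $G_{12}$, $G_{16}$, $G_{18}$, and the induction step is only used from $w=16$ onward.

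The second thing to watch is the normalization constants. The corrected $G_{16}$ and $G_{18}$ must make the scalar factors in \eqref{eqn:negeigeq1} and \eqref{eqn:negeigeq2} come out exactly. We would confirm this by comparing leading $q^{1/2}$-coefficients, as in the proof of Theorem \ref{thm:fghposeig}.
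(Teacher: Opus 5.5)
Your proposal is correct and follows the route the paper indicates (the paper omits the details and defers to the proof of Proposition~\ref{prop:serrederposeig}). In that route, \eqref{eqn:negeigrec2}${}_{w+4}$ together with \eqref{eqn:negeigeq2}${}_{w}$ and \eqref{eqn:negeigde}${}_{w}$ gives \eqref{eqn:negeigeq1}${}_{w+4}$, and applying $\partial_{w+4}$ to \eqref{eqn:negeigrec4}${}_{w}$ then gives \eqref{eqn:negeigeq2}${}_{w+4}$; you are also right that the factor $(w-16)$ in \eqref{eqn:negeigrec2} makes the case $w=16$ of \eqref{eqn:negeigeq1} (involving the explicit $G_{18}$) a base case to be checked directly, which the omitted argument leaves implicit.
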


We will assume $d \equiv 4 \pmod{8}$, so that
\begin{equation}
    \label{eqn:wdneg}
    w_{d,-} = \begin{cases}
        \frac{d}{4} + 11 & d \equiv 4 \pmod{16} \\ \frac{d}{4} + 5 & d \equiv 12 \pmod{16}
    \end{cases},\quad
    n_{d,-} = \begin{cases}
        \frac{d + 12}{16} = \frac{w}{4} - 2 & d \equiv 4 \pmod{16} \\ \frac{d + 4}{16} = \frac{w}{4} - 1 & d \equiv 12 \pmod{16}
    \end{cases}.
\end{equation}

\subsubsection{Companion of $X_{w,2}$}
\label{subsec:Yw}

We ask whether there is a family of level $\Gamma(2)$ ``extremal forms'' closely related to $G_{w}$, analogous to the relations between $F_{w}$ and $X_{w, 2}$ in Proposition \ref{prop:ext2poseig}.
The coefficients in \eqref{eqn:negeigeq1} and \eqref{eqn:negeigeq2} are obtained from those in \eqref{eqn:poseigeq1} and \eqref{eqn:poseigeq2} by shifting $w$ by $2$; this observation leads to the following family $\{Y_{w}\}_{w \ge 2}$.

\begin{definition}
Define $Y_{w}$ for even $w \geq 2$ inductively by
\begin{align*}
    Y_{2} &= \frac{H_{2}}{2^4} \\
    Y_{4} &= -\frac{3 E_4 \cL_S}{2^{11} \cdot 5} -\frac{3}{2^{12} \cdot 5} (H_{2}^{2} + 2 H_{2} H_{4}) \\
    Y_{6} &= 0 \\
    Y_{8} &= -\frac{5 E_4^2 \cL_S}{2^{19} \cdot 7} - \frac{5}{2^{21} \cdot 3 \cdot 7} (11 H_2^4 + 28 H_2^3 H_4 + 18 H_2^2 H_4^2 + 12 H_2 H_4^3),
\end{align*}
and for $w \geq 8$ with $w \equiv 0 \pmod{4}$,
\begin{align}
    Y_{w+4} &=  \frac{3(w+6)^{2}}{16 (w+3)(w+4)^2 (w+5)}\left(\frac{(w+2)(w+3)}{36} E_4 Y_{w} - \partial_{w}^{2} Y_{w}\right) \label{eqn:Ywdef1}\\
    Y_{w+2} &= \frac{6}{w + 3} \partial_{w} Y_{w} \label{eqn:Ywdef2}
\end{align}
\end{definition}

The $q$-expansions of $Y_w$ for $w \le 10$ are as follows:
\begin{align*}
    Y_{2} &= \frac{H_{2}}{2^4} = q^{\frac{1}{2}} + 4 q^{\frac{3}{2}} + 6 q^\frac{5}{2} + 8 q^\frac{7}{2} + 13 q^\frac{9}{2} + \cdots \\
    Y_{4} &= -\frac{3 E_4 \cL_S}{2^{11} \cdot 5} -\frac{3}{2^{12} \cdot 5} (H_{2}^{2} + 2 H_{2} H_{4}) \\
    &= q^{\frac{3}{2}} + \frac{276}{25}q^\frac{5}{2} + \frac{1566}{35}q^{\frac{7}{2}} + \frac{14072}{105}q^\frac{9}{2} + \frac{113963}{385}q^\frac{11}{2}+ \cdots \\
    Y_{6} &= 0 \\
    Y_{8} &= -\frac{5 E_4^2 \cL_S}{2^{19} \cdot 7} - \frac{5}{2^{21} \cdot 3 \cdot 7} (11 H_2^4 + 28 H_2^3 H_4 + 18 H_2^2 H_4^2 + 12 H_2 H_4^3) \\
    &= q^\frac{5}{2} + \frac{1020}{49}q^\frac{7}{2} + \frac{80470}{441}q^\frac{9}{2} + \frac{1593080}{1617}q^\frac{11}{2} + \frac{27913055}{7007}q^{\frac{13}{2}} + \cdots \\
    Y_{10} &= \frac{5 E_4 E_6 \cL_S}{2^{17} \cdot 7 \cdot 11} - \frac{5}{2^{19} \cdot 3 \cdot 7 \cdot 11} (2 H_2^5 + 11 H_2^4 H_4 - H_2^3 H_4^2 - 24 H_2^2 H_4^3 - 12 H_2 H_4^4) \\
    &= q^\frac{5}{2} + \frac{2004}{49}q^\frac{7}{2} + \frac{259918}{441}q^\frac{9}{2} + \frac{84839768}{17787}q^\frac{11}{2} + \frac{26865297}{1001}q^\frac{13}{2} + \cdots.
\end{align*}

Like $G_{w}$, these are \emph{not} modular forms in general because of the term $\cL_S$.
However, $Y_w$ satisfy certain recurrence relations and differential equations analogous to those of $X_{w, 2}$.
\begin{proposition}
\label{prop:Yproperties}
\begin{enumerate}
    \item For $w \geq 4$ with $w \equiv 0 \pmod{4}$, $Y_{w}$ satisfies the third-order ordinary differential equation
    \begin{equation}
        \label{eqn:Ywode}
        L_{3,w} Y_{w} = \partial_{w}^{3}Y_{w} - \frac{3(w+2)^2 - 4}{144} E_4 \partial_{w} Y_{w} - \frac{w^2(w+3)}{864} E_6 Y_{w} = 0
    \end{equation}
    where $L_{3,w} = L_{3,w}^{(0,0)}$ is the third-order Kaneko--Zagier operator \eqref{eqn:KZ3_def_serre}, or equivalently,
    \begin{equation}
        \label{eqn:Ywode2}
        Y_{w}''' - \frac{w+2}{4} E_2 Y_{w}'' + \frac{(w+1)(w+2)}{4} E_2' Y_{w}' - \frac{w(w+1)(w+2)}{24} E_2'' Y_w = 0.
    \end{equation}
    \item For $w \geq 8$ with $w \equiv 0 \pmod{4}$, we have
    \begin{align}
        Y_w &= q^{\frac{w + 2}{4}} + \frac{2(w + 2)(w^2 + 16w + 12)}{(w + 6)^2} q^{\frac{w + 6}{4}} + O(q^{\frac{w + 10}{4}}) \label{eqn:Yw4_qexp} \\
        Y_{w+2} &= q^{\frac{w + 2}{4}} + \frac{2(w^3 + 30 w^2 + 188w + 72)}{(w + 6)^2} q^{\frac{w + 6}{4}} + O(q^{\frac{w + 10}{4}}) \label{eqn:Yw4p2_qexp}
    \end{align}
    Together with the initial expansions above, this shows that the order of $Y_{w}$ at the cusp is $\lfloor \frac{w}{4} \rfloor + \frac{1}{2}$ for every even $w \geq 2$ with $w \neq 6$.
    \item For $w \geq 12$ with $w \equiv 0 \pmod{4}$, we have
    \begin{align}
        Y_{w+2} &= \frac{3(w+2)^{2}}{16(w-4)^{2}(w+1)(w+3)} \left(\frac{(w-3)(w-2)}{36} E_4 Y_{w-2} - \partial_{w-2}^{2} Y_{w-2}\right) \label{eqn:Yweq1}\\
        Y_{w+2} &= \frac{(w+2)^2}{768(w+1)(w+3)} (E_{4} Y_{w-2} - E_{6} Y_{w-4}) \label{eqn:Yweq2} \\
        G_{w} &= -\frac{256(w-1)w(w+1)}{(w-2)(w+2)^{2}} Y_{w} + E_{4} Y_{w-4} \label{eqn:Yweq3} \\
        G_{w+2} &= \frac{2(w-1)}{3(w-2)} E_{4} Y_{w-2} + \frac{w-4}{3(w-2)} E_{6} Y_{w-4}. \label{eqn:Yweq4}
    \end{align}
    The identity \eqref{eqn:Yweq3} also holds for $w=8$; the other three identities do not.
    \item For every even $w \ge 2$, $Y_w$ can be expressed as
    \[
        Y_w = \widetilde{Y}_{w} \cL_S + \Phi_w
    \]
    where $\widetilde{Y}_w$ and $\Phi_w$ are holomorphic modular forms of weight $w$ and levels $1$ and $2$, respectively.
\end{enumerate}
\end{proposition}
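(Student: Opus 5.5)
Part (4) comes first, because it makes everything else workable. By induction I will show $Y_w = \wtilde{Y}_w\cL_S + \Phi_w$, where $\wtilde{Y}_w$ is a level-$1$ modular form and $\Phi_w$ is a level-$2$ modular form. The base cases $Y_2,Y_4,Y_6,Y_8$ have this shape by definition: $\wtilde{Y}_4 = -\frac{3}{2^{11}\cdot 5}E_4$, $\wtilde{Y}_8 = -\frac{5}{2^{19}\cdot 7}E_4^2$, and $\wtilde{Y}_2=\wtilde{Y}_6=0$.

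For the inductive step, write $\cL_S' = -\frac12 H_2$ from \eqref{eqn:LS_derivative}. The product rule then gives
\[
\partial_w(\wtilde{Y}_w\cL_S+\Phi_w) = (\partial_w\wtilde{Y}_w)\cL_S + \bigl(\partial_w\Phi_w - \tfrac12 H_2\wtilde{Y}_w\bigr),
\]
and applying this twice handles $\partial_w^2$. So \eqref{eqn:Ywdef1} and \eqref{eqn:Ywdef2} preserve the shape, with $\wtilde{Y}$ obeying the same recurrences. Because $\partial_w$ and multiplication by $E_4$ send modular forms of level $1$ (respectively $2$) to modular forms of the same level, holomorphy is automatic.

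The effect is that every identity in (1) and (3) splits into two independent identities: a $\cL_S$-coefficient identity among level-$1$ modular forms, and a remainder identity among level-$2$ modular forms. Each of these lives in a finite-dimensional space, so it can be checked on $q$-expansions, using Sturm bounds at each fixed weight.

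For (1), I would use Lemma \ref{lem:intertwine}. Parameters can be chosen so that $L_{3,w+4}L_{2,w}^{\gamma} = L_{2,w+6}^{\gamma'}L_{3,w}$, where $L_{2,w}^{\gamma}$ is exactly the operator $-\bigl(\partial_w^2 - \frac{(w+2)(w+3)}{36}E_4\bigr)$ appearing in \eqref{eqn:Ywdef1}. Then \eqref{eqn:Ywdef1} gives $L_{3,w+4}Y_{w+4} \propto L_{2,w+6}^{\gamma'}(L_{3,w}Y_w) = 0$, which inducts from $w=4$ and $w=8$. The base cases follow from the $\cL_S$ splitting: $E_4$ and $E_4^2$ satisfy the ODE modulo the $H_2$ correction terms, and the remaining level-$2$ identity is checked on finitely many coefficients. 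Finding the solution $(\alpha,\beta,\gamma,\alpha',\gamma')$ of \eqref{eqn:intertwine_constraints} with $\alpha=\beta=\alpha'=\beta'=0$ is routine but must actually be verified. I expect this to be the main obstacle, because it is a nontrivial overdetermined system: four equations in the two free parameters $\gamma,\gamma'$.

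For (2), I would use Lemma \ref{lem:KZ3_coeff}, applied in half-integral exponents. The indicial polynomial $\kappa_{3,w}^{(0,0)}$ vanishes at $n=\frac{w+2}{4}$, so the ODE \eqref{eqn:Ywode} forces the ratio of the second coefficient to the first. Inductive normalization then follows exactly as in the proof of Theorem \ref{thm:fghposeig}: the leading coefficient of $Y_{w+4}$ is computed from \eqref{eqn:Ywdef1}, and the expansion of $Y_{w+2}$ comes from \eqref{eqn:Ywdef2}.

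For (3), I would mirror Propositions \ref{prop:d2eq} and \ref{prop:ext2poseig}. First, \eqref{eqn:Yweq1} follows by applying \eqref{eqn:Ywdef2} and then using the ODE to eliminate the third derivative. Next, \eqref{eqn:Yweq2} follows by applying $\partial$ to \eqref{eqn:Ywdef1}$_{w-4}$ and eliminating $\partial^3Y_{w-4}$ with \eqref{eqn:Ywode}. For \eqref{eqn:Yweq3}, uniqueness of extremal forms is unavailable, so I would argue by induction instead. Both sides satisfy \eqref{eqn:negeigrec4}/\eqref{eqn:Ywdef1}-type recurrences: $E_4Y_{w-4}$ transforms via \eqref{eqn:Yweq2} and \eqref{eqn:Ywdef2}, and $G$ satisfies Proposition \ref{prop:serredernegeig}. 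So it suffices to verify \eqref{eqn:Yweq3} at $w=8$ and $w=12$ and propagate, with the constants matched using the second coefficients from (2). Finally, \eqref{eqn:Yweq4} follows from \eqref{eqn:Yweq3} by applying $\partial_w$ together with \eqref{eqn:negeigeq2} and \eqref{eqn:Ywdef2}.
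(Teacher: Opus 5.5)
Your proposal is correct and matches the paper's proof. Part (1) comes from Lemma~\ref{lem:intertwine} with $\alpha=\beta=\alpha'=\beta'=0$, $\gamma=-\frac{(w+2)(w+4)}{48}$, $\gamma'=-\frac{(w+4)(w+6)}{48}$ (these do satisfy all four constraints in \eqref{eqn:intertwine_constraints}), plus direct checks of $L_{3,4}Y_4=0$ and $L_{3,8}Y_8=0$; (2)--(4) follow by induction from \eqref{eqn:Ywdef1}, \eqref{eqn:Ywdef2}, \eqref{eqn:Ywode} and the $G_w$ recurrences, which the paper omits and you sketch in more detail.

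The only slip is cosmetic. $L_{2,w}^{\gamma}$ equals $\partial_w^2-\frac{(w+2)(w+3)}{36}E_4$ itself, which is minus the operator in \eqref{eqn:Ywdef1}, and this does not affect the argument.
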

\begin{proof}
We prove (1) using Lemma~\ref{lem:intertwine}.
Set $k = w$ and choose
\[
    (\alpha,\beta,\gamma) = \left(0,0,-\frac{(w+2)(w+4)}{48}\right), \quad
    (\alpha',\beta',\gamma') = \left(0,0,-\frac{(w+4)(w+6)}{48}\right).
\]
Then we have
\[
    L_{3,w+4} L_{2,w}^{-\frac{(w+2)(w+4)}{48}} = L_{2,w+6}^{-\frac{(w+4)(w+6)}{48}} L_{3,w}.
\]
By \eqref{eqn:Ywdef1}, $Y_{w+4}$ is a nonzero constant multiple of $L_{2,w}^{-\frac{(w+2)(w+4)}{48}} Y_w$.
The intertwining relation therefore proves \eqref{eqn:Ywode} by induction on $w$; the base cases $L_{3,4}Y_4 = 0$ and $L_{3,8}Y_8 = 0$ follow by direct computation.
Statements (2)--(4) follow by induction from \eqref{eqn:Ywdef1}, \eqref{eqn:Ywdef2}, and \eqref{eqn:Ywode}, together with the initial values and recurrence relations for $G_w$ in Theorem \ref{thm:fghnegeig} when needed; we omit the routine details.
\end{proof}

Note that the ``coefficients'' in \eqref{eqn:Ywdef1}, \eqref{eqn:Ywdef2}, \eqref{eqn:Ywode}, \eqref{eqn:Ywode2}, \eqref{eqn:Yweq1}, \eqref{eqn:Yweq2}, \eqref{eqn:Yweq3}, and \eqref{eqn:Yweq4} coincide with those in \eqref{eqn:d2eq1}, \eqref{eqn:d2eq2}, \eqref{eqn:d2ode3}, \eqref{eqn:d2ode2}, \eqref{eqn:d2eq3}, \eqref{eqn:d2eq4}, \eqref{eqn:poseigeq3}, and \eqref{eqn:poseigeq4}, respectively, after replacing $w$ with $w + 2$.
Since $Y_w$ satisfies the Kaneko--Zagier differential equation \eqref{eqn:Ywode2} for $w \equiv 0 \pmod 4$, Nakaya's computation applies to $Y_w$ as well, so that for such $w$ the function $h := E_4^{-w/4} Y_w$, in terms of the variable $x = 1728/j(z)$, satisfies the hypergeometric differential equation
\begin{equation}
    \label{eqn:hpergeom}
    \begin{aligned}
    &x^2(1 - x) h'''(x) + x \left(-\frac{w-10}{4} + \frac{w-16}{4} x\right) h''(x) \\
    &\quad + \left(-\frac{w-2}{4} - \frac{3w^2 - 60w + 320}{144}x\right) h'(x) + \frac{(w-8)(w-4)w}{1728} h(x) = 0.
    \end{aligned}
\end{equation}
From this, we obtain the following hypergeometric expression of $Y_w$:
\begin{proposition}
\label{prop:Yhypergeom}
$Y_w$ admits the following hypergeometric series expansions, where the first formula holds for $k \geq 1$ and the second for $k \geq 0$ with $k \neq 1$:
\begin{align}
    Y_{4k}(z) &= j(z)^{-k - \frac{1}{2}} E_4(z)^{k} \cdot {}_{3}F_{2} \left(\frac{4k + 3}{6}, \frac{4k+5}{6}, \frac{4k+7}{6}; k + \frac{3}{2}, k + \frac{3}{2}; \frac{1728}{j(z)}\right), \\
    Y_{4k + 2}(z) &= j(z)^{-k -\frac{1}{2}} E_4(z)^{k - 1} E_6(z) \cdot {}_{3}F_{2} \left(\frac{4k + 5}{6}, \frac{4k+7}{6}, \frac{4k+9}{6}; k + \frac{3}{2}, k + \frac{3}{2}; \frac{1728}{j(z)}\right).
\end{align}
The first identity holds for $z=it$ and $t\geq1$; the second holds for $t>1$ and extends continuously to a strictly positive value at $t=1$, although the hypergeometric series itself diverges there.
\end{proposition}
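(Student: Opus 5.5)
We follow the proof of Theorem \ref{thm:extd2nakaya}, with the hypergeometric equation \eqref{eqn:hpergeom} in place of Nakaya's equation for $X_{w,2}$.

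\textbf{Case $w=4k$.} Set $h=E_4^{-k}Y_{4k}$ and $x=1728/j$. We check directly that
\[
x^{k+\frac12}\,{}_3F_2\!\left(\tfrac{4k+3}{6},\tfrac{4k+5}{6},\tfrac{4k+7}{6};k+\tfrac32,k+\tfrac32;x\right)
\]
solves \eqref{eqn:hpergeom}. The indicial equation of \eqref{eqn:hpergeom} at $x=0$ should have roots $0$, $k+\tfrac12$, $k+\tfrac12$, the last a double root. Writing $h=x^{k+1/2}u$ and conjugating by the power produces the standard ${}_3F_2$ operator
\[
\theta\,(\theta+b_1-1)(\theta+b_2-1)-x(\theta+a_1)(\theta+a_2)(\theta+a_3),
\]
and matching its coefficients against \eqref{eqn:hpergeom} is a routine calculation.

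Next we pin down $Y_{4k}$ among the solutions. Since $1728/j=1728q+O(q^2)$, this solution behaves like $1728^{k+1/2}q^{k+1/2}$. By Proposition \ref{prop:Yproperties}(2), $Y_{4k}$ has leading term $q^{k+1/2}$, so the orders agree. It remains to rule out the logarithmic solution attached to the double root. That solution contains $\log x$, hence a term $\sim z$ that is not a $q^{1/2}$-series. However, $Y_{4k}=\wtilde{Y}_{4k}\cL_S+\Phi_{4k}$ by Proposition \ref{prop:Yproperties}(4), and $\cL_S$ is a genuine power series in $q^{1/2}$ by \eqref{eqn:LS_qexp}. So $Y_{4k}$ has no $\log q$ term. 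Together with the order at the cusp, $Y_{4k}$ is a scalar multiple of the power solution.

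The normalisation $j^{-k-1/2}E_4^k$ absorbs the factor $1728^{k+1/2}$ and matches the leading coefficient $1$. We fix the branch of $j^{-1/2}$ to be positive on $z=it$ with $t>1$. I expect this uniqueness step to be the main obstacle: it requires the double root, and it requires excluding the solutions with exponent $0$ and the logarithmic ones.

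\textbf{Case $w=4k+2$.} Apply $\partial_{4k}$ and use \eqref{eqn:Ywdef2}, which gives $\frac{4k+3}{6}Y_{4k+2}=\partial_{4k}Y_{4k}$. Repeat the computation in the proof of Theorem \ref{thm:extd2nakaya}, now with weight $4k$ and exponent $-k-\tfrac12$. We need three facts:
\[
\partial_{4k}\!\left(j^{-k-\frac12}E_4^{k}\right)=\tfrac{4k+3}{6}\,j^{-k-\frac12}E_4^{k-1}E_6,
\qquad
\left(\tfrac{1728}{j}\right)'=\tfrac{E_6}{E_4}\cdot\tfrac{1728}{j},
\]
and $\partial_4 E_4=-\tfrac13E_6$. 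With these, the expression collapses to $\bigl(1+\tfrac{x}{a_1}\tfrac{d}{dx}\bigr){}_3F_2$ with $a_1=\tfrac{4k+3}{6}$. By the contiguous relation this shifts $a_1$ to $\tfrac{4k+9}{6}$, which gives the stated formula.

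This argument only covers $k\ge1$. The remaining case $k=0$, i.e.\ $Y_2=H_2/2^4$, is checked directly: $H_2$ does satisfy \eqref{eqn:Ywode} at the appropriate weight. Alternatively, we compare the $q$-expansions of $j^{-1/2}E_4^{-1}E_6\,{}_3F_2(\tfrac56,\tfrac76,\tfrac96;\tfrac32,\tfrac32;x)$ and $Y_2$ and invoke the same uniqueness argument. The case $k=1$ is excluded because $Y_6=0$.

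\textbf{Convergence.} For $t>1$ we have $0<x<1$, so both series converge. In the first series, $b_1+b_2-a_1-a_2-a_3=2k+3-\tfrac{12k+15}{6}=\tfrac12>0$, so it converges at $x=1$, i.e.\ $t=1$. In the second, the same parametric excess is $-\tfrac12$, so the series diverges at $x=1$. The product still tends to a finite limit because $E_6(it)\to0$: the divergence is of order $(1-x)^{-1/2}$, and near $t=1$ we have $E_6^2\propto E_4^3(1-x)$. Strict positivity of the limit at $t=1$ can then be read off from the leading asymptotics, or obtained from continuity of $Y_{4k+2}$ together with \eqref{eqn:Yweq2} at $z=i$, where $E_6(i)=0$, exactly as for $X_{w+2,2}(i)$ in Theorem \ref{thm:kkd2weak}.
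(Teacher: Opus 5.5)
There are two problems: the case $k=0$ of the second formula is not actually proved, and the local exponents you predict are wrong (harmlessly). Otherwise your route is the paper's: derive \eqref{eqn:hpergeom}, identify $E_4^{-k}Y_{4k}$ with the Frobenius solution at $x=0$, then apply $\partial_{4k}$ via \eqref{eqn:Ywdef2} and use the contiguous relation. Your three auxiliary identities and your analysis at $t=1$ are correct.

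\textbf{The case $k=0$, i.e.\ $Y_2=H_2/2^4$.} Your claim that $H_2$ satisfies \eqref{eqn:Ywode} at weight $2$ is false. By \eqref{eqn:KZ3_kappa}, applying $L_{3,2}$ to $q^{n}+\cdots$ gives leading coefficient $n^3-n^2=n^2(n-1)$. At $n=\frac12$ this is $-\frac18\neq0$, so no series starting at $q^{1/2}$ is annihilated by $L_{3,2}$; moreover, \eqref{eqn:Ywode} is only asserted for $w\equiv0\pmod 4$. What $H_2$ actually satisfies is the second-order equation $L_{2,2}H_2=\partial_2^2H_2-\frac{1}{18}E_4H_2=0$. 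Your fallback of comparing $q$-expansions does not help, because the uniqueness argument needs a differential equation satisfied by both sides.

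A fix in the spirit of your proof is to put $Y_0:=-\cL_S/2^4=q^{1/2}+\cdots$.
\begin{itemize}
\item Since $L_{3,0}=L_{2,2}\circ D$ and $D\cL_S=-\frac12H_2$ by \eqref{eqn:LS_derivative}, we get $L_{3,0}Y_0=0$.
\item Your first-case argument then gives $Y_0=j^{-1/2}\,{}_3F_2(\frac12,\frac56,\frac76;\frac32,\frac32;x)$.
\item $2\partial_0Y_0=2DY_0=H_2/2^4=Y_2$ is the $w=0$ instance of \eqref{eqn:Ywdef2}, so your differentiation step with $k=0$ yields the second formula for $Y_2$.
\end{itemize}
Relatedly, \eqref{eqn:Ywdef2} is stated only for $w\ge8$. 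As written, your differentiation step therefore covers $k\ge2$, not $k\ge1$.

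\textbf{The local exponents.} In terms of $\theta=x\,\frac{\dd}{\dd x}$, equation \eqref{eqn:hpergeom} reads
\[
\theta^2\Bigl(\theta-\frac{w+2}{4}\Bigr)h = x\Bigl(\theta-\frac{w}{12}\Bigr)\Bigl(\theta-\frac{w-4}{12}\Bigr)\Bigl(\theta-\frac{w-8}{12}\Bigr)h .
\]
So for $w=4k$ the exponents at $x=0$ are $0,0,k+\frac12$. The double root, and with it the logarithmic solution, sits at $0$, while $k+\frac12$ is simple. Your substitution $h=x^{k+1/2}u$ still produces the correct ${}_3F_2$ operator, so the identification survives.

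In fact it survives more cheaply than you feared. Write $h=c_1h_1+c_2h_2+c_3h_3$, where $h_1$ is the ${}_3F_2$ solution, $h_2=1+O(x)$, and $h_3=h_2\log x+O(x)$. The mere fact that $h\to0$ as $x\to0^+$ forces $c_3=c_2=0$. Neither Proposition~\ref{prop:Yproperties}(4) nor the no-$\log q$ argument is needed, and the uniqueness step is not the obstacle you expected.
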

\begin{proof}
The proof is similar to that of Theorem \ref{thm:extd2nakaya}.
\end{proof}

Now we prove that $Y_w$ is positive for all even $w \ge 2$ with $w \ne 6$.
We need some auxiliary elementary inequalities, which can be proved by calculus.
\begin{lemma}
    \label{lem:logineq1}    
    For $x > 0$, we have
    \begin{equation}
        \label{eqn:logineq1}
        \log(1 + x) > \frac{2x}{x+2}.
    \end{equation}
\end{lemma}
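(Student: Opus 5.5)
Define $g(x) := \log(1+x) - \frac{2x}{x+2}$ on $[0,\infty)$ and show that it is strictly increasing from the value $g(0) = 0$. This is the standard calculus argument, and it is the approach suggested by the remark before the lemma that these inequalities ``can be proved by calculus''.

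First, $g$ is continuous on $[0,\infty)$ and $g(0) = \log 1 - 0 = 0$. Next, differentiate for $x > 0$:
\[
    g'(x) = \frac{1}{1+x} - \frac{2(x+2) - 2x}{(x+2)^2} = \frac{1}{1+x} - \frac{4}{(x+2)^2} = \frac{(x+2)^2 - 4(1+x)}{(1+x)(x+2)^2} = \frac{x^2}{(1+x)(x+2)^2}.
\]
This is strictly positive for every $x > 0$. By the mean value theorem, for each $x > 0$ there is some $\xi \in (0,x)$ with $g(x) = g(0) + x\,g'(\xi) = x\,g'(\xi) > 0$, which is exactly \eqref{eqn:logineq1}.

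No step is a genuine obstacle. The only point needing care is the algebraic simplification of the numerator, $(x+2)^2 - 4(1+x) = x^2$, since it is what makes the sign of $g'$ evident. As a cross-check, expanding in series gives $g(x) = x^3/12 + O(x^4)$ near $0$. This is consistent with $g'$ vanishing to second order at the origin, and it shows that the inequality is sharp to third order there.
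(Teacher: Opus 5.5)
Your proof is correct and is essentially the paper's argument: both note that the two sides agree at $x=0$ and that the derivative of the difference equals $\frac{x^2}{(x+1)(x+2)^2} > 0$ for $x > 0$. Your use of the mean value theorem simply makes explicit the step from a positive derivative to the strict inequality, which the paper leaves implicit.
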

\begin{proof}
    Both sides vanish at $x = 0$, so the claim follows from
    \[
        \frac{\dd}{\dd x} \left(\log(1+x) - \frac{2x}{x+2}\right) = \frac{x^2}{(x+1)(x+2)^2} > 0.
    \]
\end{proof}
\begin{theorem}
    \label{thm:Ywpos}
    For every even $w \ge 2$ with $w \ne 6$, we have $Y_w(it)>0$ for all $t>0$.
\end{theorem}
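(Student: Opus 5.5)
We follow the proof of Theorem \ref{thm:kkd2weak} closely, using the parallel structure recorded after Proposition \ref{prop:Yproperties}. We run an induction on multiples $w$ of $4$, proving positivity of $Y_w$ and $Y_{w+2}$ together. We treat $t>1$ (together with $t=1$) and $0<t\le 1$ separately.

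\textbf{The range $t\ge 1$.} For $t>1$ we use Proposition \ref{prop:Yhypergeom}. Here $j(it)>1728>0$, $E_4(it)>0$ and $E_6(it)>0$. The ${}_3F_2$ series have positive parameters, so all their coefficients are positive, and $0<1728/j(it)<1$. Hence both $Y_{4k}(it)$ and $Y_{4k+2}(it)$ are positive, for $k\ge1$ and for $k\ge 0$ with $k\ne1$ respectively. At $t=1$ the proposition gives positivity directly. For $Y_{4k}$ the series converges at $x=1$, since $b_1+b_2-a_1-a_2-a_3=\tfrac12>0$, giving a strictly positive value. For $Y_{4k+2}$ the proposition asserts a strictly positive limiting value. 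The case $Y_2=H_2/2^4$ is trivially positive everywhere, because $\Theta_2(it)>0$.

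\textbf{The range $0<t\le1$.} For the base cases we handle $Y_2,Y_4,Y_8,Y_{10}$ (and also $Y_{12},Y_{14}$ if needed, so that \eqref{eqn:Yweq2} can start at $w=12$). We would like a simple direct argument here. One option is to use $S$-transformation: the transformation $\cL_S(i/t)=\cL(it)$ together with the explicit theta expressions, combined with Lemma \ref{lem:logineq1} to control the logarithm term. Specifically, $-\cL_S(it)=-\log(1-\lambda(it))=\log(1+H_2/H_4)$ is positive, and it is bounded below by $\frac{2H_2}{H_2+2H_4}$. For $Y_4$, whose logarithmic term has coefficient $-\cL_S>0$ multiplied by a positive constant times $E_4$, this lower bound leaves a polynomial in $H_2,H_4$ to be checked. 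Since $H_2,H_4>0$ on the imaginary axis, that polynomial should visibly have positive coefficients after clearing denominators. $Y_8$ works the same way. For $Y_{10}$ the $\cL_S$ coefficient involves $E_6$, which changes sign at $t=1$, so instead we derive $Y_{10}$'s positivity on $0<t\le1$ from $Y_8$ via \eqref{eqn:Ywdef2} and Proposition \ref{prop:derpos}(4).

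\textbf{Inductive step.} Fix $w\ge12$ with $4\mid w$, and assume $Y_{w-2}$ and $Y_{w-4}$ are positive on $0<t\le1$. By \eqref{eqn:Yweq2}, $Y_{w+2}$ is a positive multiple of $E_4Y_{w-2}-E_6Y_{w-4}$. Since $E_4(it)>0$ and $E_6(it)\le0$ there, $Y_{w+2}(it)>0$ on $0<t\le 1$. Next, \eqref{eqn:Ywdef2} gives $\partial_wY_w=\frac{w+3}{6}Y_{w+2}$, which is positive for all $t>0$ by combining both ranges. Also $Y_w(it)>0$ at $t=1$ by the first range. Proposition \ref{prop:derpos}(4) then yields $Y_w(it)>0$ for $0<t\le1$, closing the induction. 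That proposition is stated for quasimodular forms, but its proof only uses the first-order ODE $\partial_k F=g$ along the imaginary axis, so it applies verbatim to $Y_w$. The same device also handles $Y_{w-2}$-type companions whenever $Y_{w+2}$ is known.

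\textbf{Main obstacle.} The delicate point is the base cases on $0<t\le1$, where the $\cL_S$ term is not manifestly signed. Lemma \ref{lem:logineq1} is the intended tool there. If the resulting polynomial inequality fails for $Y_8$, the fallback is to start the induction lower. We would use $Y_2,Y_4$ and derive $Y_{12},Y_{14}$ from \eqref{eqn:Ywdef1} and \eqref{eqn:Ywdef2} plus Proposition \ref{prop:derpos}(4). Positivity of $Y_8$ would then be verified at $t\to0$ via its $S$-transform leading term, using \eqref{eqn:Ywdef2} from $Y_{10}$ only near $t=1$.
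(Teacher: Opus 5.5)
Your overall architecture matches the paper's: hypergeometric expansions for $t\ge1$, then \eqref{eqn:Yweq2}, \eqref{eqn:Ywdef2} and Proposition \ref{prop:derpos}(4) on $0<t\le1$, with log-inequality base cases. However, the base case $Y_{10}$ on $0<t\le1$ has a genuine gap.

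You propose to derive it from $Y_8$ via \eqref{eqn:Ywdef2} and Proposition \ref{prop:derpos}(4). That proposition runs in the opposite direction: it infers positivity of $F$ from positivity of $\partial_kF$ (plus positivity at some $t_0$). It says nothing about $\partial_kF$ given $F$. Since $Y_{10}=\frac{6}{11}\partial_8Y_8$, positivity of $Y_8$ gives no information about $Y_{10}$. You apply the proposition correctly in your own induction step, which makes the reversal here evident.

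$Y_{10}$ cannot be skipped. The first induction step $w=12$ writes $Y_{14}$ as a positive multiple of $E_4Y_{10}-E_6Y_8$, and $Y_{12}$ is then obtained from $Y_{14}$. So the base cases the induction actually needs are $Y_8$ and $Y_{10}$, not $Y_{12},Y_{14}$. Your fallback does not repair this. \eqref{eqn:Ywdef1} involves $\partial_w^2Y_w$ with no sign control and only starts at $w=8$. And Proposition \ref{prop:derpos}(4) again cannot push positivity from $Y_w$ up to $\partial_wY_w$.

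The missing observation is that the sign change of $E_6$ helps you on exactly the range you need. For $0<t\le1$ we have $x=H_2/H_4\ge1$, since $x$ decreases in $t$ and $x(1)=1$. There $E_6(it)\le0$ and $\cL_S(it)<0$, so the logarithmic term $\frac{5E_4E_6\cL_S}{2^{17}\cdot7\cdot11}$ of $Y_{10}$ is nonnegative. Dividing by $H_4^5$, positivity of $Y_{10}$ becomes \eqref{eqn:logineqY10}. There the coefficient $-(x^2+x+1)(2x+1)(x+2)(1-x)$ of $\log(1+x)$ is nonnegative for $x\ge1$, so Lemma \ref{lem:logineq1} may be substituted. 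What remains is $\frac16x^3(22x^2+x+1)>0$. This is how the paper handles $Y_{10}$.

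A smaller point concerns $Y_8$. Your expectation that the polynomial left after applying Lemma \ref{lem:logineq1} has positive coefficients is false: it is $\frac{x^3(13x^2-2x-2)}{12(x+2)}$, which is negative for small $x$. It is positive for $x\ge1$, though, and that suffices on $0<t\le1$. The paper instead proves \eqref{eqn:logineqY8} for all $x>0$ by differentiating.
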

\begin{proof}
    The proof is similar to that of Theorem \ref{thm:kkd2weak}, where we use Proposition \ref{prop:Yhypergeom} instead of Theorem \ref{thm:extd2nakaya} and the recurrence relations \eqref{eqn:Yweq2} and \eqref{eqn:Ywdef2} instead of \eqref{eqn:d2eq4} and \eqref{eqn:d2eq2}.
    We first prove the low-weight cases $Y_2$, $Y_4$, $Y_8$, and $Y_{10}$; the last two will serve as the base cases for induction.
    Positivity of $Y_2$ is clear from its expression, since $H_2(it) > 0$ for all $t > 0$.
    For the other three cases, we first consider $Y_4$.
    By \eqref{eqn:e4theta}, we have
    \[
        Y_4(it) > 0 \Leftrightarrow - \cL_S(it) > \frac{H_2(it)(H_2(it) + 2H_4(it))}{2(H_2(it)^2 + H_2(it) H_4(it) + H_4(it)^2)}.
    \]
    If we put $x = H_2(it) / H_4(it)$, then $x > 0$, and $-\cL_S(it) = -\log(1 - \lambda(it)) = \log(1+x)$, so the above inequality is equivalent to
    \begin{equation}
        \label{eqn:logineqY4}
        \log(1 + x) > \frac{x(x+2)}{2(x^2 + x + 1)}.
    \end{equation}
    This follows from Lemma \ref{lem:logineq1}; we have
    \begin{align*}
        \log(1 + x) - \frac{x(x+2)}{2(x^2 + x + 1)} > \frac{2x}{x+2} - \frac{x(x+2)}{2(x^2 + x + 1)} = \frac{3x^3}{2(x+2)(x^2 + x + 1)} > 0.
    \end{align*}
    Similarly, by \eqref{eqn:e4theta} and \eqref{eqn:e6theta}, positivity of $Y_8$ and $Y_{10}$ follows from the inequalities
    \begin{align}
        (x^2+x+1)^2 \log(1+x) &> \frac{1}{12} (11 x^4 + 28 x^3 + 18 x^2 + 12 x) \label{eqn:logineqY8} \\
        -(x^2+x+1)(2x+1)(x+2)(1-x) \log(1+x) &> \frac{1}{6} (2 x^5 + 11 x^4 - x^3 - 24 x^2 - 12 x). \label{eqn:logineqY10}
    \end{align}
    For the first inequality \eqref{eqn:logineqY8}, divide both sides by $(x^2+x+1)^2$; then both sides vanish at $x = 0$, and the claim follows from
    \[
        \frac{\dd}{\dd x} \left(\log(1 + x) - \frac{11x^4 + 28x^3 + 18x^2 + 12x}{12(x^2 + x + 1)^2}\right) = \frac{x^4 (2x^2 + 7x + 7)}{2(x+1)(x^2 + x + 1)^3} > 0.
    \]
    For the second inequality \eqref{eqn:logineqY10}, note that we only need to prove it for $x \ge 1$, which corresponds to $0 < t \le 1$, since the case $t > 1$ follows from Proposition \ref{prop:Yhypergeom}.
    Indeed, $\frac{\dd}{\dd t} \cL_S(it) = \pi H_2(it) > 0$ by \eqref{eqn:LS_derivative}, so $\lambda(it) = 1 - e^{\cL_S(it)}$ is strictly decreasing in $t$; by \eqref{eqn:lambda_def}, $x = \lambda(it) / (1 - \lambda(it))$ is then also strictly decreasing in $t$, and $x(1) = 1$ since $\lambda(i) = \frac{1}{2}$ by \eqref{eqn:lambdaS_def}.
    For $x \ge 1$ we have $1 - x \le 0$, so the coefficient $-(x^2+x+1)(2x+1)(x+2)(1-x)$ of $\log(1+x)$ is nonnegative, and Lemma \ref{lem:logineq1} gives
    \begin{align*}
        &-(x^2+x+1)(2x+1)(x+2)(1-x) \log(1+x) - \frac{1}{6} (2 x^5 + 11 x^4 - x^3 - 24 x^2 - 12 x) \\
        &\quad\ge -(x^2+x+1)(2x+1)(x+2)(1-x) \cdot \frac{2x}{x+2} - \frac{1}{6} (2 x^5 + 11 x^4 - x^3 - 24 x^2 - 12 x) \\
        &\quad= \frac{1}{6}x^3(22x^2 +x + 1) > 0.
    \end{align*}

    The induction step is similar to that in the proof of Theorem \ref{thm:kkd2weak}.
    One thing to note is that we can still apply Proposition \ref{prop:derpos} (4) even if $Y_{w}$ are not quasimodular forms in general.
    In fact, if $Y_{w+2}$ is positive, then $\partial_w Y_w$ is also positive by \eqref{eqn:Yweq2}, and hence
    \[
        \frac{\dd}{\dd t}\left(\frac{Y_w(it)}{\Delta(it)^{w/12}}\right)
        =-2\pi\frac{(\partial_w Y_w)(it)}{\Delta(it)^{w/12}}<0.
    \]
    Because $Y_w(i)>0$, this monotonicity implies $Y_w(it)>0$ for $0<t<1$.
    Thus the induction step establishes positivity of both $Y_w$ and $Y_{w+2}$; starting with $Y_8$ and $Y_{10}$ proves the claim in every higher weight.
\end{proof}

Now, positivity of $G_{w}$ follows from that of $Y_{w}$.

\begin{corollary}
\label{cor:negeigpos}
For all even $w \geq 8$, $G_{w}$ is positive.
\end{corollary}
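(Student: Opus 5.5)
The plan is to mirror the proof of Theorem~\ref{thm:poseigpos}, with $Y_w$ playing the role of $X_{w,2}$ and the identities of Proposition~\ref{prop:Yproperties}~(3) replacing those of Proposition~\ref{prop:ext2poseig} and Proposition~\ref{prop:serrederposeig}. The induction runs on multiples $w\ge 8$ of $4$ and proves simultaneously that $G_w$ and $G_{w+2}$ are positive.

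\textbf{Base cases.} From the explicit formulas, $G_8=\frac{H_2^3}{2^{13}}(H_2+2H_4)$ and $G_{10}=\frac{H_2^3}{2^{12}\cdot 5}(2H_2^2+5H_2H_4+5H_4^2)$ are visibly positive, because $H_2(it)>0$ and $H_4(it)>0$ for $t>0$. Note that $H_4(it)=\Theta_4(it)^4>0$ since $\Theta_4(it)$ is real and nonvanishing. The same reasoning gives positivity of $G_{14}$. For $G_{12}$ I would rather not treat it directly. Instead I would use \eqref{eqn:Yweq3} at $w=8$, which is stated to hold: it expresses $G_8$, and I want the analogous statements at weights $12$ and $14$. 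Using \eqref{eqn:Yweq4} at $w=12$ gives $G_{14}$, and \eqref{eqn:negeigeq1} at $w=12$ gives $G_{14}$ as well. Thus $G_{12}$ is the awkward case, and I would handle it from $G_{14}$ via the derivative argument below. So the base cases are $G_8$ and $G_{10}$.

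\textbf{Induction step.} Let $w\ge 12$, $w\equiv 0\pmod 4$, and assume $G_{w-4}$ and $G_{w-2}$ are positive.

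For $0<t\le 1$, use \eqref{eqn:negeigeq1}. Its prefactor $\frac{(w-6)(w-2)}{768(w-5)(w-3)}$ is positive, $E_4(it)>0$, and $E_6(it)\le 0$. Hence $G_{w+2}(it)>0$, with strictness coming from the term $E_4G_{w-2}$.

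For $t>1$, use \eqref{eqn:Yweq4}. Both coefficients $\frac{2(w-1)}{3(w-2)}$ and $\frac{w-4}{3(w-2)}$ are positive, $E_4(it)>0$ and $E_6(it)>0$ on this range, and $Y_{w-2}$ and $Y_{w-4}$ are positive by Theorem~\ref{thm:Ywpos} (their weights are $\ge 8$, so the excluded weight $6$ never occurs). Therefore $G_{w+2}(it)>0$ for all $t>0$.

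\textbf{Positivity of $G_w$.} Next I deduce positivity of $G_w$ from \eqref{eqn:negeigeq2}, which says $\partial_w G_w=\frac{w-3}{6}G_{w+2}$ with $\frac{w-3}{6}>0$. Since $G_w$ is not quasimodular, Proposition~\ref{prop:derpos}~(4) cannot be quoted verbatim. I would repeat the argument used in the proof of Theorem~\ref{thm:Ywpos}:
\[
\frac{\dd}{\dd t}\frac{G_w(it)}{\Delta(it)^{w/12}}=-2\pi\frac{(\partial_w G_w)(it)}{\Delta(it)^{w/12}}<0,
\]
so $G_w(it)/\Delta(it)^{w/12}$ is strictly decreasing in $t$. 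As $t\to\infty$, the leading coefficient of $G_w$ is $1$ by the normalization in Theorem~\ref{thm:fghnegeig}, so this quotient behaves like a positive multiple of $e^{-2\pi t(\text{ord}-w/12)}$ and is positive for large $t$. Monotonicity then gives positivity for all $t>0$. This handles $G_{12}$ as well, since \eqref{eqn:negeigeq2} at $w=12$ holds and $G_{14}$ is positive.

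\textbf{Main obstacle.} The step I expect to need the most care is the validity range of the identities. Proposition~\ref{prop:serredernegeig} is stated for $w\ge 8$, but \eqref{eqn:negeigeq1} at $w=8$ involves $G_4$. \eqref{eqn:Yweq4} is available only for $w\ge 12$. So the $t>1$ half of the step producing $G_{10}$ from $G_4$ and $G_6$ is not covered, and this is why I take $G_8$ and $G_{10}$ as explicit base cases. I also need to check that the derivative trick applies even though $G_w$ contains $\cL_S$. This works because the Serre-derivative identity $D(f/\Delta^{w/12})=(\partial_w f)/\Delta^{w/12}$ is purely formal.
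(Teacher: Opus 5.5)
Your proposal is correct and matches the paper's proof: induction on $w\equiv 0 \pmod 4$ with $G_8$ and $G_{10}$ as explicit base cases, positivity of $G_{w+2}$ on $0<t\le 1$ from \eqref{eqn:negeigeq1}, and on $t>1$ from \eqref{eqn:Yweq4} together with Theorem~\ref{thm:Ywpos}. Positivity of $G_w$, including $G_{12}$, then follows from \eqref{eqn:negeigeq2} via the monotonicity of $G_w(it)/\Delta(it)^{w/12}$ and positivity for large $t$, which is the adaptation of Proposition~\ref{prop:derpos}~(4) to non-quasimodular $G_w$ that the paper invokes without spelling out.
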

\begin{proof}
    The proof is similar to that of Theorem \ref{thm:poseigpos}.
    We will use induction on $w \equiv 0 \pmod{4}$, where we assume that $G_{w-2}$ and $G_{w-4}$ are positive.
    By the same elementary argument as in Proposition \ref{prop:derpos} (4), together with \eqref{eqn:negeigeq2}, it is enough to show positivity for $G_{w+2}$ with $w \equiv 0 \pmod{4}$ and $w \geq 12$ (positivity of $G_w$ for $w = 4,6,8,10$ is clear from their expressions, since $H_2 = \Theta_2^4$ and $H_4 = \Theta_4^4$ are positive on the imaginary axis).
    As in the proof of Theorem \ref{thm:poseigpos}, we can still apply Proposition \ref{prop:derpos} (4) even if $G_w$ are not quasimodular forms in general.
\end{proof}

\begin{remark}
Based on experimental evidence, we conjecture that $Y_{w}$ and $G_{w}$ are completely positive; that is, all their Fourier coefficients are nonnegative.
This may be viewed as a corrected version of the conjecture proposed in \cite[Remark 6.4]{feigenbaum2021eigenfunctions}.
It may be possible to prove that all but finitely many coefficients are positive for each $G_{w}$ using Jenkins--Pratt's coefficient bounds for level 2 modular forms \cite{jenkins2014coefficient}.
\end{remark}

\subsubsection{Nonpositivity of $M_{d,-}(\mathbf{0})$}
\label{subsec:nonposorig_neg}

As in the case of $M_{d,+}(\mathbf{0})$, we can express $M_{d,-}(\mathbf{0})$ in terms of the Fourier coefficients of a modular form related to $G_{w}$.

\begin{proposition}
\label{prop:Gw_comp}
For each even $w \geq 4$, there exist a level 1 modular form $\widetilde{G}_{w-12}$ of weight $w-12$ and a level $\Gamma(2)$ modular form $\Psi_w$ of weight $w$ such that
\begin{equation}
    \label{eqn:Gw_decomp}
    G_w = \widetilde{G}_{w-12} \Delta \cL_S + \Psi_w.
\end{equation}
In particular, $\widetilde{G}_{w} = 0$ for $w = -8, -6, -4, -2, 2$ and $\widetilde{G}_{0} = \frac{3}{2^{11} \cdot 7}$, and for $w \equiv 0 \pmod{4}$ with $w \geq 0$, we have
\begin{equation}
    \label{eqn:Gtilde_rec}
    \widetilde{G}_{w+4} = \frac{3(w+10)(w+14)}{16(w+4)(w+9)(w+11)(w+16)} \left(\frac{(w+8)(w+9)}{36} E_4 \widetilde{G}_{w} - \partial_{w}^{2} \widetilde{G}_{w}\right)
\end{equation}
\end{proposition}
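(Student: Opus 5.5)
Prove existence of the decomposition \eqref{eqn:Gw_decomp} and the recurrence \eqref{eqn:Gtilde_rec} together, by induction on $w$ along the recurrences \eqref{eqn:negeigrec2} and \eqref{eqn:negeigrec4}. The base cases come straight from the explicit formulas in Theorem \ref{thm:fghnegeig}. For $w=4,6,8,10,14$ there is no $\cL_S$ term, so $\wtilde{G}_{w-12}=0$ (covering $w-12=-8,-6,-4,-2,2$) and $\Psi_w=G_w$ is a polynomial in $H_2,H_4$. For $w=12$ we read off $\wtilde{G}_0=\frac{3}{2^{11}\cdot 7}$. The listed $G_{16}$ and $G_{18}$ give $\wtilde{G}_4=\frac{5E_4}{2^{18}\cdot 11}$ and $\wtilde{G}_6=-\frac{5E_6}{2^{17}\cdot 11\cdot 13}$, which serve as checks.

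The key computation is the action of $\partial_k$ on a product $A\,\Delta\,\cL_S$, where $A$ is a level-$1$ modular form of weight $k-12$. Since $\partial_{12}\Delta=0$, the product rule gives
\[
\partial_k(A\Delta\cL_S)=(\partial_{k-12}A)\Delta\cL_S + A\Delta\cL_S'=(\partial_{k-12}A)\Delta\cL_S-\tfrac12 A\Delta H_2 ,
\]
using $\cL_S'=-\frac12H_2$ from \eqref{eqn:LS_derivative}. The second term is a holomorphic level-$\Gamma(2)$ modular form of weight $k+2$. Hence the space of functions of the form $(\text{level-}1\text{ form})\cdot\Delta\cL_S+(\Gamma(2)\text{ form})$ is stable under every $\partial_k$ of the correct weight and under multiplication by $E_4$ and $E_6$. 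The operator maps the $\cL_S$-coefficient by the corresponding Serre derivative with the weight shifted by $12$. Applying this twice in \eqref{eqn:negeigrec4} shows that $\partial_w^2 G_w$ has $\cL_S$-coefficient $(\partial_{w-12}^2\wtilde{G}_{w-12})\Delta$ plus a $\Gamma(2)$ remainder. The same applies to \eqref{eqn:negeigrec2} for $G_{w+2}$ with $w\equiv0\pmod 4$, so the decomposition propagates to all even $w\ge4$.

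For existence we also need the decomposition to be unique, so that the $\cL_S$-coefficient is well defined. I would argue this by monodromy: under $z\mapsto z+2$ (or a suitable element of $\Gamma(2)$ fixing $\lambda_S$'s branch structure), $\cL_S$ changes by a nonzero additive constant while modular forms do not. Alternatively, $\cL_S$ is not algebraic over the field of modular functions. Comparing $\cL_S$-coefficients in \eqref{eqn:negeigrec4} at weight $w+12$, with $u=w+12$, gives
\[
\wtilde{G}_{w+4}=\frac{3(u-2)(u+2)}{16(u-8)(u-3)(u-1)(u+4)}\left(\frac{(u-4)(u-3)}{36}E_4\wtilde{G}_w-\partial_w^2\wtilde{G}_w\right).
\]
Substituting $u=w+12$ turns this into exactly \eqref{eqn:Gtilde_rec}, since $(u-2)(u+2)=(w+10)(w+14)$, $(u-8)(u-3)(u-1)(u+4)=(w+4)(w+9)(w+11)(w+16)$ and $(u-4)(u-3)=(w+8)(w+9)$. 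Validity requires \eqref{eqn:negeigrec4} at weight $w+12\ge12$, which holds for all $w\ge0$ with $w\equiv0\pmod4$.

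The main obstacle is making the uniqueness and monodromy argument rigorous. It must be checked that $\Psi_w$ is genuinely holomorphic of level $\Gamma(2)$, and that no $\cL_S$-free term can secretly contain $\cL_S$. I would settle this by analytic continuation around the cusp $0$, where $\lambda_S\to0$ and $\cL_S$ picks up $2\pi i$ per loop, using that $\Delta\wtilde{G}$ is single-valued.
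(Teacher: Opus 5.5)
Your proposal is correct and follows essentially the paper's proof: existence comes from induction along \eqref{eqn:negeigrec2} and \eqref{eqn:negeigrec4} using $\partial_{12}\Delta=0$ and $\cL_S'=-\frac12 H_2$, uniqueness of the decomposition is established, and then the $\cL_S$-coefficients of \eqref{eqn:negeigrec4} at weight $w+12$ are compared (the paper gets uniqueness from your ``not algebraic'' alternative, namely that $-B/A$ would be a $\Gamma(2)$-modular function and hence rational in $\lambda$). One correction to your monodromy variant: $\cL_S$ is a series in $q^{1/2}$, so it is invariant under $z\mapsto z+2$ and that example fails; the element that works is the parabolic $\left[\begin{smallmatrix}1&0\\2&1\end{smallmatrix}\right]\in\Gamma(2)$ fixing the cusp $0$, as in your last paragraph, which sends $\cL_S$ to $\cL_S-2\pi i$, and slashing $A\cL_S+B=0$ by it gives $A=0$ directly.
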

\begin{proof}
    The recurrence relations \eqref{eqn:negeigrec2} and \eqref{eqn:negeigrec4}, together with \eqref{eqn:LS_derivative}, show by induction on $w$ that $G_w$ can be written in the form \eqref{eqn:Gw_decomp} (note that $\partial_{12} \Delta = 0$).
    To prove \eqref{eqn:Gtilde_rec}, it is enough to show that the decomposition \eqref{eqn:Gw_decomp} is unique; equivalently, if $A$ is a level 1 modular form and $B$ is a level $\Gamma(2)$ modular form of the same weight, then $A \cL_S + B = 0$ only when $A=B=0$.
    If $A$ and $B$ are nonzero, then $\cL_S = -\frac{B}{A}$ must be a modular function for $\Gamma(2)$, and hence a rational function of the modular lambda function $\lambda$.
    In other words, there exists a rational function $R \in \bC(x)$ such that
    \[
    \cL_S(z) = \log (1 - \lambda(z)) = R(\lambda(z)),
    \]
    for all $z \in \bH$, which is impossible because $\log(1 - x)$ is not rational in $x$.
\end{proof}

The following proposition shows that $\wtilde{G}_w$ satisfies a third-order modular linear differential equation.
\begin{proposition}
    \label{prop:Gtilde_ode}
    For all $w \equiv 0 \pmod{4}$ and $w \geq 0$, $\widetilde{G}_{w}$ satisfies the third-order ordinary differential equation
    \begin{equation}
        \label{eqn:Gtilde_ode}
        L_{3,w}^{(-\frac{w+6}{4},0)} \wtilde{G}_w
        = \partial_w^3 \wtilde{G}_w - \frac{3w^2 + 48w + 224}{144} E_4 \partial_w \wtilde{G}_w - \frac{w(w+9)(w+12)}{864} E_6 \wtilde{G}_w = 0,
    \end{equation}
    or equivalently,
    \begin{equation}
        \label{eqn:Gtilde_ode2}
        \wtilde{G}_w''' - \frac{w+2}{4} E_2 \wtilde{G}_w'' + \left(\frac{(w+1)(w+2)}{4}E_2' - \frac{w+6}{4} E_4\right) \wtilde{G}_w' - \left(\frac{w(w+1)(w+2)}{24} E_2'' - \frac{w(w+6)}{16} E_4'\right) \wtilde{G}_w = 0.
    \end{equation}
\end{proposition}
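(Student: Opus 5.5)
We argue by induction on $w \equiv 0 \pmod 4$, $w \ge 0$, mirroring part (1) of Proposition~\ref{prop:Yproperties}: the intertwining criterion (Lemma~\ref{lem:intertwine}) transports the third-order equation along the recurrence \eqref{eqn:Gtilde_rec}. The equivalence of \eqref{eqn:Gtilde_ode} and \eqref{eqn:Gtilde_ode2} is just the rewriting \eqref{eqn:KZ3_def}$\leftrightarrow$\eqref{eqn:KZ3_def_serre} with $k=w$, $\alpha=-\frac{w+6}{4}$, $\beta=0$. Indeed, $\alpha-\frac{3w^2+12w+8}{144}=-\frac{3w^2+48w+224}{144}$ and $\frac{w\alpha}{12}-\frac{w^2(w+3)}{864}=-\frac{w(w+9)(w+12)}{864}$, so it suffices to prove $L_{3,w}^{(-\frac{w+6}{4},0)}\wtilde G_w=0$.

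\emph{Base case.} For $w=0$, $\wtilde G_0=\frac{3}{2^{11}\cdot 7}$ is a constant. Since $\partial_0 c=0$, the operator reduces to $-\frac{0\cdot 9\cdot 12}{864}E_6\wtilde G_0=0$, so \eqref{eqn:Gtilde_ode} holds trivially.

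\emph{Inductive step.} Rewrite \eqref{eqn:Gtilde_rec} as
\[
\wtilde G_{w+4} = -c\,L_{2,w}^{\gamma}\wtilde G_w, \qquad c=\frac{3(w+10)(w+14)}{16(w+4)(w+9)(w+11)(w+16)}\neq 0 .
\]
By \eqref{eqn:KZ2_def}, $\gamma$ is determined by $\frac{w(w+2)}{144}-\gamma=\frac{(w+8)(w+9)}{36}$, i.e. $\gamma=\frac{w(w+2)-4(w+8)(w+9)}{144}$, and in the notation of Lemma~\ref{lem:intertwine} we get $C=-\frac{(w+8)(w+9)}{36}$. Take $k=w$, $(\alpha,\beta)=\bigl(-\frac{w+10}{4},0\bigr)$, which is the operator for weight $w+4$, and $(\alpha',\beta')=\bigl(-\frac{w+6}{4},0\bigr)$, which is the operator for weight $w$. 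Then determine $\gamma'$ from the first constraint $A+C=A'+C'$.

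The main task is to check the remaining three constraints in \eqref{eqn:intertwine_constraints} as polynomial identities in $w$:
\[
B-C=B'-\tfrac{2}{3}A',\qquad C\bigl(A+\tfrac12\bigr)=A'\bigl(C'+\tfrac16\bigr)-B',\qquad C\bigl(B-\tfrac{A}{3}-\tfrac19\bigr)=B'\bigl(C'+\tfrac13\bigr).
\]
Each side is an explicit rational, in fact polynomial, expression in $w$, so this is routine algebra. I expect it to be the only real work and would verify it symbolically, as was done in Sage.

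Once the constraints hold, the intertwining gives
\[
L_{3,w+4}^{(-\frac{w+10}{4},0)}\wtilde G_{w+4} = -c\,L_{2,w+6}^{\gamma'}\,L_{3,w}^{(-\frac{w+6}{4},0)}\wtilde G_w = 0
\]
by the induction hypothesis, which completes the induction.

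If some constraint fails for this choice, the fallback is to use the freedom in $\gamma'$, or to apply the consistency check directly to the Serre normal forms \eqref{eqn:intertwine_lhs} and \eqref{eqn:intertwine_rhs}. What must vanish is only the operator applied to $\wtilde G_w$, not the operator identity itself. One could also verify the ODE directly from the decomposition \eqref{eqn:Gw_decomp}: apply the operator of \eqref{eqn:negeigde} to $\wtilde G_{w-12}\Delta\cL_S+\Psi_w$ and use \eqref{eqn:LS_derivative} together with the uniqueness of the decomposition to extract the $\cL_S$-coefficient. However, this route changes the weight parameters and is messier, so I would try the intertwining route first.
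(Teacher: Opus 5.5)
Your proposal is correct and is essentially the paper's proof. It uses the same induction from the constant $\wtilde{G}_0$ and the same application of Lemma~\ref{lem:intertwine} with $(\alpha,\beta)=(-\frac{w+10}{4},0)$, $(\alpha',\beta')=(-\frac{w+6}{4},0)$ and $\gamma=-\frac{(w+6)(w+16)}{48}$. The first constraint then forces $\gamma'=-\frac{(w+10)(w+16)}{48}$, equivalently $C'=-\frac{(w+11)(w+12)}{36}$, which is exactly the paper's choice; with it the other three constraints hold identically in $w$ (for instance $B-\frac{A}{3}-\frac{1}{9}=-\frac{w(w+12)(w+15)}{864}$ and $C'+\frac{1}{3}=-\frac{(w+8)(w+15)}{36}$), so your fallback is never needed.
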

\begin{proof}
    The following triples of parameters
    \[
        (\alpha,\beta,\gamma) = \left(-\frac{w+10}{4},0,-\frac{(w+6)(w+16)}{48}\right), \quad
        (\alpha',\beta',\gamma') = \left(-\frac{w+6}{4},0,-\frac{(w+10)(w+16)}{48}\right)
    \]
    satisfy the four conditions in Lemma~\ref{lem:intertwine}.
    Define $\cM_w := L_{3,w}^{(-\frac{w+6}{4},0)}$ and $\cN_w := L_{2,w}^{-\frac{(w+6)(w+16)}{48}} = \partial_w^2 - \frac{(w+8)(w+9)}{36} E_4$.
    By Lemma~\ref{lem:intertwine}, we have the following intertwining relation:
    \[
        \cM_{w+4} \cN_w = L_{2,w+6}^{-\frac{(w+10)(w+16)}{48}} \cM_w.
    \]
    By \eqref{eqn:Gtilde_rec}, $\wtilde{G}_{w+4}$ is a constant multiple of $\cN_w \wtilde{G}_w$.
    Hence \eqref{eqn:Gtilde_ode} follows from the intertwining relation and induction on $w$, where $\cM_0 \wtilde{G}_0 = 0$.
\end{proof}

We have $\cL_S(i/t) = \cL(it)$; using \eqref{eqn:L_qexp} and $w \equiv 0 \pmod 4$, we can write $G_{w}(i/t)$ as
\begin{align}
    t^{-w} G_w\left(\frac{i}{t}\right) &= t^{-w} \widetilde{G}_{w-12}\left(\frac{i}{t}\right) \Delta\left(\frac{i}{t}\right) \cL_S\left(\frac{i}{t}\right) + t^{-w} \Psi_w\left(\frac{i}{t}\right) \\
    &= \widetilde{G}_{w-12}(it) \Delta(it) \log \lambda(it) + (\Psi_w|_{w}S)(it) \\
    &= \widetilde{G}_{w-12}(it) \Delta(it) \left(-\pi t + 4 \log 2 + \sum_{k \ge 1}(-1)^k \frac{r_4(k)}{k} e^{-\pi k t}\right) + (\Psi_w|_{w}S)(it)
\end{align}
Thus, the integrand in \eqref{eqn:Mdneg} can be expressed as
\begin{align}
    \phi_{-}(t) &:= \frac{t^{-w} G_{w}(i/t)}{\Delta(it)^{n_{d,-}}} \\
    &= -\pi t \frac{\widetilde{G}_{w-12}(it)}{\Delta(it)^{n_{d,-} - 1}} + \frac{\widetilde{G}_{w-12}(it)\Delta(it)(\log \lambda(it) + \pi t) + (\Psi_w|_w S)(it)}{\Delta(it)^{n_{d,-}}}
\end{align}
where the numerator of the second term is $2$-periodic.
Write the Fourier expansions as
\begin{equation*}
    \frac{\widetilde{G}_{w-12}(z)}{\Delta(z)^{n_{d,-} - 1}} = \sum_{n \ge 1 - n_{d,-}} b_{n,-} q^n, \quad \frac{\widetilde{G}_{w-12}(z)\Delta(z)(\log \lambda(z) - \pi i z) + (\Psi_w|_w S)(z)}{\Delta(z)^{n_{d,-}}} = \sum_{n \ge - 2n_{d,-}} c_{n,-} q^{\frac{n}{2}}
\end{equation*}
and define the truncation $\widetilde{\phi}_{-}(t)$ of $\phi_{-}(t)$ as
\begin{equation}
    \widetilde{\phi}_{-}(t) := \phi_{-}(t) - \left(-\pi t \sum_{1-n_{d,-} \le k \le 0} b_{k,-} e^{- 2 \pi k t} + \sum_{-2n_{d,-} \le k \le 0} c_{k,-} e^{-\pi k t} \right).
\end{equation}
Then \eqref{eqn:Mdneg} can be rewritten as
\begin{align*}
    M_{d,-}(\mathbf{x}) &= 4 \sin^2\left(\frac{\pi\|\bx\|^2}{2}\right) \int_0^\infty \widetilde{\phi}_{-}(t) e^{-\pi \|\bx\|^2 t} \dd t \\
    &\quad + 4 \sin^2\left(\frac{\pi\|\bx\|^2}{2}\right) \left(-\pi \sum_{1-n_{d,-} \le k \le 0} b_{k,-} \int_0^\infty t e^{-\pi(\|\bx\|^2 + 2k) t} \dd t + \sum_{-2n_{d,-} \le k \le 0} c_{k,-} \int_0^\infty e^{-\pi(\|\bx\|^2 + k) t} \dd t\right) \\
    &= 4 \sin^2\left(\frac{\pi\|\bx\|^2}{2}\right) \int_0^\infty \widetilde{\phi}_{-}(t) e^{-\pi \|\bx\|^2 t} \dd t \\
    &\quad + 4 \sin^2\left(\frac{\pi \|\bx\|^2}{2}\right) \left[-\pi \sum_{1-n_{d,-} \le k \le 0} b_{k,-} \cdot \frac{1}{\pi^2(\|\bx\|^2 + 2k)^2} + \sum_{-2n_{d,-} \le k \le 0} c_{k,-} \cdot \frac{1}{\pi(\|\bx\|^2 + k)}\right]
\end{align*}
which analytically continues to $\mathbf{x} = \mathbf{0}$.
This gives
\begin{equation}
    \label{eqn:Mdnegzero}
    M_{d,-}(\mathbf{0}) = -\pi \cdot 4 \cdot \frac{\pi^2}{4} \cdot \frac{b_{0,-}}{\pi^2} = -\pi b_{0,-}.
\end{equation}
Thus, $M_{d,-}(\mathbf{0}) \le 0$ if and only if $b_{0,-} \ge 0$.
Write the Fourier expansion of $\widetilde{G}_{w-12}$ as
\begin{equation}
\widetilde{G}_{w-12} = \sum_{n \ge 0} \tilde{a}_{n,-}^{(w-12)} q^n.
\end{equation}
As in the case of $M_{d,+}(\mathbf{0})$, $b_{0, -}$ is given by
\begin{equation}
    \label{eqn:bzero_neg}
    b_{0,-} = [q^{n_{d,-} - 1}] \left(\sum_{n \ge 0} p_{24(n_{d,-} - 1)}(n) q^n\right) \left(\sum_{n \ge 0} \tilde{a}_{n,-}^{(w-12)} q^n\right) = \sum_{j=0}^{n_{d,-} - 1} p_{24(n_{d,-} - 1)}(n_{d,-} - 1 - j) \tilde{a}_{j,-}^{(w-12)}
\end{equation}
where $p_k(n)$ is defined as in \eqref{eqn:pkn}, hence $b_{0,-} \ge 0$ if $\tilde{a}_{n,-}^{(w-12)} \ge 0$ for all $0 \le n \le n_{d,-} - 1$.
Since $n_{d,-}$ is either $\frac{w}{4} - 2$ or $\frac{w}{4} - 1$, it is enough to show that $\tilde{a}_{n,-}^{(w)} \ge 0$ for all $0 \le n \le \frac{w}{4} + 1$.
We consider $0 \le n \le \frac{w}{4}$ and the boundary case $n = \frac{w}{4} + 1$ simultaneously, following the same general strategy as in the proof of Proposition \ref{prop:Ftilde_pos}.

\begin{proposition}
    \label{prop:Gtilde_pos}
    For all $w \equiv 0 \pmod{4}$ with $w \ge 0$, we have 
    \begin{equation}
        \tilde{a}_{n,-}^{(w)} \ge 0 \quad \text{for all } 0 \le n \le \frac{w}{4}.
    \end{equation}
\end{proposition}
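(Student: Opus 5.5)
We argue by induction on $w \equiv 0 \pmod 4$, mirroring the proof of Proposition~\ref{prop:Ftilde_pos}. The engine is the recurrence \eqref{eqn:Gtilde_rec}, together with Lemma~\ref{lem:KZ2_coeff}, which expresses each coefficient of $\wtilde{G}_{w+4}$ in terms of lower coefficients of $\wtilde{G}_w$.

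First write the operator in \eqref{eqn:Gtilde_rec} as
\[
\frac{(w+8)(w+9)}{36}E_4 - \partial_w^2 = -L_{2,w}^{\alpha_w},
\]
with $\alpha_w$ chosen so that $\frac{w(w+2)}{144} - \alpha_w = \frac{(w+8)(w+9)}{36}$, which gives $\alpha_w = -\frac{(w+6)(w+16)}{48}$, as in the definition of $\cN_w$.

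Lemma~\ref{lem:KZ2_coeff} then gives
\[
[q^n]\,\wtilde{G}_{w+4} = c_w\Bigl(-\kappa_{2,w}^{\alpha_w}(n)\,\tilde a^{(w)}_{n,-} - \sum_{j<n} K_{2,w}^{\alpha_w}(n,j)\,\tilde a^{(w)}_{j,-}\Bigr),
\qquad
c_w = \frac{3(w+10)(w+14)}{16(w+4)(w+9)(w+11)(w+16)} > 0 .
\]
Next compute the two kernels explicitly:
\[
\kappa_{2,w}^{\alpha_w}(n) = n^2 - \frac{w+1}{6}n - \frac{(w+6)(w+16)}{48} = \frac{(4n-w-6)(12n+w+16)}{48},
\]
\[
K_{2,w}^{\alpha_w}(n,j) = 2(w+1)\bigl(2j - w(n-j)\bigr)\sigma_1(n-j) - 5(w+6)(w+16)\,\sigma_3(n-j).
\]

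The sign analysis runs as follows.
\begin{itemize}
\item For $0 \le n \le \frac{w+4}{4}+1 = \frac{w}{4}+2$ we have $4n - w - 6 \le 2$. The factor $4n-w-6$ is negative exactly when $n \le \frac{w}{4}+1$, so $-\kappa \ge 0$ on that range.
\item For $0 \le j < n$ and $n \le \frac{w}{4}+1$, we have $w(n-j) \ge w \ge 2j$ whenever $j \le n-1 \le \frac{w}{4}$. Hence the $\sigma_1$ term is $\le 0$, and the $\sigma_3$ term is strictly negative, so $-K > 0$.
\end{itemize}

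Therefore, if $\tilde a^{(w)}_{j,-} \ge 0$ for all $0 \le j \le \frac{w}{4}+1$, every coefficient $[q^n]\wtilde{G}_{w+4}$ with $n \le \frac{w}{4}+1 = \frac{w+4}{4}$ is nonnegative. This is exactly the statement for weight $w+4$.

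The catch is that the hypothesis must reach index $\frac{w}{4}+1$, one step beyond the range in the statement at weight $w$. So I would strengthen the induction to $0 \le n \le \frac{w}{4}+1$, i.e. prove the statement jointly with the boundary case (Proposition~\ref{prop:Gtilde_pos_boundary}). The same computation pushes one index further, to $n = \frac{w}{4}+2$, where $\kappa_{2,w}^{\alpha_w}(n)$ changes sign: $4n-w-6 = 2 > 0$, so $-\kappa < 0$. At that index I need an extra lower bound, as with the $\frac{1}{360}$ trick in Proposition~\ref{prop:Ftilde_pos}.

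To handle it, I would carry a quantitative hypothesis of the form $\tilde a^{(w)}_{\frac{w}{4}+1,-} \ge \mu_w\,\tilde a^{(w)}_{\frac{w}{4}+2,-}$, or more naturally a lower bound on $\tilde a^{(w)}_{\frac{w}{4}+1,-}$ relative to the top coefficient. The idea is that the large negative kernel term $-K(\frac{w}{4}+2,\frac{w}{4}+1)$, of order $w^2$, dominates the small positive $\kappa(\frac{w}{4}+2) = \frac{2(4w+40)}{48}$. If this strengthening is unwieldy, an alternative is to use the third-order equation \eqref{eqn:Gtilde_ode} at weight $w$ via Lemma~\ref{lem:KZ3_coeff}. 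That expresses $\tilde a^{(w)}_{n,-}$ as a combination of lower coefficients with $\kappa_{3,w}^{(-(w+6)/4,0)}(n)$ as the pivot, and the sign of the pivot can be checked in the needed range.

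The base cases are $\wtilde{G}_0 = \frac{3}{2^{11}\cdot 7} > 0$ and $\wtilde{G}_4 = c_0\cdot\frac{8\cdot 9}{36}E_4\wtilde{G}_0$, both completely positive, with $\wtilde{G}_{8}$ checked directly if needed.

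The main obstacle is this boundary index, where the diagonal coefficient changes sign. Nonnegativity for $n \le \frac{w}{4}$ is routine once the inductive range is enlarged by one. Controlling $n = \frac{w}{4}+1$ uniformly in $w$ requires either an explicit comparison inequality between the last two coefficients, propagated through the recurrence, or the third-order MLDE, and this is where a polynomial inequality in $w$ will need to be verified.
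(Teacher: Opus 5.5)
Your treatment of the range $0\le n\le \frac{w}{4}+1$ matches the paper's: the same $\alpha_w=-\frac{(w+6)(w+16)}{48}$, and the same sign analysis of $\kappa_{2,w}^{\alpha_w}$ and $K_{2,w}^{\alpha_w}$. You also correctly see that the induction closes only if it is run jointly with the boundary assertion $\tilde a^{(w)}_{\frac w4+1,-}\ge 0$ (Proposition~\ref{prop:Gtilde_pos_boundary}), which is how the paper organizes it.

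The gap is that you never prove that boundary assertion. The hypothesis you invoke at index $\frac w4+1$ is therefore unsupported, and the argument for the stated proposition is incomplete. Your first suggested route, a propagated ratio bound $\tilde a_{\frac w4+1}\ge\mu_w\tilde a_{\frac w4+2}$, does not close by itself. To re-establish the ratio at weight $w+4$ you need an upper bound on $\tilde a^{(w+4)}_{\frac w4+3,-}$. By \eqref{eqn:Gtilde_coeff_rec} this coefficient contains the term $-\kappa_{2,w}^{\alpha_w}(\frac w4+3)\,\tilde a^{(w)}_{\frac w4+3,-}$ with $\kappa_{2,w}^{\alpha_w}(\frac w4+3)>0$. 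So you would need a lower bound on a coefficient two steps beyond your controlled range, and each step enlarges the set of coefficients you must control.

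Your second route is the one that works, and it is exactly what the paper does; the missing content is carrying it out. At $n=\frac w4+2$, Lemma~\ref{lem:KZ3_coeff} applied to \eqref{eqn:Gtilde_ode} has pivot $\frac{n(n+1)(4n-w-6)}{4}=\frac{(w+8)(w+12)}{32}>0$. Hence $\tilde a^{(w)}_{\frac w4+2,-}=-\frac{32}{(w+8)(w+12)}\sum_{j\le \frac w4+1}K'_w(\frac w4+2,j)\,\tilde a^{(w)}_{j,-}$. Substituting this into \eqref{eqn:Gtilde_coeff_rec}, with $\kappa_{2,w}^{\alpha_w}(\frac w4+2)=\frac{w+10}{6}$, expresses $\tilde a^{(w+4)}_{\frac w4+2,-}$ as a combination of the $\tilde a^{(w)}_{j,-}$, $0\le j\le \frac w4+1$, with weights
$K''_w(j)=\frac{16(w+10)}{3(w+8)(w+12)}K'_w(\frac w4+2,j)-K_{2,w}^{\alpha_w}(\frac w4+2,j)$.

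What must be shown is $K''_w(j)\ge 0$ for all these $j$, and this is not a formality. At $j=\frac w4+1$ the MLDE kernel $K'_w$ is negative, of order $-w^3/8$. Positivity there, namely $K''_w=\frac{(w+4)(16w^2+409w+2484)}{3(w+12)}$, comes only from the $5(w+6)(w+16)\sigma_3$ term of $-K_{2,w}^{\alpha_w}$. For $r=\frac w4+2-j\ge 2$ one writes $K''_w=A_w(r)\sigma_1(r)+B_w(r)\sigma_3(r)$ and checks that both polynomial coefficients are nonnegative. Only the single index $\frac w4+2$ needs the MLDE, so no further range of pivot signs has to be checked. This verification is what your proposal needs in order to actually prove the statement.
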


The boundary case $n = \frac{w}{4} + 1$ is more delicate: in the induction step one must evaluate \eqref{eqn:Gtilde_coeff_rec} at $n = \frac{w}{4} + 2$, where $\kappa_{2,w}^{\alpha_w}(n) > 0$, so it is not immediately clear that the right-hand side is nonnegative.
In this case, we use the differential equation \eqref{eqn:Gtilde_ode2} to express $\tilde{a}_{\frac{w}{4}+2,-}^{(w)}$ as a linear combination of $\tilde{a}_{n,-}^{(w)}$ for $0 \le n \le \frac{w}{4}+1$.
\begin{proposition}
    \label{prop:Gtilde_pos_boundary}
    For all $w \equiv 0 \pmod{4}$ with $w \ge 0$, we have
    \begin{equation}
        \tilde{a}_{\frac{w}{4}+1,-}^{(w)} \ge 0.
    \end{equation}
\end{proposition}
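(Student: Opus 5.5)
The plan is induction on $w \equiv 0 \pmod 4$, run alongside Proposition~\ref{prop:Gtilde_pos}. The obstruction is that the next coefficient $\tilde a^{(w)}_{\frac{w}{4}+2,-}$ enters the recurrence with the wrong sign; I will eliminate it using the third-order equation \eqref{eqn:Gtilde_ode2}.

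\emph{Base case.} For $w=0$, $\wtilde G_0=\frac{3}{2^{11}\cdot 7}$ is constant, so $\tilde a^{(0)}_{1,-}=0$. I would also verify $w=4$ and $w=8$ directly from \eqref{eqn:Gtilde_rec}, since the kernel estimates below are crude for very small $w$.

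\emph{Induction step.} Assume $\tilde a^{(w)}_{j,-}\ge 0$ for all $0\le j\le \frac w4+1$; the range $j\le \frac w4$ is Proposition~\ref{prop:Gtilde_pos}, and $j=\frac w4+1$ is the hypothesis. Write $a_j:=\tilde a^{(w)}_{j,-}$ and $n:=\frac w4+2$, which is the index $\frac{w+4}{4}+1$ we must control for $\wtilde G_{w+4}$.

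\begin{enumerate}
\item \emph{Expand the recurrence.} By \eqref{eqn:Gtilde_rec}, $\wtilde G_{w+4}=-c_w\,\cN_w\wtilde G_w$ with $c_w>0$ and $\cN_w=L_{2,w}^{\gamma_w}$, where $\gamma_w=-\frac{(w+6)(w+16)}{48}$. Lemma~\ref{lem:KZ2_coeff} then gives
\[
\tilde a^{(w+4)}_{n,-}=-c_w\Bigl(\kappa_{2,w}^{\gamma_w}(n)\,a_n+\sum_{j<n}K_{2,w}^{\gamma_w}(n,j)\,a_j\Bigr).
\]
Here $\kappa_{2,w}^{\gamma_w}(n)>0$, so the unknown $a_n$ appears with a negative sign.

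\item \emph{Solve for $a_n$ from the ODE.} Apply Lemma~\ref{lem:KZ3_coeff} to \eqref{eqn:Gtilde_ode}, with $k=w$, $\alpha=-\frac{w+6}{4}$, $\beta=0$, at this same $n$. This gives
\[
\kappa_3(n)\,a_n=-\sum_{j<n}K_3(n,j)\,a_j,
\qquad \kappa_3(n)=n^3-\tfrac{w+2}{4}n^2-\tfrac{w+6}{4}n .
\]
With $n=\frac w4+2$ one gets $\kappa_3(n)=n\bigl(n^2-(n-\tfrac32)n-(n+\tfrac12)\bigr)=n\bigl(\tfrac n2-\tfrac12\bigr)$, so $\kappa_3(n)>0$ and $a_n$ can be eliminated.

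\item \emph{Combine.} Substituting gives
\[
\tilde a^{(w+4)}_{n,-}=c_w\sum_{j=0}^{n-1}\Bigl(\frac{\kappa_2(n)}{\kappa_3(n)}K_3(n,j)-K_2(n,j)\Bigr)a_j .
\]
Since every $a_j$ with $j\le n-1=\frac w4+1$ is nonnegative, it suffices to show that each combined kernel
\[
\mathcal K(n,j):=\kappa_2(n)K_3(n,j)-\kappa_3(n)K_2(n,j)
\]
is nonnegative. Here $m=n-j\ge1$.
\end{enumerate}

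\emph{Main obstacle: the sign of $\mathcal K(n,j)$.} Each kernel $\mathcal K(n,j)$ is an explicit combination of $\sigma_1(m)$, $\sigma_3(m)$ and $\sigma_5(m)$, with coefficients polynomial in $w$ and $j$. The $\sigma_5$ term vanishes because $\beta=0$.

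\begin{itemize}
\item \emph{Case $m=1$.} All divisor sums equal $1$, and $\mathcal K(n,n-1)$ should reduce to a polynomial in $w$ that can be factored.
\item \emph{Case $m\ge2$.} I would first compute the sign of the coefficient of $\sigma_3(m)$, which I expect to be the dominant, positive term. The $\sigma_1$ term would then be absorbed using $\sigma_3(m)\ge m^2\sigma_1(m)$, which holds because $\sigma_3(m)\ge m^3$ while $\sigma_1(m)\le m(1+\log m)$. The remaining task is to check a polynomial inequality in $(w,j,m)$ over $0\le j\le n-1$.
\end{itemize}

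If some kernels are negative for $j$ near $0$, the fallback is to keep the positive contribution of the large coefficients near $j=\frac w4$ and use a lower bound for them, in the manner of the $\frac1{360}$ device in Proposition~\ref{prop:Ftilde_pos}. That would require strengthening the induction hypothesis accordingly.
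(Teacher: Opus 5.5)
Your reduction is the paper's own. At $n=\frac w4+2$ you use the coefficient relation from \eqref{eqn:Gtilde_ode} to eliminate $\tilde a^{(w)}_{n,-}$ from \eqref{eqn:Gtilde_coeff_rec}, and your combined kernel $\kappa_2(n)K_3(n,j)-\kappa_3(n)K_2(n,j)$ equals $\kappa_3(n)$ times the paper's $K_w''(j)$. There is a small slip: $\frac{w+6}{4}=n-\frac12$, not $n+\frac12$, so $\kappa_3(n)=\frac{n(n+1)}{2}=\frac{(w+8)(w+12)}{32}$; the positivity conclusion is unaffected. The real problem is the decisive step, nonnegativity of every combined kernel, which you leave unverified. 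The tool you propose for it is false. Since $d^3\le m^2 d$ for every $d\mid m$, one has $\sigma_3(m)\le m^2\sigma_1(m)$, strictly for $m\ge 2$ (e.g.\ $9<12$ at $m=2$). The bounds $\sigma_3(m)\ge m^3$ and $\sigma_1(m)\le m(1+\log m)$ do not give the reverse inequality. As written, your case $m\ge 2$ does not go through.

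The gap closes because no absorption is needed; you just have to do the computation. With $r=n-j$, the paper writes $K_w''(j)=A_w(r)\sigma_1(r)+B_w(r)\sigma_3(r)$, where $B_w(r)=\frac{5(w+4)(w+6)}{(w+8)(w+12)}(w^2+16wr+16w+160r+64)>0$. Substituting $r=u+2$ turns the polynomial factor of $A_w(r)$ into a polynomial in $u,w$ with nonnegative coefficients, so both coefficients are nonnegative for $r\ge 2$. The $\sigma_1$-coefficient can be negative only at $r=1$ (e.g.\ $A_0(1)=-4$), that is at $j=\frac w4+1$, the top index rather than $j$ near $0$. That is exactly your case $m=1$, where $A_w(1)+B_w(1)=\frac{(w+4)(16w^2+409w+2484)}{3(w+12)}>0$. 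Hence every kernel is strictly positive for all $w\ge 0$. The fallback with a strengthened hypothesis (in the style of the $\frac1{360}$ bound) is unnecessary, and the base case $w=0$ alone suffices, with no separate checks at $w=4,8$.
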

We prove Propositions \ref{prop:Gtilde_pos} and \ref{prop:Gtilde_pos_boundary} together.
\begin{proof}
    We prove the two statements simultaneously by induction on $w$.
    Let $\alpha_w = -\frac{(w+6)(w+16)}{48}$.
    By \eqref{eqn:Gtilde_rec}, the $n$-th Fourier coefficient of $\wtilde{G}_{w+4}$ is
    \begin{equation}
        \label{eqn:Gtilde_coeff_rec}
        \tilde{a}_{n,-}^{(w+4)} = \frac{3(w+10)(w+14)}{16(w+4)(w+9)(w+11)(w+16)} \left(-\kappa_{2,w}^{\alpha_w}(n) \tilde{a}_{n,-}^{(w)} - \sum_{j=0}^{n-1} K_{2,w}^{\alpha_w}(n,j) \tilde{a}_{j,-}^{(w)}\right),
    \end{equation}
    where
    \begin{align}
        \kappa_{2,w}^{\alpha_w}(n) &= \frac{(12n + w + 16)(4n-w-6)}{48}, \label{eqn:kappa_G} \\
        K_{2,w}^{\alpha_w}(n,j) &= -2(w+1)(w(n-j) - 2j) \sigma_1(n-j) - 5(w+6)(w+16) \sigma_3(n-j). \label{eqn:K_G}
    \end{align}
    By applying Lemma \ref{lem:KZ3_coeff} to \eqref{eqn:Gtilde_ode}, we obtain a linear relation
    \begin{equation}
        \label{eqn:Gtilde_coeff_linear}
        \frac{n(n+1)(4n-w-6)}{4} \tilde{a}_{n,-}^{(w)} + \sum_{j=0}^{n-1} K_w'(n,j) \tilde{a}_{j,-}^{(w)} = 0
    \end{equation}
    where
    \begin{equation}
        \label{eqn:Kprime}
        \begin{aligned}
        K_w'(n,j) &= (w+2)(6j^2 - 6(w+1)(n-j)j + w(w+1)(n-j)^2) \sigma_1(n-j) \\
        &\quad + 15(w+6)(w(n-j) - 4j) \sigma_3(n-j).
        \end{aligned}
    \end{equation}
    For $w=0$, the form $\wtilde{G}_0=\frac{3}{2^{11}\cdot7}$ is a positive constant, so $\tilde{a}_{0,-}^{(0)}>0$ and $\tilde{a}_{1,-}^{(0)}=0$.
    This establishes both assertions at the base weight $w=0$.
    Assume both assertions hold at weight $w$.
    For $0\leq n\leq \frac{w}{4}+1$, the induction hypotheses give $\tilde{a}_{n,-}^{(w)}\geq0$, and \eqref{eqn:kappa_G} gives $\kappa_{2,w}^{\alpha_w}(n)<0$.
    Furthermore, if $0\leq j\leq n-1$, then
    \[
        w(n-j)-2j=(w+2)(n-j)-2n\geq w+2-2n\geq \frac{w}{2}\geq0,
    \]
    so \eqref{eqn:K_G} gives $K_{2,w}^{\alpha_w}(n,j)<0$.
    It follows from \eqref{eqn:Gtilde_coeff_rec} that $\tilde{a}_{n,-}^{(w+4)}\geq0$ for $0\leq n\leq \frac{w}{4}+1$, proving Proposition \ref{prop:Gtilde_pos} at weight $w+4$.

    It remains to prove the boundary assertion at weight $w+4$.
    For $n = \frac{w}{4} + 2$, \eqref{eqn:Gtilde_coeff_linear} becomes
    \begin{equation}
        \tilde{a}_{\frac{w}{4}+2,-}^{(w)} = - \frac{32}{(w+8)(w+12)} \sum_{j=0}^{\frac{w}{4}+1} K_w'\left(\frac{w}{4}+2,j\right) \tilde{a}_{j,-}^{(w)}.
    \end{equation}
    By combining this with $\kappa_{2,w}^{\alpha_w}\left(\frac{w}{4}+2\right) = \frac{w+10}{6}$, \eqref{eqn:Gtilde_coeff_rec} gives
    \begin{align*}
        \tilde{a}_{\frac{w}{4}+2,-}^{(w+4)} &= \frac{3(w+10)(w+14)}{16(w+4)(w+9)(w+11)(w+16)} \left(-\frac{w+10}{6} \tilde{a}_{\frac{w}{4}+2,-}^{(w)} - \sum_{j=0}^{\frac{w}{4}+1} K_{2,w}^{\alpha_w}\left(\frac{w}{4}+2,j\right) \tilde{a}_{j,-}^{(w)}\right) \\
        &= \frac{3(w+10)(w+14)}{16(w+4)(w+9)(w+11)(w+16)} \sum_{j=0}^{\frac{w}{4}+1} \left(\frac{16(w+10)}{3(w+8)(w+12)} K_w' \left(\frac{w}{4}+2,j\right) - K_{2,w}^{\alpha_w}\left(\frac{w}{4}+2,j\right)\right) \tilde{a}_{j,-}^{(w)},
    \end{align*}
    so it is enough to show that
    \begin{equation}
        \label{eqn:Kprime2}
        K_w''(j) := \frac{16(w+10)}{3(w+8)(w+12)} K_w' \left(\frac{w}{4}+2,j\right) - K_{2,w}^{\alpha_w}\left(\frac{w}{4}+2,j\right)
    \end{equation}
    is positive for all $0 \le j \le \frac{w}{4} + 1$.
    Let $r = \frac{w}{4} + 2 - j$, so that $1 \le r \le \frac{w}{4} + 2$.
    Then $K_w''(j)$ can be written in the form
    \begin{equation}
        K_w''(j) = A_w(r) \sigma_1(r) + B_w(r) \sigma_3(r)
    \end{equation}
    where
    \begin{align}
        A_w(r) &= \frac{16(w+10)}{3(w+8)(w+12)}\cdot (w+2)\left(6\left(\frac{w}{4}+2-r\right)^2 - 6(w + 1) r\left(\frac{w}{4}+2-r\right) + w(w+1) r^2\right) \nonumber \\
        &\quad+ 2(w+1)\left(wr- 2\left(\frac{w}{4}+2-r\right)\right) \nonumber \\
        &= \frac{w+4}{3(w+8)(w+12)} (16w^3r^2 - 18w^3 r + 3w^3 + 240 w^2 r^2 - 342 w^2r + 69w^2 \nonumber \\
        &\qquad\qquad\qquad\qquad\qquad + 896wr^2 - 1908 wr + 528 w + 960r^2 - 2592r + 1344) \label{eqn:Aw} \\
        B_w(r) &= \frac{16(w+10)}{3(w+8)(w+12)} \cdot 15(w+6)\left(wr-4\left(\frac{w}{4}+2-r\right)\right) + 5(w+6)(w+16) \nonumber \\
        &= \frac{5(w+4)(w+6)}{(w+8)(w+12)} (w^2 + 16wr + 16w + 160r + 64) \label{eqn:Bw}
    \end{align}
    It is clear that $B_w(r) > 0$ for all $r \ge 1$ from \eqref{eqn:Bw}.
    If $r \ge 2$, substituting $r$ with $u + 2$ in the second factor of \eqref{eqn:Aw} gives
    \[
    16w^3 u^2 + 46w^3 u + 31w^3 + 240 w^2 u^2 + 618w^2u + 345w^2 + 896wu^2 + 1676 wu + 296 w + 960 u^2 + 1248u
    \]
    which is nonnegative.
    When $r = 1$, $\sigma_1(1) = \sigma_3(1) = 1$ and
    \[
    K_w''\left(\frac{w}{4}+1\right) = A_w(1) + B_w(1) = \frac{(w+4)(16w^2 + 409w + 2484)}{3(w+12)} > 0.
    \]
    Hence $K_w''(j) > 0$ for all $0 \le j \le \frac{w}{4} + 1$.
    The induction step for the boundary assertion follows, completing the simultaneous induction.
\end{proof}

\begin{corollary}
    \label{cor:Apos_4mod8}
    For $d \equiv 4 \pmod{8}$, we have
    \[
    \rA_+(d) \le \sqrt{2n_{d,-}} = \sqrt{2\left(\left\lfloor \frac{d}{16} \right\rfloor + 1\right)},
    \]
    where the inequality is strict for $d \ne 12$.
\end{corollary}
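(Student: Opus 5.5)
The plan is to copy the proof of Corollary \ref{cor:Apos_0mod8}, using $M_{d,-}$ from Theorem \ref{thm:fghnegeig} with $w = w_{d,-}$ and $n_{d,-}$ as in \eqref{eqn:wdneg}. Since $d \equiv 4 \pmod 8$, we have $(-1)^{d/4+1} = 1$, so $M_{d,-}$ is a $+1$-eigenfunction of the Fourier transform.

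First, I will show that $M_{d,-}(\bx) \ge 0$ for $\|\bx\| > \sqrt{2n_{d,-}}$. By \eqref{eqn:Mdneg}, the integrand is $t^{-w}G_w(i/t)/\Delta(it)^{n_{d,-}}$. Since $w \equiv 0 \pmod 4$, applying the $S$-transformation to the real variable gives $t^{-w}G_w(i/t) = G_w|_S$ evaluated at a point of the imaginary axis. More precisely, the variable $i/t$ ranges over the positive imaginary axis, so this is $G_w(is)$ with $s = 1/t$, up to the factor $t^{-w} = s^{w} > 0$. Corollary \ref{cor:negeigpos} gives $G_w(is) > 0$, and $\Delta(it) > 0$. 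Hence the integral is positive, and $M_{d,-} \ge 0$ there, with zeros only at $\|\bx\|^2 \in 2\bZ$. The weight $w = w_{d,-}$ is at least $8$ in both residue classes, so the corollary applies. Concretely, $d = 12$ gives $w = 8$, and $d = 4$ gives $w = 12$.

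Next, I will show that $M_{d,-}(\mathbf{0}) \le 0$. By \eqref{eqn:Mdnegzero}, $M_{d,-}(\mathbf{0}) = -\pi b_{0,-}$, and by \eqref{eqn:bzero_neg}, $b_{0,-}$ is a positive combination of $\tilde a^{(w-12)}_{j,-}$ for $0 \le j \le n_{d,-}-1$. Here $n_{d,-}-1 \le \frac{w}{4}-1 = \frac{w-12}{4}+2$. Propositions \ref{prop:Gtilde_pos} and \ref{prop:Gtilde_pos_boundary} cover $j \le \frac{w-12}{4}+1$.

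The index $\frac{w-12}{4}+2$ occurs only when $n_{d,-} = \frac{w}{4}-1$, i.e. $d \equiv 12 \pmod{16}$. For this residue class I need to recheck the bookkeeping. From \eqref{eqn:wdneg}, $w = \frac d4 + 5$ and $n_{d,-} = \frac{d+4}{16}$. Then $n_{d,-}-1 = \frac{d-12}{16}$ and $\frac{w-12}{4} = \frac{d-28}{16}$, so $n_{d,-}-1 = \frac{w-12}{4}+1$, which is covered. (The stated equality $n = \frac w4 - 1$ would need verification, but in any case the range is handled as the paper claims.) For $d \equiv 4 \pmod{16}$, $n_{d,-}-1 = \frac{w}{4}-3 = \frac{w-12}{4}$, which is also covered.

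For $d = 12$ we have $w = 8$, so $\widetilde G_{-4} = 0$. Then $b_{0,-} = 0$ and $M_{d,-}(\mathbf{0}) = 0$. In this case $f = M_{d,-}$ itself satisfies $f = \what f$, $f(\mathbf 0)=0$, and $r(f) \le \sqrt{2n_{d,-}} = \sqrt 2$. This gives the non-strict bound, which is optimal by Cohn--Gon\c{c}alves.

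For $d \ne 12$, strictness requires $b_{0,-} > 0$. This is the main obstacle. I plan to show $\tilde a^{(w-12)}_{0,-} > 0$: the constant term is strictly positive inductively, because \eqref{eqn:Gtilde_coeff_rec} at $n = 0$ gives $\tilde a_{0}^{(w+4)} = c\cdot(-\kappa(0))\,\tilde a_0^{(w)}$ with $-\kappa(0) > 0$, starting from $\widetilde G_0 > 0$. The term $j = 0$ in \eqref{eqn:bzero_neg} then has coefficient $p_{24(n_{d,-}-1)}(n_{d,-}-1) > 0$, so $b_{0,-} > 0$ whenever $w - 12 \ge 0$, which holds for all $d \ne 12$.

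Finally, set $f(\bx) = M_{d,-}(\bx) - M_{d,-}(\mathbf 0)e^{-\pi\|\bx\|^2}$. Then $f$ is self-dual, $f(\mathbf 0) = 0$, and $f$ is strictly positive slightly inside radius $\sqrt{2n_{d,-}}$, because $M_{d,-}$ has a simple sign-changing zero there, $M'_{d,-}(\sqrt{2n_{d,-}}) \ne 0$, and the Gaussian term added is positive. This gives $\rA_+(d) < \sqrt{2n_{d,-}}$. Since $n_{d,-} = \lfloor d/16\rfloor + 1$, the corollary follows.
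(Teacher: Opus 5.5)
Your proposal is correct and follows the paper's proof essentially step for step. It uses positivity of $M_{d,-}$ beyond $\sqrt{2n_{d,-}}$ from Corollary~\ref{cor:negeigpos}, nonnegativity of $b_{0,-}$ from Propositions~\ref{prop:Gtilde_pos} and~\ref{prop:Gtilde_pos_boundary}, strict positivity of $\tilde a^{(w-12)}_{0,-}$ for $w\ge 12$ via the $n=0$ case of \eqref{eqn:Gtilde_coeff_rec}, and the Gaussian correction $f=M_{d,-}-M_{d,-}(\mathbf 0)e^{-\pi\|\bx\|^2}$. There are two small slips: since $n_{d,-}\le \frac{w}{4}-1$ you have $n_{d,-}-1\le \frac{w-12}{4}+1$, so $n_{d,-}=\frac w4-1$ for $d\equiv 12 \pmod{16}$ is correct as your recheck shows; and positivity of $f$ just inside $\sqrt{2n_{d,-}}$ comes from $f=-M_{d,-}(\mathbf 0)e^{-\pi\|\bx\|^2}>0$ on that sphere plus continuity, not from the simple zero of $M_{d,-}$, which actually makes $M_{d,-}$ negative just inside.
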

\begin{proof}
    As in the proof of Corollary \ref{cor:Apos_0mod8}, Corollary \ref{cor:negeigpos}, Propositions \ref{prop:Gtilde_pos} and \ref{prop:Gtilde_pos_boundary}, and \eqref{eqn:bzero_neg} show that
    \[
        f(\bx) := M_{d,-}(\bx) - M_{d,-}(\mathbf{0}) e^{-\pi \|\bx\|^2}
    \]
    lies in $\mathcal{A}_{+}(d)$ and satisfies $r(f) = r(\what{f}) \le \sqrt{2n_{d,-}}$.

    It remains to prove strictness when $d \neq 12$: if $b_{0,-} > 0$, then $M_{d,-}(\mathbf{0}) = -\pi b_{0,-} < 0$ by \eqref{eqn:Mdnegzero}, so $f(\bx) = -M_{d,-}(\mathbf{0}) e^{-\pi \|\bx\|^2} > 0$ on the sphere $\|\bx\| = \sqrt{2n_{d,-}}$ where $M_{d,-}$ vanishes, and continuity gives $r(f) < \sqrt{2n_{d,-}}$.
    Every term in \eqref{eqn:bzero_neg} is nonnegative, so it suffices to show that the $j = 0$ term $p_{24(n_{d,-} - 1)}(n_{d,-} - 1)\, \tilde{a}_{0,-}^{(w - 12)}$ is positive.
    The partition factor is always positive, and taking $n = 0$ in \eqref{eqn:Gtilde_coeff_rec} gives
    \[
        \tilde{a}_{0,-}^{(w + 4)} = \frac{(w+6)(w+10)(w+14)}{256(w+4)(w+9)(w+11)}\, \tilde{a}_{0,-}^{(w)},
    \]
    so $\tilde{a}_{0,-}^{(0)} = \frac{3}{2^{11} \cdot 7} > 0$ implies that $\tilde{a}_{0,-}^{(w)} > 0$ for all $w \equiv 0 \pmod{4}$ with $w \geq 0$.
    Since \eqref{eqn:wdneg} gives $w_{d,-} \geq 12$ for every $d \equiv 4 \pmod{8}$ with $d \neq 12$, we conclude that $b_{0,-} > 0$, which completes the proof.
    (For $d = 12$, we have $w_{d,-} = 8$ and $\wtilde{G}_{-4} = 0$, so $b_{0,-} = 0$ and $M_{12,-}(\mathbf{0}) = 0$.)
\end{proof}

Combining Corollaries \ref{cor:Apos_0mod8} and \ref{cor:Apos_4mod8} proves Theorem \ref{thm:main}.

\begin{remark}
    Based on experimental evidence, we conjecture that $\widetilde{G}_w$ is completely positive for all $w \equiv 0 \pmod{4}$.
\end{remark}

\section*{Appendix}
\renewcommand{\thesubsection}{\Alph{subsection}}
\renewcommand{\theHsubsection}{appendix.\arabic{subsection}}
\setcounter{subsection}{0}

All Lean and Sage code can be found in the GitHub repository \url{https://github.com/seewoo5/posqmf}.

\subsection{Lean}
\label{subsec:lean}

Some of the results in this paper have been formalized in Lean 4 with the help of \texttt{Claude Opus 5 / Fable 5}.
The main purpose of the formalization is to verify the routine but lengthy computations, so that readers can focus on the main ideas of the proofs.
In particular, we formalized results on Kaneko--Zagier operators (Lemmas \ref{lem:KZ2_coeff}, \ref{lem:KZ3_coeff}, and \ref{lem:intertwine}) and on the positivity of the coefficients of $\wtilde{F}_{w-2}$ and $\wtilde{G}_w$ (Propositions \ref{prop:Ftilde_pos}, \ref{prop:Gtilde_pos}, and \ref{prop:Gtilde_pos_boundary}).
The Lean code can be found under the \texttt{posqmf/lean/QuasiModularForms} and \texttt{posqmf/lean/UncertaintyPrinciple} directories of the GitHub repository.

\subsubsection{Quasimodular forms}

To formalize the families $F_w$, $\wtilde{F}_{w-2}$, and $\wtilde{G}_w$, we first developed the required theory of level 1 quasimodular forms.
\texttt{mathlib} already contains formalizations of modular forms, the weight two Eisenstein series $E_2$, and the derivative and Serre derivative operators $D$ and $\partial_k$ on functions $F : \bH \to \bC$; see \href{https://leanprover-community.github.io/mathlib4_docs/Mathlib/NumberTheory/ModularForms/Derivative.html#Derivative.normalizedDerivOfComplex}{\lstinline|normalizedDerivOfComplex|} and \href{https://leanprover-community.github.io/mathlib4_docs/Mathlib/NumberTheory/ModularForms/Derivative.html#Derivative.serreDerivative}{\lstinline|serreDerivative|}.
However, we chose to introduce different models for the following reasons:
\begin{itemize}
  \item Defining the family $\wtilde{F}_{w-2}$ requires the operator $\delta = \frac{\partial}{\partial E_2}$ on the ring of quasimodular forms, which is difficult to express in the function-theoretic framework above.
  \item We can define $E_2$, $E_4$, and $E_6$ directly as power series ($q$-series), and define $D$ and $\partial_k$ at the level of power series; this is better suited to our purposes.
\end{itemize}
We therefore define $E_2$, $E_4$, and $E_6$ in two ways: as power series and as generators of a polynomial ring.
For the power-series model, we use \eqref{eqn:e2fourier}, \eqref{eqn:e4fourier}, and \eqref{eqn:e6fourier} as the definitions of $E_2$, $E_4$, and $E_6$.
We define the derivative and Serre derivative at the level of power series by $D = q \frac{\dd}{\dd q}$, as in \eqref{eqn:D_def}, and $\partial_k = D - \frac{k}{12} E_2$; these are \lstinline|QExpansion.D| and \lstinline|QExpansion.serreD|, respectively.

\begin{lstlisting}[language=lean]
open ArithmeticFunction Finset PowerSeries
open scoped sigma

namespace QExpansion

noncomputable section

def qSigma (k : ℕ) : ℝ⟦X⟧ := mk fun n ↦ (σ k n : ℝ)

def E₂ : ℝ⟦X⟧ := 1 - (24 : ℝ) • qSigma 1

def E₄ : ℝ⟦X⟧ := 1 + (240 : ℝ) • qSigma 3

def E₆ : ℝ⟦X⟧ := 1 - (504 : ℝ) • qSigma 5

def D (f : ℝ⟦X⟧) : ℝ⟦X⟧ := mk fun n ↦ (n : ℝ) * coeff n f

def serreD (k : ℝ) (f : ℝ⟦X⟧) : ℝ⟦X⟧ := D f - (k / 12) • (E₂ * f)
\end{lstlisting}

For this model, we take Ramanujan's identities \eqref{eqn:ramanujan} as \textbf{axioms}, formalized as \lstinline|ramanujan_E₂|, \lstinline|ramanujan_E₄|, and \lstinline|ramanujan_E₆|.
These are the only additional axioms used in the formalization, beyond Lean's standard axioms \lstinline|propext|, \lstinline|Classical.choice|, and \lstinline|Quot.sound|.

\begin{lstlisting}[language=lean]
namespace QExpansion

axiom ramanujan_E₂ : D E₂ = (1 / 12 : ℝ) • (E₂ * E₂ - E₄)

axiom ramanujan_E₄ : D E₄ = (1 / 3 : ℝ) • (E₂ * E₄ - E₆)

axiom ramanujan_E₆ : D E₆ = (1 / 2 : ℝ) • (E₂ * E₆ - E₄ * E₄)
\end{lstlisting}

For the second model, we use the algebraic independence of $E_2$, $E_4$, and $E_6$ over $\mathbb{C}$ \cite[Lemma~117, p.~70]{martin2005formes}; in particular, the same holds over $\mathbb{R}$. We thus represent the ring they generate by the polynomial ring $\mathbb{R}[E_2,E_4,E_6]$, formalized below as \lstinline|QM|.
Here $D$ is defined from Ramanujan's identities by extending its values on the generators to the entire polynomial ring using \lstinline|mkDerivation|, while $\partial_k$ is defined as $D - \frac{k}{12} E_2$ (\lstinline|PolynomialModel.D| and \lstinline|PolynomialModel.serreD|).
We define $\delta$ as the partial derivative with respect to $E_2$ (\lstinline|PolynomialModel.delta|) and the weight-multiplication operator as the weighted Euler operator $H = 2E_2 \frac{\partial}{\partial E_2} + 4E_4 \frac{\partial}{\partial E_4} + 6E_6 \frac{\partial}{\partial E_6}$ (\lstinline|PolynomialModel.eulerOp|).
We also formalized the relevant $\mathfrak{sl}_2$-relations among $D$, $\delta$, and $H$ as \lstinline|PolynomialModel.sl2_lie_h_e|, \lstinline|PolynomialModel.sl2_lie_h_f|, and \lstinline|PolynomialModel.sl2_lie_e_f|, as well as Lemma \ref{lem:delta_serre} as \lstinline|PolynomialModel.delta_serreD|.

\begin{lstlisting}[language=lean]
open MvPolynomial

namespace PolynomialModel

abbrev QM := MvPolynomial (Fin 3) ℝ

noncomputable section

def E₂ : QM := X 0

def E₄ : QM := X 1

def E₆ : QM := X 2

def dGen : Fin 3 → QM
  | 0 => (1 / 12 : ℝ) • (E₂ * E₂ - E₄)
  | 1 => (1 / 3 : ℝ) • (E₂ * E₄ - E₆)
  | 2 => (1 / 2 : ℝ) • (E₂ * E₆ - E₄ * E₄)

def D : Derivation ℝ QM QM := mkDerivation ℝ dGen

def serreD (k : ℝ) (G : QM) : QM := D G - (k / 12 : ℝ) • (E₂ * G)

def delta : Derivation ℝ QM QM := pderiv 0

def eulerOp : Derivation ℝ QM QM :=
  ((2 : ℝ) • E₂) • pderiv 0 + ((4 : ℝ) • E₄) • pderiv 1 + ((6 : ℝ) • E₆) • pderiv 2

def HasWeight (G : QM) (w : ℝ) : Prop := eulerOp G = w • G

theorem sl2_lie_h_e : ⁅eulerOp, D⁆ = (2 : ℝ) • D := by ...

theorem sl2_lie_h_f : ⁅eulerOp, (-12 : ℝ) • delta⁆ = (-2 : ℝ) • ((-12 : ℝ) • delta) := by ...

theorem sl2_lie_e_f : ⁅D, (-12 : ℝ) • delta⁆ = eulerOp := by ...

theorem delta_serreD {G : QM} {w : ℝ} (k : ℝ) (h : HasWeight G w) :
  delta (serreD k G) = serreD k (delta G) + ((w - k) / 12 : ℝ) • G := by ...
\end{lstlisting}

For an element of \lstinline|QM|, one obtains its $q$-expansion by evaluating it at the $q$-expansions of $E_2$, $E_4$, and $E_6$.
This map connects the two models and is formalized as \lstinline|PolynomialModel.qexp|.
The theorems \lstinline|PolynomialModel.qexp_D| and \lstinline|PolynomialModel.qexp_serreD| show that the two definitions of $D$ and $\partial_k$ are compatible under the $q$-expansion map.

\begin{lstlisting}[language=lean]
namespace PolynomialModel

def qexp : QM →ₐ[ℝ] PowerSeries ℝ :=
  aeval ![QExpansion.E₂, QExpansion.E₄, QExpansion.E₆]

theorem qexp_dGen (i : Fin 3) : qexp (dGen i) = QExpansion.D (qexp (X i)) := by ...

@[simp] theorem qexp_D (p : QM) : qexp (D p) = QExpansion.D (qexp p) := by ...

@[simp] theorem qexp_serreD (k : ℝ) (p : QM) :
    qexp (serreD k p) = QExpansion.serreD k (qexp p) := by ...
\end{lstlisting}

\subsubsection{Kaneko--Zagier operators}

We formalized Lemmas \ref{lem:KZ2_coeff}, \ref{lem:KZ3_coeff}, and \ref{lem:intertwine}.
The second- and third-order Kaneko--Zagier operators $L_{2,k}^{\alpha}$ and $L_{3,k}^{(\alpha,\beta)}$ on power series, defined in \eqref{eqn:KZ2_def} and \eqref{eqn:KZ3_def}, are formalized as \lstinline|KanekoZagier.L₂| and \lstinline|KanekoZagier.L₃|, respectively.

\begin{lstlisting}[language=lean]
open PowerSeries QExpansion

namespace KanekoZagier

noncomputable section

def L₂ (k α : ℝ) (f : ℝ⟦X⟧) : ℝ⟦X⟧ :=
  D (D f) - ((k + 1) / 6) • (E₂ * D f) + (k * (k + 1) / 12) • (D E₂ * f) + α • (E₄ * f)

def L₃ (k α β : ℝ) (f : ℝ⟦X⟧) : ℝ⟦X⟧ :=
  D (D (D f)) - ((k + 2) / 4) • (E₂ * D (D f))
    + (((k + 1) * (k + 2) / 4) • D E₂ + α • E₄) * D f
    - ((k * (k + 1) * (k + 2) / 24) • D (D E₂) + (k * α / 4) • D E₄ - β • E₆) * f
\end{lstlisting}

The theorems \lstinline|KanekoZagier.coeff_L₂| and \lstinline|KanekoZagier.coeff_L₃| formalize Lemmas \ref{lem:KZ2_coeff} and \ref{lem:KZ3_coeff}, respectively; they compute the coefficients of $L_{2,k}^{\alpha}G$ and $L_{3,k}^{(\alpha,\beta)}G$ in terms of the coefficients of $G$.

\begin{lstlisting}[language=lean]
open ArithmeticFunction Finset PowerSeries QExpansion
open scoped sigma

namespace KanekoZagier

noncomputable section

def κ₂ (k α : ℝ) (n : ℕ) : ℝ := (n : ℝ) ^ 2 - (k + 1) / 6 * n + α

def K₂ (k α : ℝ) (n j : ℕ) : ℝ :=
  2 * (k + 1) * (2 * (j : ℝ) - k * ((n : ℝ) - (j : ℝ))) * (σ 1 (n - j) : ℝ)
    + 240 * α * (σ 3 (n - j) : ℝ)

theorem coeff_L₂ (k α : ℝ) (G : ℝ⟦X⟧) (n : ℕ) :
    coeff n (L₂ k α G) = κ₂ k α n * coeff n G + ∑ j ∈ range n, K₂ k α n j * coeff j G := by ...

def κ₃ (k α β : ℝ) (n : ℕ) : ℝ := (n : ℝ) ^ 3 - (k + 2) / 4 * (n : ℝ) ^ 2 + α * n + β

def K₃ (k α β : ℝ) (n j : ℕ) : ℝ :=
  (k + 2) * (6 * (j : ℝ) ^ 2 - 6 * (k + 1) * ((n : ℝ) - (j : ℝ)) * (j : ℝ)
      + k * (k + 1) * ((n : ℝ) - (j : ℝ)) ^ 2) * (σ 1 (n - j) : ℝ)
    + 60 * α * (4 * (j : ℝ) - k * ((n : ℝ) - (j : ℝ))) * (σ 3 (n - j) : ℝ)
    - 504 * β * (σ 5 (n - j) : ℝ)

theorem coeff_L₃ (k α β : ℝ) (G : ℝ⟦X⟧) (n : ℕ) :
    coeff n (L₃ k α β G)
      = κ₃ k α β n * coeff n G + ∑ j ∈ range n, K₃ k α β n j * coeff j G := by ...
\end{lstlisting}

Finally, the theorem \lstinline|KanekoZagier.L₃_comp_L₂_eq_L₂_comp_L₃| formalizes the intertwining relation \eqref{eqn:intertwine} under the parameter constraints \eqref{eqn:intertwine_constraints}.

\begin{lstlisting}[language=lean]
namespace KanekoZagier

open PowerSeries QExpansion

noncomputable section

def shiftA (k α : ℝ) : ℝ := α - (3 * k ^ 2 + 36 * k + 104) / 144

def shiftB (k α β : ℝ) : ℝ := β + (k + 4) / 12 * α - (k + 4) ^ 2 * (k + 7) / 864

def shiftC (k γ : ℝ) : ℝ := γ - k * (k + 2) / 144

def shiftA' (k α' : ℝ) : ℝ := α' - (3 * k ^ 2 + 12 * k + 8) / 144

def shiftB' (k α' β' : ℝ) : ℝ := β' + k / 12 * α' - k ^ 2 * (k + 3) / 864

def shiftC' (k γ' : ℝ) : ℝ := γ' - (k + 6) * (k + 8) / 144

theorem L₃_comp_L₂_eq_L₂_comp_L₃ (k α β γ α' β' γ' : ℝ)
    (h₁ : shiftA k α + shiftC k γ = shiftA' k α' + shiftC' k γ')
    (h₂ : shiftB k α β - shiftC k γ = shiftB' k α' β' - 2 * shiftA' k α' / 3)
    (h₃ : shiftC k γ * (shiftA k α + 1 / 2)
      = shiftA' k α' * (shiftC' k γ' + 1 / 6) - shiftB' k α' β')
    (h₄ : shiftC k γ * (shiftB k α β - shiftA k α / 3 - 1 / 9)
      = shiftB' k α' β' * (shiftC' k γ' + 1 / 3))
    (f : ℝ⟦X⟧) :
    L₃ (k + 4) α β (L₂ k γ f) = L₂ (k + 6) γ' (L₃ k α' β' f) := by ...
\end{lstlisting}

The proofs of Lemmas \ref{lem:KZ2_coeff}, \ref{lem:KZ3_coeff}, and \ref{lem:intertwine} are purely algebraic; the formal proofs are handled largely by the tactics \lstinline|simp|, \lstinline|ring|, \lstinline|ring_nf|, and \lstinline|module|.

\subsubsection{Log-polynomial inequalities}

We also formalized the log-polynomial inequalities \eqref{eqn:logineq1}, \eqref{eqn:logineqY4}, \eqref{eqn:logineqY8}, and \eqref{eqn:logineqY10}, following the arguments used in the proofs of Lemma \ref{lem:logineq1} and Theorem \ref{thm:Ywpos}.

\begin{lstlisting}[language=lean]
theorem logIneq_basic {x : ℝ} (hx : 0 < x) : 2 * x / (x + 2) < log (1 + x) := by ...

theorem logIneq_Y₄ {x : ℝ} (hx : 0 < x) :
    x * (x + 2) / (2 * (x ^ 2 + x + 1)) < log (1 + x) := by ...

theorem logIneq_Y₈ {x : ℝ} (hx : 0 < x) :
    (11 * x ^ 4 + 28 * x ^ 3 + 18 * x ^ 2 + 12 * x) / 12 < (x ^ 2 + x + 1) ^ 2 * log (1 + x) := by ...

theorem logIneq_Y₁₀ {x : ℝ} (hx : 1 ≤ x) :
    (2 * x ^ 5 + 11 * x ^ 4 - x ^ 3 - 24 * x ^ 2 - 12 * x) / 6
      < -((x ^ 2 + x + 1) * (2 * x + 1) * (x + 2) * (1 - x)) * log (1 + x) := by ...
\end{lstlisting}

\subsubsection{Positivity of the coefficients of $\wtilde{F}_{w-2}$ and $\wtilde{G}_{w}$}

We first defined $F_w$ and $\wtilde{F}_{w-2}$ for $w \equiv 0 \pmod{4}$ as elements of \lstinline|QM| by defining $F_w$ recursively from $F_8$ using \eqref{eqn:poseig4}, and then setting $\wtilde{F}_{w-2}=\delta F_w$.
Within the namespace \lstinline|UncertaintyPrinciple|, \lstinline|fFam N| corresponds to $F_{4N}$, and \lstinline|ftildeFam N| corresponds to $\wtilde{F}_{4N-2}$.
The theorems \lstinline|coeff_ftildeSeries_pos| and \lstinline|coeff_ftildeSeries_boundary| formalize Proposition \ref{prop:Ftilde_pos}.
For convenience, we set $F_0$ and $F_4$ to zero; neither is used in the proof of Proposition \ref{prop:Ftilde_pos}.

\begin{lstlisting}[language=lean]
open Finset PowerSeries

namespace UncertaintyPrinciple

noncomputable section

section QExpansion

open ArithmeticFunction QExpansion PolynomialModel KanekoZagier
open scoped sigma

def cF (w : ℝ) : ℝ := 3 * (w - 4) * w / (16 * (w - 10) * (w - 5) * (w - 3) * (w + 2))

def SFp (w : ℝ) (G : QM) : QM :=
  ((w - 6) * (w - 5) / 36 : ℝ) • (E₄ * G) - serreD w (serreD (w - 2) G)

def F₈ : QM := (1 / 1728 : ℝ) • (E₂ ^ 2 * E₄ - (2 : ℝ) • (E₂ * E₆) + E₄ ^ 2)

def fFam : ℕ → QM
  | 0 => 0
  | 1 => 0
  | 2 => F₈
  | N + 3 => cF (4 * N + 8) • SFp (4 * N + 8) (fFam (N + 2))

def ftildeFam (N : ℕ) : QM := delta (fFam N)

def ftildeSeries (N : ℕ) : ℝ⟦X⟧ := qexp (ftildeFam N)

theorem coeff_ftildeSeries_pos (N j : ℕ) (hN : 2 ≤ N) (hj1 : 1 ≤ j) (hj : j ≤ N - 2) :
    0 < coeff j (ftildeSeries N) := by ...

theorem coeff_ftildeSeries_boundary (N : ℕ) (hN : 3 ≤ N) :
    1 / 360 ≤ coeff (N - 2) (ftildeSeries N) := by ...
\end{lstlisting}

For the $(-1)^{d/4+1}$-family, we formalized only $\wtilde{G}_w$ for $w \equiv 0 \pmod{4}$ as elements of \lstinline|QM|, defining the family recursively from $\wtilde{G}_0$ using \eqref{eqn:Gtilde_rec}.
Formalizing $G_w$ would require more work because it involves Jacobi theta functions and the logarithm of the modular lambda function.
The theorem \lstinline|coeff_gtildeSeries_nonneg| formalizes Propositions \ref{prop:Gtilde_pos} and \ref{prop:Gtilde_pos_boundary}, while \lstinline|coeff_gtildeSeries_zero_pos| establishes the additional strict positivity of the constant coefficient.

\begin{lstlisting}[language=lean]
open Finset PowerSeries

namespace UncertaintyPrinciple

noncomputable section

section QExpansion

open ArithmeticFunction QExpansion KanekoZagier
open scoped sigma

def cG (w : ℝ) : ℝ := 3 * (w + 10) * (w + 14) / (16 * (w + 4) * (w + 9) * (w + 11) * (w + 16))

def SGp (w : ℝ) (G : QM) : QM :=
  ((w + 8) * (w + 9) / 36 : ℝ) • (E₄ * G) - serreD (w + 2) (serreD w G)

def gtildeFam : ℕ → QM
  | 0 => (3 / 14336 : ℝ) • 1
  | N + 1 => cG (4 * N) • SGp (4 * N) (gtildeFam N)

def gtildeSeries (N : ℕ) : ℝ⟦X⟧ := qexp (gtildeFam N)

theorem coeff_gtildeSeries_nonneg (N : ℕ) : ∀ n, n ≤ N + 1 → 0 ≤ coeff n (gtildeSeries N) := by ...

theorem coeff_gtildeSeries_zero_pos (N : ℕ) : 0 < coeff 0 (gtildeSeries N) := by ...
\end{lstlisting}

\subsection{Sage}
\label{subsec:sage}

We also implemented in Sage several computations involving quasimodular forms, building on code developed by the author for \cite[Appendix~A]{lee2024algebraic}.
In particular, we implemented the families $F_w$, $\wtilde{F}_{w-2}$, $G_w$, $Y_w$, and $\wtilde{G}_w$, together with the Kaneko--Zagier operators $L_{2,k}^{\alpha}$ and $L_{3,k}^{(\alpha,\beta)}$.
We then verified, up to weight $w \le 100$, that these families satisfy the corresponding recurrence relations and MLDEs.
Some of the results are formalized in Lean as described in Appendix \ref{subsec:lean}, where you can consider the Sage code as an independent sanity check of the results.

To implement $G_w$ and $Y_w$, we defined \texttt{QM2} as a polynomial ring with three generators corresponding to $H_2$, $H_4$, and $E_2$, and then set \texttt{QM2\_LS = QM2['LS']}, where the new generator \texttt{LS} represents $\cL_S$.
In this extended ring, we implemented $q$-expansions and ordinary and Serre derivatives using \eqref{eqn:LS_qexp} and \eqref{eqn:LS_derivative}.
The implementation is contained in the Jupyter notebook \texttt{uncertainty\_principle.ipynb} and the Sage files under the directory \texttt{posqmf/sage} in the same GitHub repository.

\bibliographystyle{plain}
\bibliography{refs}
\end{document}